\newtheorem{defn}{Definition}
\newtheorem{prop}{Proposition}
\newtheorem{lem}{Lemma}
\newtheorem{thm}{Theorem}
\newcommand{\Z}{\mathbb{Z}}
\renewcommand{\S}{\mathbb{S}}
\newcommand{\R}{\mathbb{R}}
\newcommand{\C}{\mathbb{C}}
\newcommand{\CC}{\widehat{\mathbb{C}}}
\DeclareMathOperator{\Sym}{\mathrm{Sym}}
\DeclareMathOperator{\Aut}{\mathrm{Aut}}
\DeclareMathOperator{\IMG}{\mathrm{IMG}}
\DeclareMathOperator{\Ker}{\mathrm{Ker}}
\DeclareMathOperator{\Id}{\mathrm{Id}}
\def\recursion[#1]{{\langle\!\langle}#1{\rangle\!\rangle}}
\def\bigrecursion[#1]{{\Big\langle\!\Big\langle}#1{\Big\rangle\!\Big\rangle}}
\DeclareMathOperator{\Mating}{\text{\raisebox{-2pt}{\rotatebox{90}{$\models$}}}}
\title{Introduction to Iterated Monodromy Groups}
\author{Sébastien Godillon}
\date{March 24, 2012}
\begin{document}


\maketitle

\begin{abstract}
The theory of iterated monodromy groups was developed by Nekrashevych \cite{SelfSimilarGroups}. It is a wonderful example of application of group theory in dynamical systems and, in particular, in holomorphic dynamics. Iterated monodromy groups encode in a computationally efficient way combinatorial information about any dynamical system induced by a post-critically finite branched covering. Their power was illustrated by a solution of the Hubbard Twisted Rabbit Problem given by Bartholdi and Nekrashevych \cite{BartholdiNekrashevych}.

These notes attempt to introduce this theory for those who are familiar with holomorphic dynamics but not with group theory. The aims are to give all explanations needed to understand the main definition (Definition \ref{DefIteratedMonodromyGroup}) and to provide skills in computing any iterated monodromy group efficiently (see examples in Section \ref{SecExamples}). Moreover some explicit links between iterated monodromy groups and holomorphic dynamics are detailed. In particular, Section \ref{SecCombinatorialInvariance} provides some facts about combinatorial equivalence classes, and Section \ref{SecMatings} deals with matings of polynomials.
\end{abstract}

\tableofcontents

\newpage

\begin{quote}
Representations of groups, automata, bimodules and virtual endomorphisms are intentionally omitted in order to make this introduction more elementary. All the proofs are mainly based on the path and homotopy lifting properties (Proposition \ref{PropLift}) from algebraic topology. For further reading see \cite{PilgrimBook}, \cite{SelfSimilarGroups} and \cite{SelfSimilarity}.

These notes come from lectures given in the Chinese Academy of Sciences in Beijing in September 2011. The way to introduce iterated monodromy groups by using two trees (one left to label the vertices and one right for the edges, see Section \ref{SecTreePreimages}) was explained to the author by Tan Lei although it implicitly appears in \cite{SelfSimilarGroups} and some others works. The author would like very much to thank Laurent Bartholdi for his fruitful discussion and patience in explaining his works, the referee for his helpful comments and relevant suggestions and Tan Lei for her support and encouragement.
\end{quote}

\newpage

\section{Preliminaries}\label{SecPreliminaries}

\subsection{Tree automorphism}\label{SecTreeAutomorphism}

A tree $T$ is a (simple undirected) graph which is connected and has no cycles. More precisely, a tree $T=(V,E)$ is the data of a set of vertices $V$ and a set of edges $E$ which are pairs of two distinct vertices, such that for any two distinct vertices $v,v'$ there is a unique path of edges from $v$ to $v'$. For every edge $\{v,v'\}$, the vertices $v$ and $v'$ are said to be adjacent which is denoted $v\stackrel{T}{\sim}v'$ (being adjacent is a symmetric binary relation).

A tree $T$ is said rooted if one vertex $t\in V$ has been designated the root. In this case, one can write the set of all vertices as the following partition $V=\bigsqcup_{n\geqslant 0} V^{n}$ where $V^{n}$ is the set of all the vertices linked to the root $t$ by a path of exactly $n$ edges (and $V^{0}=\{t\}$). Each $V^{n}$ is called the set of all vertices of level $n$.

\begin{defn}
Two rooted trees $T=(V,E)$ and $\widehat{T}=(\widehat{V},\widehat{E})$ are said to be isomorphic if there is a bijection $\varphi$ from $V=\bigsqcup_{n\geqslant 0} V^{n}$ onto $\widehat{V}=\bigsqcup_{n\geqslant 0} \widehat{V}^{n}$ satisfying the following two axioms
\begin{description}
\item[Level preserving:] $\forall n\geqslant 0,\ \varphi(V^{n})=\widehat{V}^{n}$
\item[Edge preserving:] $\forall v,v'\in V,\ v\stackrel{T}{\sim}v'\Rightarrow\varphi(v)\stackrel{\widehat{T}}{\sim}\varphi(v')$
\end{description}
Such a bijection $\varphi$ is called a tree isomorphism. A tree automorphism is a tree isomorphism from a rooted tree $T$ onto itself.

The set of all tree automorphisms of $T$ is denoted by $\Aut(T)$ and it is equipped with the group structure coming from composition of maps. For every pair of tree automorphisms $g,h$ in $\Aut(T)$, their composition is denoted by $g.h$ where the map $g$ is performed first (this notation is more convenient for computations in $\Aut(T)$ than $h\circ g$).
\end{defn}

Given the alphabet $\mathcal{E}=\{0,1,\dots,d-1\}$ of $d\geqslant2$ letters, consider the following sets of words
\begin{itemize}
\item $\mathcal{E}^{0}=\{\emptyset\}$
\item $\forall n\geqslant 1,\ \mathcal{E}^{n}=\{\text{words of length }n\text{ with letters in }\mathcal{E}\}=\{\varepsilon_{1}\varepsilon_{2}\dots\varepsilon_{n}\ /\ \forall k,\ \varepsilon_{k}\in \mathcal{E}\}$
\item $\mathcal{E}^{\star}=\bigsqcup_{n\geqslant 0} \mathcal{E}^{n}$
\end{itemize}

\begin{defn}
The regular rooted tree $T_{d}$ is defined as follows
\begin{description}
\vspace{-5pt}
\item[Root:] the empty word $\emptyset$
\vspace{-5pt}
\item[Vertices:] the set of words $\mathcal{E}^{\star}=\bigsqcup_{n\geqslant 0} \mathcal{E}^{n}$
\vspace{-5pt}
\item[Edges:] all the pairs $\{w,w\varepsilon\}$ where $w$ is a word in $\mathcal{E}^{\star}$ and $\varepsilon$ is a letter in $\mathcal{E}$
\vspace{-5pt}
\end{description}
\end{defn}
The graph below shows the first three levels of the regular rooted tree $T_{2}$.
$$\xymatrix @!0 @R=0.2pc @C=6pc {
&&&& \\
&&& 000\ar@{-}[ru]\ar@{-}[rd] & \\
&&&& \\
&& 00\ar@{-}[ruu]\ar@{-}[rdd] && \\
&&&& \\
&&& 001\ar@{-}[ru]\ar@{-}[rd] & \\
&&&& \\
& 0 \ar@{-}[ruuuu]\ar@{-}[rdddd] &&& \\
&&&& \\
&&& 010\ar@{-}[ru]\ar@{-}[rd] & \\
&&&& \\
&& 01\ar@{-}[ruu]\ar@{-}[rdd] && \\
&&&& \\
&&& 011\ar@{-}[ru]\ar@{-}[rd] & \\
&&&& \\
\emptyset \ar@{-}[ruuuuuuuu]\ar@{-}[rdddddddd] &&&& \hspace{30pt}\dots \\
&&&& \\
&&& 100\ar@{-}[ru]\ar@{-}[rd] & \\
&&&& \\
&& 10\ar@{-}[ruu]\ar@{-}[rdd] && \\
&&&& \\
&&& 101\ar@{-}[ru]\ar@{-}[rd] & \\
&&&& \\
& 1 \ar@{-}[ruuuu]\ar@{-}[rdddd] &&& \\
&&&& \\
&&& 110\ar@{-}[ru]\ar@{-}[rd] & \\
&&&& \\
&& 11\ar@{-}[ruu]\ar@{-}[rdd] && \\
&&&& \\
&&& 111\ar@{-}[ru]\ar@{-}[rd] & \\
&&&&
}$$

The regular rooted tree $T_{d}$ is an example of self-similar object. Namely for every word $v\in \mathcal{E}^{\star}$, the map $\varphi_{v}:\mathcal{E}^{\star}\rightarrow v\mathcal{E}^{\star},\ w\mapsto vw$ is a tree isomorphism from $T_{d}$ onto the regular subtree $T_{d}|_{v}$ rooted at $v$.

For any tree automorphism $g\in\Aut(T_{d})$ and any word $v\in\mathcal{E}^{\star}$, notice that the following map
$$\mathcal{R}_{v}(g)=(\varphi_{g(v)})^{-1}\circ g|_{v\mathcal{E}^{\star}}\circ\varphi_{v}$$
is well defined from $\mathcal{E}^{\star}$ onto itself since the restriction $g|_{v\mathcal{E}^{\star}}:v\mathcal{E}^{\star}\rightarrow g(v)\mathcal{E}^{\star}$ is a tree isomorphism from the regular subtree $T_{d}|_{v}$ rooted at $v$ onto the regular subtree $T_{d}|_{g(v)}$ rooted at $g(v)$. Actually $\mathcal{R}_{v}(g)$ defines a tree automorphism of $T_{d}$ as it is shown in the commutative diagram below.
$$\xymatrix{ T_{d} \ar[rr]^{\textstyle \mathcal{R}_{v}(g)} \ar[d]_{\textstyle \varphi_{v}} && T_{d} \ar[d]^{\textstyle \varphi_{g(v)}} \\ T_{d}|_{v} \ar[rr]_{\textstyle g|_{v\mathcal{E}^{\star}}} && T_{d}|_{g(v)} }$$

\begin{defn}\label{DefSelfSimilar}
For any tree automorphism $g\in\Aut(T_{d})$ and any word $v\in \mathcal{E}^{\star}$, the following tree automorphism
$$\mathcal{R}_{v}(g)=(\varphi_{g(v)})^{-1}\circ g|_{v\mathcal{E}^{\star}}\circ\varphi_{v}$$
is called the renormalization of $g$ at $v$.

A subgroup $G$ of $\Aut(T_{d})$ is said to be self-similar if the following condition holds
$$\forall g\in G,\ \forall v\in \mathcal{E}^{\star},\ \mathcal{R}_{v}(g)\in G$$
\end{defn}
Since $\varphi_{v}\circ\varphi_{v'}=\varphi_{vv'}$ for every pair of words $v,v'$ in $\mathcal{E}^{\star}$, it follows that $\mathcal{R}_{v'}\left(\mathcal{R}_{v}(g)\right)=\mathcal{R}_{vv'}(g)$ for every tree automorphism $g\in\Aut(T_{d})$. Therefore a quick induction shows that one only needs to check the condition above for words $v\in\mathcal{E}^{\star}$ of length $1$. That is
$$G\text{ is self-similar}\Longleftrightarrow\forall g\in G,\ \forall \varepsilon\in \mathcal{E},\ \mathcal{R}_{\varepsilon}(g)\in G$$

\begin{example}\quad\\
One can remark that any tree automorphism $g\in\Aut(T_{d})$ induces a permutation $g|_{\mathcal{E}^{1}}$ of $\mathcal{E}^{1}=\mathcal{E}$. Although it is not a one-to-one correspondence, one can conversely define a tree automorphism $g_{\sigma}\in\Aut(T_{d})$ from a given permutation $\sigma\in\Sym(\mathcal{E})$  by $g_{\sigma}:\mathcal{E}^{\star}\rightarrow \mathcal{E}^{\star},\ \varepsilon_{1}\varepsilon_{2}\dots\varepsilon_{n}\mapsto\sigma(\varepsilon_{1})\sigma(\varepsilon_{2})\dots\sigma(\varepsilon_{n})$. Such a tree automorphism satisfies $g_{\sigma}(ww')=g_{\sigma}(w)g_{\sigma}(w')$ for every pair of words $w,w'$ in $\mathcal{E}^{\star}$. Therefore every renormalization of $g_{\sigma}$ is equal to $g_{\sigma}$ and any subgroup of $\Aut(T_{d})$ generated by such tree automorphisms induced by some permutations of $\mathcal{E}$ is self-similar.
\end{example}

\noindent Remark that every tree automorphism $g\in\Aut(T_{d})$ satisfies
$$\forall\varepsilon\in\mathcal{E},\forall w\in\mathcal{E}^{\star},\ g(\varepsilon w)=\Big(g|_{\mathcal{E}^{1}}(\varepsilon)\Big)\Big(\mathcal{R}_{\varepsilon}(g)(w)\Big)$$
Consequently any tree automorphism is entirely described by its renormalizations at every vertex in the first level together with its restriction on the first level which describes how the regular subtrees rooted at every vertex in the first level are interchanged. That provides a convenient way to encode tree automorphisms in order to make computations in $\Aut(T_{d})$.

\newpage

\begin{defn}\label{DefWreathRecursion}
Every tree automorphism $g\in\Aut(T_{d})$ may be uniquely written as follows
$$g=\sigma_{g}\recursion[g_{0},g_{1},\dots,g_{d-1}]$$
where
\begin{itemize}
\item $\sigma_{g}=g|_{\mathcal{E}^{1}}\in\Sym(\mathcal{E})$ is called the root permutation of $g$
\item and for every letter $\varepsilon\in\mathcal{E}$, $g_{\varepsilon}=\mathcal{R}_{\varepsilon}(g)\in\Aut(T_{d})$ is the renormalization of $g$ at $\varepsilon$
\end{itemize}
This decomposition is called the wreath recursion of $g$.
\end{defn}
More precisely the map $g\mapsto\left(g|_{\mathcal{E}^{1}},\left(\mathcal{R}_{0}(g),\mathcal{R}_{1}(g),\dots,\mathcal{R}_{d-1}(g)\right)\right)$ is a group isomorphism from $\Aut(T_{d})$ onto the semi-direct product $\Sym(\mathcal{E})\ltimes(\Aut(T_{d}))^{d}$ called the permutational wreath product (its binary operation is described below). Remark that a subgroup $G$ of $\Aut(T_{d})$ is said self-similar if and only if its image under this group isomorphism is a subgroup of $\Sym(\mathcal{E})\ltimes G^{d}$.

It is more convenient to think wreath recursion as in the graph below.
$$\xymatrix @!0 @R=2pc @C=10pc { 0 \ar[r]^{\textstyle g_{0}} & \sigma_{g}(0) \\ 1 \ar[r]^{\textstyle g_{1}} & \sigma_{g}(1) \\ \dots & \dots \\ d-1 \ar[r]^{\textstyle g_{d-1}} & \sigma_{g}(d-1) }$$
Be aware that each arrow does not depict the map on its label. In fact, all the arrows describe the root permutation $\sigma_{g}$ whereas the labels correspond to the renormalizations of $g$. In practice the arrows are often ``tied'' to sort out the image on the right-hand side in the same order as on the left-hand side. The root permutation $\sigma_{g}$ is then described by intertwined arrows. Furthermore a label is often forgotten if the corresponding renormalization is the identity tree automorphism $\Id\in\Aut(T_{d})$.

This kind of graph provides an easy way to compute with wreath recursions. Namely for every pair of tree automorphisms $g,h$ in $\Aut(T_{d})$, one get
$$\xymatrix @!0 @R=2pc @C=10pc { 0 \ar[r]^{\textstyle g_{0}} & \sigma_{g}(0) \ar[r]^-{\textstyle h_{\sigma_{g}(0)}} & \sigma_{h}(\sigma_{g}(0)) \\ 1 \ar[r]^{\textstyle g_{1}} & \sigma_{g}(1) \ar[r]^-{\textstyle h_{\sigma_{g}(1)}} & \sigma_{h}(\sigma_{g}(1)) \\ \dots & \dots & \dots \\ d-1 \ar[r]^{\textstyle g_{d-1}} & \sigma_{g}(d-1) \ar[r]^-{\textstyle h_{\sigma_{g}(d-1)}} & \sigma_{h}(\sigma_{g}(d-1)) }$$

\begin{lem}\label{LemComputationsWreathRecursion}
The wreath recursion of a composition of two tree automorphism $g,h$ in $\Aut(T_{d})$ is given by
$$g.h=(\sigma_{h}\circ\sigma_{g})\bigrecursion[g_{0}.h_{\sigma_{g}(0)},g_{1}.h_{\sigma_{g}(1)},\dots,g_{d-1}.h_{\sigma_{g}(d-1)}]$$
In particular, the inverse wreath recursion of a tree automorphism $g$ in $\Aut(T_{d})$ is given by
$$g^{-1}=\sigma_{g}^{-1}\bigrecursion[g_{\sigma_{g}^{-1}(0)}^{-1},g_{\sigma_{g}^{-1}(1)}^{-1},\dots,g_{\sigma_{g}^{-1}(d-1)}^{-1}]$$
\end{lem}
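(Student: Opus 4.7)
The plan is to derive both formulas directly from the definitions of the root permutation and the renormalization, being careful that in this paper's convention $g.h$ means ``apply $g$ first, then $h$'', i.e.\ $g.h = h\circ g$ as maps on $\mathcal{E}^\star$.

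First I would compute the root permutation of $g.h$. Since $(g.h)|_{\mathcal{E}^1}(\varepsilon) = h(g(\varepsilon))$ and $g$ preserves levels, $g(\varepsilon) = \sigma_g(\varepsilon) \in \mathcal{E}^1$, so $(g.h)|_{\mathcal{E}^1}(\varepsilon) = h|_{\mathcal{E}^1}(\sigma_g(\varepsilon)) = \sigma_h(\sigma_g(\varepsilon))$, giving $\sigma_{g.h} = \sigma_h \circ \sigma_g$.

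Next I would compute $\mathcal{R}_\varepsilon(g.h)$ for every $\varepsilon \in \mathcal{E}$. Applying the definition:
$$\mathcal{R}_\varepsilon(g.h) = \varphi_{(g.h)(\varepsilon)}^{-1} \circ (g.h)|_{\varepsilon\mathcal{E}^\star} \circ \varphi_\varepsilon.$$
Since $g$ sends $\varepsilon\mathcal{E}^\star$ onto $\sigma_g(\varepsilon)\mathcal{E}^\star$ and $h$ sends $\sigma_g(\varepsilon)\mathcal{E}^\star$ onto $\sigma_h(\sigma_g(\varepsilon))\mathcal{E}^\star$, one has $(g.h)|_{\varepsilon\mathcal{E}^\star} = h|_{\sigma_g(\varepsilon)\mathcal{E}^\star} \circ g|_{\varepsilon\mathcal{E}^\star}$. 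Inserting the identity $\varphi_{\sigma_g(\varepsilon)} \circ \varphi_{\sigma_g(\varepsilon)}^{-1}$ between these two factors splits the expression as
$$\mathcal{R}_\varepsilon(g.h) = \Big( \varphi_{\sigma_h(\sigma_g(\varepsilon))}^{-1} \circ h|_{\sigma_g(\varepsilon)\mathcal{E}^\star} \circ \varphi_{\sigma_g(\varepsilon)} \Big) \circ \Big( \varphi_{\sigma_g(\varepsilon)}^{-1} \circ g|_{\varepsilon\mathcal{E}^\star} \circ \varphi_\varepsilon \Big) = \mathcal{R}_{\sigma_g(\varepsilon)}(h) \circ \mathcal{R}_\varepsilon(g),$$
which in the paper's notation reads $g_\varepsilon . h_{\sigma_g(\varepsilon)}$. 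Combined with the formula for $\sigma_{g.h}$, this gives the announced wreath recursion for $g.h$.

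Finally I would derive the inverse formula by specializing $h = g^{-1}$. Since $g.g^{-1} = \Id = \Id \recursion[\Id,\dots,\Id]$, comparing root permutations yields $\sigma_{g^{-1}} = \sigma_g^{-1}$, and comparing renormalizations yields $g_\varepsilon . (g^{-1})_{\sigma_g(\varepsilon)} = \Id$, hence $(g^{-1})_{\sigma_g(\varepsilon)} = g_\varepsilon^{-1}$ for every $\varepsilon$; substituting $\varepsilon = \sigma_g^{-1}(\varepsilon')$ gives $(g^{-1})_{\varepsilon'} = g_{\sigma_g^{-1}(\varepsilon')}^{-1}$, as claimed. The only delicate point is keeping straight the reversed composition order $g.h = h\circ g$; once that is handled correctly, everything reduces to the identity $\varphi_v \circ \varphi_{v'} = \varphi_{vv'}$ and the definition of renormalization.
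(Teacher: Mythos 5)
Your proof is correct, and it is essentially the rigorous, written-out version of the argument the paper relies on: the paper states this lemma without a formal proof, justifying it only by the preceding arrow-composition diagram, whereas you verify the same computation directly from the definitions. Your handling of the reversed composition order $g.h=h\circ g$, the identification $\sigma_{g.h}=\sigma_h\circ\sigma_g$, the insertion of $\varphi_{\sigma_g(\varepsilon)}\circ\varphi_{\sigma_g(\varepsilon)}^{-1}$ to split $\mathcal{R}_\varepsilon(g.h)$ into $\mathcal{R}_{\sigma_g(\varepsilon)}(h)\circ\mathcal{R}_\varepsilon(g)=g_\varepsilon.h_{\sigma_g(\varepsilon)}$, and the specialization $h=g^{-1}$ are all accurate, so nothing is missing.
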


\newpage

\begin{example}[Example - the adding machine]\quad\\
Every word in $\mathcal{E}^{\star}$ may be thought as a $d$-ary integer whose digits are written from left to right. Let $g\in\Aut(T_{d})$ be the adding machine on $\mathcal{E}^{\star}$, namely the process of adding one to the left most digit of every $d$-ary integer (with the convention that adding one to the word $11\dots 1\in\mathcal{E}^{n}$ gives the word $00\dots 0\in\mathcal{E}^{n}$ of same length). More precisely, the adding machine $g$ is recursively defined by
$$\forall\varepsilon\in\mathcal{E},\forall w\in\mathcal{E}^{\star},\ g(\varepsilon w)=\left\{\begin{array}{ll} (\varepsilon+1)w & \text{if }\varepsilon\in\{0,1,\dots,d-2\} \\ 0g(w) & \text{if }\varepsilon=d-1 \end{array}\right.$$
Then $g$ may be seen as the wreath recursion $g=\sigma\recursion[\Id,\Id,\dots,\Id,g]$ where $\sigma$ is the cyclic permutation such that $\sigma(\varepsilon)=\varepsilon+1$ if $\varepsilon\in\{0,1,\dots,d-2\}$ and $\sigma(d-1)=0$, namely $\sigma=(0,1,\dots,d-1)$ (using circular notation).
$$\xymatrix @!0 @R=1.7pc @C=6pc { 0 \ar[rd] & 0 \\ 1 \ar[rd] & 1 \\ \dots & 2 \\ d-2 \ar[rd] & \dots \\ d-1 \ar@(r,l)[ruuuu]^{\textstyle g} & d-1 }$$
Lemma \ref{LemComputationsWreathRecursion} allows to compute easily the inverse wreath recursion $g^{-1}=\sigma^{-1}\recursion[g^{-1},\Id,\dots,\Id]$ since
$$\xymatrix @!0 @R=1.7pc @C=6pc { 0 \ar[rd] & 0 \ar@(r,l)[rdddd]^{\textstyle g^{-1}} & 0 \\ 1 \ar[rd] & 1 \ar[ru] & 1 \\ \dots & 2 \ar[ru] & \dots \\ d-2 \ar[rd] & \dots & d-2 \\ d-1 \ar@(r,l)[ruuuu]^{\textstyle g} & d-1 \ar[ru] & d-1}$$
gives after ``untying''
$$\xymatrix @!0 @R=1.7pc @C=12pc { 0 \ar[r] & 0 \\ 1 \ar[r] & 1 \\ \dots & \dots \\ d-2 \ar[r] & d-2 \\ d-1 \ar[r]^{\textstyle g.g^{-1}=\Id} & d-1 }$$

A similar computation gives $g^{d}=\recursion[g,g,\dots,g]$, and thus a quick induction shows that the adding machine acts as a cyclic permutation of order $d^{n}$ on the $n$-th level of $T_{d}$ for every $n\geqslant 1$.
\end{example}

\begin{example}[Example - Hanoi Towers group (due to Grigorchuk and \u{S}uni\'{c} \cite{SelfSimilarity}))]\quad\\
The popular Towers of Hanoi Problem deals with three rods and a given number $n$ of disks of different sizes which can slide onto every rod. The problem starts with all the disks in ascending order of size on one rod making a conical shape (see Figure \ref{FigStartHanoi}), and consists to move the entire stack to another rod with respect to the following rules
\begin{enumerate}
\item Only one disk may be moved at a time.
\item Each move consists of taking the upper disk from one of the rods and sliding it onto another rod, on top of the other disks that may already be present on that rod.
\item No disk may be placed on top of a smaller disk.
\end{enumerate}

\begin{figure}[!h]
\begin{center}
\includegraphics[width=10cm]{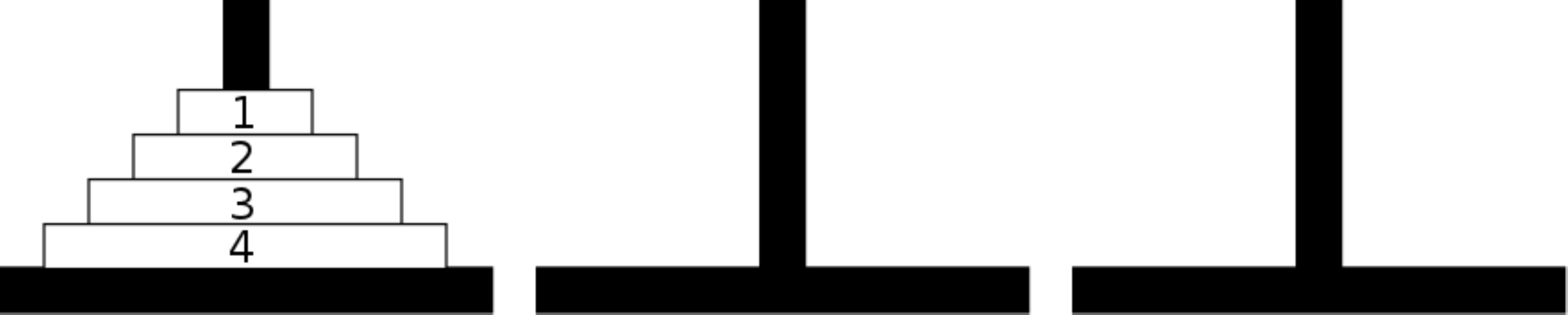}\\
\caption{The starting configuration 0000 for the Hanoi Towers with four disks}\label{FigStartHanoi}
\end{center}
\end{figure}

If the $n$ disks $1,2,\dots,n$ are labeled with their size ($1$ being the smallest and $n$ the largest) and each rod is labeled with one letter from the alphabet $\mathcal{E}=\{0,1,2\}$, then every word $w=\varepsilon_{1}\varepsilon_{2}\dots\varepsilon_{n}\in\mathcal{E}^{n}$ of length $n$ encodes a unique configuration of the problem in which the $k$-th disk is placed on the rod $\varepsilon_{k}$ (and then the order of disks on any rod is determined by their size). Figure \ref{FigStartHanoi} and Figure \ref{FigExHanoi} depict the starting configuration $0000\in\mathcal{E}^{4}$ and the configuration $1210\in\mathcal{E}^{4}$ for $n=4$ disks.

\begin{figure}[!h]
\begin{center}
\includegraphics[width=10cm]{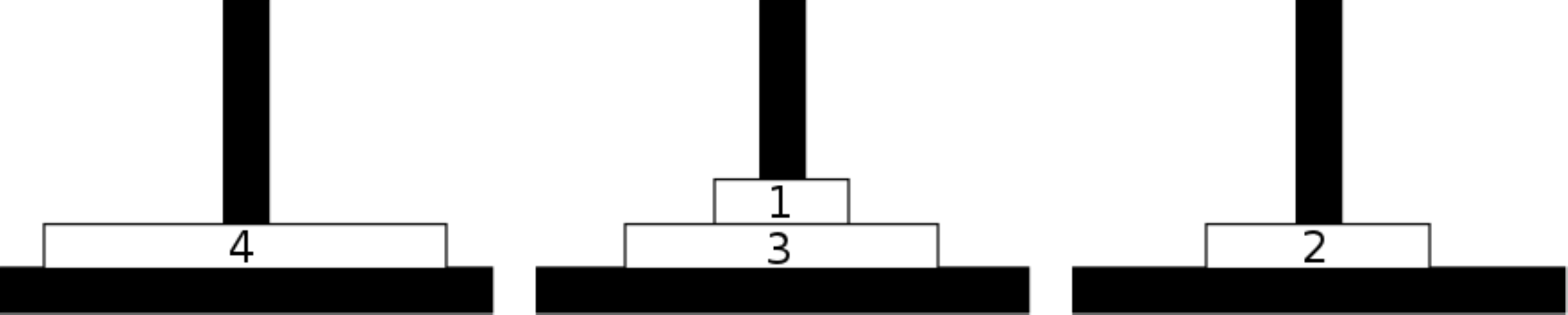}\\
\caption{The configuration 1210 for the Hanoi Towers with four disks}\label{FigExHanoi}
\end{center}
\end{figure}

It turns out that each move between two rods is represented by either of the following wreath recursions

\begin{center}
\begin{tabular}{|c|c|c|}
\hline 
$a=(1,2)\recursion[a,\Id,\Id]$ & $b=(0,1)\recursion[\Id,\Id,b]$ & $c=(0,2)\recursion[\Id,c,\Id]$ \\ 
for a move & for a move & for a move \\
between rods $1$ and $2$ & between rods $0$ and $1$ & between rods $0$ and $2$ \\
\hline
$\xymatrix @!0 @R=2pc @C=5pc { 0 \ar[r]^{\textstyle a} & 0 \\ 1 \ar[rd] & 1 \\ 2 \ar[ru] & 2}$ & $\xymatrix @!0 @R=2pc @C=5pc { 0 \ar[rd] & 0 \\ 1 \ar[ru] & 1 \\ 2 \ar[r]^{\textstyle b} & 2 }$ & $\xymatrix @!0 @R=2pc @C=5pc { 0 \ar@(r,l)[rdd] & 0 \\ 1 \ar[r]^(0.25){\textstyle c } & 1 \\ 2 \ar@(r,l)[ruu] & 2 }$ \\ 
\hline
\end{tabular}
\end{center}

For instance, one can go from the starting position 0000 in Figure \ref{FigStartHanoi} to the position 1210 in Figure \ref{FigExHanoi} by the following sequence of basic moves between two rods
$$b\rightarrow c\rightarrow a\rightarrow b\rightarrow a$$
In terms of wreath recursions, that gives
$$\xymatrix @!0 @R=2pc @C=5pc { 0 \ar[rd] & 0 \ar@(r,l)[rdd] & 0 \ar[r]^{\textstyle a} & 0 \ar[rd] & 0 \ar[r]^{\textstyle a} & 0 \\ 1 \ar[ru] & 1 \ar[r]^(0.25){\textstyle c } & 1 \ar[rd] & 1 \ar[ru] & 1 \ar[rd] & 1 \\ 2 \ar[r]^{\textstyle b} & 2 \ar@(r,l)[ruu] & 2 \ar[ru] & 2 \ar[r]^{\textstyle b} & 2 \ar[ru] & 2 }$$
or shortly after ``untying'': $b.c.a.b.a=(0,1)\recursion[c.b,a,b.a]$
$$\xymatrix @!0 @R=2pc @C=25pc { 0 \ar[rd]^(0.2){\textstyle c.b} & 0 \\ 1 \ar[ru]_(0.2){\textstyle a} & 1 \\ 2 \ar[r]^{\textstyle b.a} & 2 }$$

The Hanoi Towers group $\mathcal{H}$ is defined to be the subgroup of $\Aut(T_{3})$ generated by the wreath recursions $a,b,c$. It follows that the Towers of Hanoi Problem is equivalent to find an element $g$ in the Hanoi Tower group $\mathcal{H}=\langle a,b,c\rangle$ (that is a sequence of basic moves between two rods) such that the image of the starting configuration (the word $00\dots0\in\mathcal{E}^{n}$) is a goal configuration: $g(00\dots0)=11\dots1\text{ or }22\dots2$.

Furthermore remark that the Hanoi Towers group $\mathcal{H}$ is self-similar since every renormalization of the wreath recursions $a,b,c$ is either $a,b,c$ or $\Id$.
\end{example}

The following lemma is often used to prove that a given tree automorphism is actually equal to the identity tree automorphism. Indeed it may happen although its wreath recursion is not trivial. For instance, it turns out that the wreath recursion $g=\recursion[g,g,\dots,g]$ is actually the identity tree automorphism $\Id\in\Aut(T_{d})$ (for instance by applying lemma below).

\begin{lem}\label{LemIdentityTreeAutomorphism}
Let $g_{1},g_{2},\dots,g_{m}$ be $m\geqslant 1$ tree automorphisms in $\Aut(T_{d})$ such that
\begin{itemize}
\item every root permutation $\sigma_{g_{k}}=g_{k}|_{\mathcal{E}^{1}}$ is the identity permutation on the the alphabet $\mathcal{E}$
\item and every renormalization $g_{k,\varepsilon}=\mathcal{R}_{\varepsilon}(g_{k})$ belongs to the subgroup of $\Aut(T_{d})$ generated by $g_{1},g_{2},\dots,g_{m}$
\end{itemize}
In terms of wreath recursions, $g_{1},g_{2},\dots,g_{m}$ are assumed to be written as follows
$$\left\{\begin{array}{rcl} g_{1} & = & \recursion[g_{1,0},g_{1,1},\dots,g_{1,d-1}] \\ g_{2} & = & \recursion[g_{2,0},g_{2,1},\dots,g_{2,d-1}] \\ & \dots & \\ g_{m} & = & \recursion[g_{m,0},g_{m,1},\dots,g_{m,d-1}] \end{array}\right.\quad\text{where}\quad\forall k,\forall\varepsilon\in\mathcal{E},\ g_{k,\varepsilon}\in\Big\langle g_{1},g_{2},\dots,g_{m}\Big\rangle$$
Then $g_{1},g_{2},\dots,g_{m}$ are all equal to the identity tree automorphism $\Id\in\Aut(T_{d})$.
\end{lem}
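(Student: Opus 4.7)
The plan is to show that the entire subgroup $G=\langle g_{1},g_{2},\dots,g_{m}\rangle$ is trivial; since each $g_{k}$ lies in $G$, this yields the lemma. The strategy has two stages: first upgrade the hypotheses (which are stated only for the generators) to statements about every element of $G$, then run a minimal-level induction to derive a contradiction from any nontriviality.

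For the first stage, I would introduce the set
$$H=\Big\{h\in\Aut(T_{d})\ /\ \sigma_{h}=\Id\text{ and }\forall\varepsilon\in\mathcal{E},\ \mathcal{R}_{\varepsilon}(h)\in G\Big\}$$
and verify that $H$ is a subgroup of $\Aut(T_{d})$. The composition formula of Lemma \ref{LemComputationsWreathRecursion} gives, for $g,h\in H$,
$$\sigma_{g.h}=\sigma_{h}\circ\sigma_{g}=\Id\quad\text{and}\quad\mathcal{R}_{\varepsilon}(g.h)=g_{\varepsilon}.h_{\sigma_{g}(\varepsilon)}=g_{\varepsilon}.h_{\varepsilon}\in G,$$
while the inversion formula gives $\mathcal{R}_{\varepsilon}(g^{-1})=g_{\sigma_{g}^{-1}(\varepsilon)}^{-1}=g_{\varepsilon}^{-1}\in G$. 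The hypotheses of the lemma state exactly that $g_{1},\dots,g_{m}\in H$, so $G\subseteq H$. Consequently every element of $G$ has trivial root permutation, and every level-$1$ renormalization of an element of $G$ lies back in $G$.

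For the second stage, assume for contradiction that $G\neq\{\Id\}$, and let $n\geqslant 1$ be the smallest level on which some element $h\in G$ acts non-trivially. Because every element of $G$ satisfies $\sigma_{h}=\Id$, the level $n=1$ is forbidden, so $n\geqslant 2$. Choose $\varepsilon w\in\mathcal{E}^{n}$ (with $w\in\mathcal{E}^{n-1}$) such that $h(\varepsilon w)\neq\varepsilon w$. The formula
$$h(\varepsilon w)=\Big(\sigma_{h}(\varepsilon)\Big)\Big(\mathcal{R}_{\varepsilon}(h)(w)\Big)=\varepsilon\,\Big(\mathcal{R}_{\varepsilon}(h)(w)\Big)$$
forces $\mathcal{R}_{\varepsilon}(h)(w)\neq w$. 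Since $\mathcal{R}_{\varepsilon}(h)\in G$ by the first stage, this is an element of $G$ moving a vertex of level $n-1$, contradicting minimality.

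The only real subtlety — and the step I expect to be the main obstacle — is the first stage: one must be careful to pass from the hypotheses on the generators to the same statements on all of $G$, and this closure crucially uses $\sigma_{g}=\Id$ (otherwise the composition formula would produce $h_{\sigma_{g}(\varepsilon)}$ rather than $h_{\varepsilon}$, and the inclusion $\mathcal{R}_{\varepsilon}(g.h)\in G$ would not follow). Once $G\subseteq H$ is established, the minimal-level argument is essentially forced.
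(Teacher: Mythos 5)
Your proof is correct and takes essentially the same route as the paper's: an induction on the level of the tree driven by the identity $h(\varepsilon w)=\big(\sigma_{h}(\varepsilon)\big)\big(\mathcal{R}_{\varepsilon}(h)(w)\big)$, with your minimal-counterexample formulation being just the contrapositive of the paper's direct induction. Your first stage (the subgroup $H$ and the inclusion $G\subseteq H$) merely makes explicit a closure step --- passing from the generators to arbitrary elements of $\langle g_{1},\dots,g_{m}\rangle$ --- that the paper leaves implicit, which is a reasonable bit of extra care but not a different argument.
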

\begin{proof}
Let $n\geqslant 2$ be an integer and assume by induction that every $g_{k}$ acts as the identity on the set of all words of lenght $n-1$. Let $w=\varepsilon_{1}\varepsilon_{2}\dots\varepsilon_{n}\in\mathcal{E}^{\star}$ be a word of length $n$. The image of $w$ under any tree automorphism $g_{k}$ may be written as follows
$$g_{k}(w)=g_{k}(\varepsilon_{1}\varepsilon_{2}\dots\varepsilon_{n})=\Big(g_{k}|_{\mathcal{E}^{1}}(\varepsilon_{1})\Big)\Big(\mathcal{R}_{\varepsilon_{1}}(g_{k})(\varepsilon_{2}\dots\varepsilon_{n})\Big)=\Big(\sigma_{g_{k}}(\varepsilon_{1})\Big)\Big(g_{k,\varepsilon_{1}}(\varepsilon_{2}\dots\varepsilon_{n})\Big)$$
The first assumption gives $\sigma_{g_{k}}(\varepsilon_{1})=\varepsilon_{1}$. Furthermore it follows from the second assumption and from the inductive hypothesis that $g_{k,\varepsilon_{1}}(\varepsilon_{2}\dots\varepsilon_{n})=\varepsilon_{2}\dots\varepsilon_{n}$ since $\varepsilon_{2}\dots\varepsilon_{n}$ is a word of length $n-1$. Finally $g_{k}(w)=(\varepsilon_{1})(\varepsilon_{2}\dots\varepsilon_{n})=w$ and the result follows by induction (the inductive start is given by the first assumption).
\end{proof}

In practice, to show that a given tree automorphism $g\in\Aut(T_{d})$ is actually the identity tree automorphism, the aim is to find together with $g=g_{1}$ some tree automorphisms $g_{2},\dots,g_{m}$ which satisfy assumptions from Lemma \ref{LemIdentityTreeAutomorphism}.

\newpage

\subsection{Partial self-covering}

\begin{defn}
Let $\mathcal{M}$ be a path connected and locally path connected topological space. A partial self-covering of $\mathcal{M}$ is a degree $d\geqslant 2$ covering $p:\mathcal{M}_{1}\rightarrow\mathcal{M}$ where $\mathcal{M}_{1}\subseteq\mathcal{M}$. 

A partial self-covering can be iterated and the iterates, denoted by $p^{n}:\mathcal{M}_{n}\rightarrow\mathcal{M}$ where $\mathcal{M}_{n}=p^{-n}(\mathcal{M})\subseteq\mathcal{M}$, are also partial self-coverings.
$$\xymatrix{\dots \ar[r] & \mathcal{M}_{3} \ar[r]_{\textstyle p} & \mathcal{M}_{2} \ar[r]_{\textstyle p} & \mathcal{M}_{1} \ar[r]_{\textstyle p} & \mathcal{M} }$$
\end{defn}

\begin{example}[Examples]\quad\\
Let $f$ be a post-critically finite branched covering map on the topological sphere $\S^{2}$ and denote by $P_{f}$ its post-critical set. Since $P_{f}\subset f^{-1}(P_{f})$, $f$ induces the following partial self-covering.
$$f:\mathcal{M}_{1}=\S^{2}\backslash f^{-1}(P_{f})\longrightarrow\mathcal{M}=\S^{2}\backslash P_{f}$$
The same holds for post-critically finite rational maps on the Riemann sphere $\CC$ or for post-critically finite polynomials map on the complex plane $\C$.
\end{example}

Recall that a partial self-covering satisfies the following path and homotopy lifting properties.

\begin{prop}\label{PropLift}
\begin{description}
\item[(1)] For every path $\ell$ in $\mathcal{M}$ with base point $\ell(0)=t\in\mathcal{M}$ and any preimage $x\in p^{-1}(t)$, there exists a unique path $\mathscr{L}_{x}$ in $\mathcal{M}_{1}$ with base point $\mathscr{L}_{x}(0)=x$ such that $p\circ\mathscr{L}_{x}=\ell$ (see the commutative diagram below). $\mathscr{L}_{x}$ is called the $p$-lift of $\ell$ from $x$.
$$\xymatrix{ && [0,1] \ar[lld]_{\textstyle \mathscr{L}_{x}}\ar[d]^{\textstyle \ell} \\ \mathcal{M}_{1} \ar[rr]_{\textstyle p} && \mathcal{M} }$$
\item[(2)] Furthermore if $l:[0,1]\times[0,1]\rightarrow\mathcal{M}$ is a homotopy of paths with $l(0,.)=\ell$ then there exists a unique homotopy of paths $L_{x}:[0,1]\times[0,1]\rightarrow\mathcal{M}_{1}$ such that $L_{x}(s,.)$ is the $p$-lift of $l(s,.)$ from $l(s,0)$ for every $s\in[0,1]$ (in particular $L_{x}(0,.)=\mathscr{L}_{x}$ for $s=0$).

\item[(3)] Therefore for every loop $\gamma$ in $\mathcal{M}$ with base point $\ell(0)=t\in\mathcal{M}$ and any preimage $x\in p^{-1}(t)$, the terminal point $y=\Gamma_{x}(1)$ of the $p$-lift $\Gamma_{x}$ of $\gamma$ from $x$ depends on $\gamma$ only through its homotopy class $[\gamma]\in\pi_{1}(\mathcal{M},t)$. Since $y$ is also a preimage of $t$ under $p$, it turns out that the fundamental group $\pi_{1}(\mathcal{M},t)$ acts on $p^{-1}(t)$ by $[\gamma]x=\Gamma_{x}(1)$.
$$\xymatrix @!0 @R=2pc @C=3pc { x \ar@{~>}[dd]_{\textstyle \Gamma_{x}}\ar[rrd]^{\textstyle p} && \\ && t \ar@{~>}@(ur,dr)[]^{\textstyle [\gamma]} \\ y\ar[rru]_{\textstyle p} && }$$
\end{description}
\end{prop}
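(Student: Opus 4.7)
The plan is to derive all three parts from the standard machinery of covering space theory, the only twist being that $p$ is a partial self-covering rather than a global covering; but since $p:\mathcal{M}_{1}\rightarrow\mathcal{M}$ is an honest covering map of its codomain, the usual lifting theorems apply directly. The core tool is the existence, for every $t'\in\mathcal{M}$, of an evenly covered open neighborhood $U_{t'}$ such that $p^{-1}(U_{t'})$ decomposes as a disjoint union of open sheets, each mapped homeomorphically onto $U_{t'}$ by $p$. The local path connectedness assumption on $\mathcal{M}$ (and hence on $\mathcal{M}_{1}$) lets me take those sheets path connected.

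For Part (1), I would first pull back an evenly covered open cover of $\mathcal{M}$ through $\ell$ to obtain an open cover of $[0,1]$, then invoke compactness via the Lebesgue number lemma to extract a subdivision $0=t_{0}<t_{1}<\dots<t_{n}=1$ such that each $\ell([t_{i-1},t_{i}])$ lies inside a single evenly covered set $U_{i}$. The lift $\mathscr{L}_{x}$ is built inductively: the base step uses the unique sheet of $p^{-1}(U_{1})$ containing $x$ and sets $\mathscr{L}_{x}|_{[0,t_{1}]}=(p|_{\text{sheet}})^{-1}\circ\ell$; each subsequent step repeats the construction starting from $\mathscr{L}_{x}(t_{i-1})$. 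Uniqueness is automatic: any rival lift agrees locally by injectivity of $p$ on each sheet, and connectedness of $[0,1]$ propagates this agreement globally.

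Part (2) admits a completely analogous treatment on the square $[0,1]\times[0,1]$. I would subdivide both coordinates into small intervals, producing a grid of subrectangles each sent by $l$ into an evenly covered neighborhood, and build $L_{x}$ one rectangle at a time, starting from the bottom-left corner $l(0,0)=\ell(0)=t$ with chosen preimage $x$ and proceeding lexicographically. Consistency along edges shared by adjacent rectangles follows from the uniqueness established in Part (1) applied to those edges themselves. The identification of $L_{x}(s,\cdot)$ with the $p$-lift of $l(s,\cdot)$ from $l(s,0)$ is then again forced by uniqueness. Part (3) follows immediately: given a homotopy $l$ relative to endpoints between two loops $\gamma$ and $\gamma'$ based at $t$, the right-edge map $s\mapsto L_{x}(s,1)$ is continuous and takes values in the discrete fiber $p^{-1}(t)$, hence is constant, forcing $\Gamma_{x}(1)=\Gamma'_{x}(1)$. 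Well-definedness of the action $[\gamma]x=\Gamma_{x}(1)$ on $p^{-1}(t)$ is exactly this statement.

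The main obstacle is the bookkeeping for the grid argument in Part (2): ensuring that the rectangle-by-rectangle lift glues to a jointly continuous map on $[0,1]\times[0,1]$ requires checking that at every internal vertex and along every internal edge the locally chosen sheet is forced by what has already been defined. Once that verification is in place, the partial nature of the self-covering contributes no additional difficulty, since all lifts land in $\mathcal{M}_{1}$, on which $p$ is a bona fide covering of $\mathcal{M}$.
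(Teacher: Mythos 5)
Your proposal is the standard covering-space lifting argument (Lebesgue number subdivision for paths, grid subdivision for homotopies, discreteness of the fiber for part (3)), and it is correct; note that since $p:\mathcal{M}_{1}\to\mathcal{M}$ is an honest covering of $\mathcal{M}$, the ``partial'' aspect indeed costs nothing, exactly as you say. The paper itself gives no proof --- it explicitly \emph{recalls} this proposition as standard algebraic topology --- so there is no alternative route to compare against; the only point you leave implicit is the verification that $[\gamma]x=\Gamma_{x}(1)$ actually satisfies the group-action axioms (the lift of a concatenation $\gamma.\gamma'$ from $x$ is the concatenation of the lift of $\gamma$ from $x$ with the lift of $\gamma'$ from $\Gamma_{x}(1)$), which is routine but worth one sentence.
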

The same holds for $p^{n}:\mathcal{M}_{n}\rightarrow\mathcal{M}$ as well, namely the fundamental group $\pi_{1}(\mathcal{M},t)$ acts on the set of preimages $p^{-n}(t)$ by $[\gamma]x=\Gamma_{x}(1)$ where $\Gamma_{x}$ is the $p^{n}$-lift of $\gamma$ from $x$.

\section{Tree of preimages}\label{SecTreePreimages}

Let $p:\mathcal{M}_{1}\rightarrow\mathcal{M}$ be a partial self-covering of degree $d\geqslant 2$ and $t$ be a point in $\mathcal{M}$.

\subsection{A right-hand tree}

\begin{defn}\label{DefRightTree}
 The (right-hand) tree of preimages $T(p,t)$ is the rooted tree defined as follows
\begin{description}
\item[Root:] the point $t$
\item[Vertices:] the abstract set of preimages $\bigsqcup_{n\geqslant 0} p^{-n}(t)$
\item[Edges:] all the pairs $\{p(x),x\}$ where $p(x)\in p^{-n}(t)$ and $x\in p^{-(n+1)}(t)$ for some $n\geqslant 0$
\end{description}
By ``abstract set'', one distinguishes a same point that belongs to two distinct levels. More precisely, some preimages corresponding to distinct levels may coincide in $\mathcal{M}_{1}$ but are distinguished in $T(p,t)$ (in particular every edge is well defined).
\end{defn}

The graph below shows the first levels of a tree of preimages of a degree $d=2$ partial self-covering.
$$\xymatrix @!0 @R=0.8pc @C=6pc {
&&& \\
&& \ar@{<-}[ru]^{p}\ar@{<-}[rd]_{p} & \\
&&& \\
& \ar@{<-}[ruu]^{p}\ar@{<-}[rdd]_{p} && \\
&&& \\
&& \ar@{<-}[ru]^{p}\ar@{<-}[rd]_{p} & \\
&&& \\
t \ar@{<-}[ruuuu]^{p}\ar@{<-}[rdddd]_{p} &&& \hspace{30pt}\dots \\
&&& \\
&& \ar@{<-}[ru]^{p}\ar@{<-}[rd]_{p} & \\
&&& \\
& \ar@{<-}[ruu]^{p}\ar@{<-}[rdd]_{p} && \\
&&& \\
&& \ar@{<-}[ru]^{p}\ar@{<-}[rd]_{p} & \\
&&&
}$$

\begin{example}\quad\\
Consider the degree $d=2$ partial self-covering $Q_{0}:\C\backslash\{0\}\rightarrow\C\backslash\{0\},\ z\mapsto z^{2}$ and let $t=1$ be the root. The first two levels of the tree of preimages $T(Q_{0},1)$ are then
$$\xymatrix @!0 @R=0.8pc @C=6pc {
&&& \\
&& 1 \ar@{<-}[ru]^{Q_{0}}\ar@{<-}[rd]_{Q_{0}} & \\
&&& \\
& 1 \ar@{<-}[ruu]^{Q_{0}}\ar@{<-}[rdd]_{Q_{0}} && \\
&&& \\
&& -1 \ar@{<-}[ru]^{Q_{0}}\ar@{<-}[rd]_{Q_{0}} & \\
&&& \\
t=1 \ar@{<-}[ruuuu]^{Q_{0}}\ar@{<-}[rdddd]_{Q_{0}} &&& \hspace{30pt}\dots \\
&&& \\
&& i \ar@{<-}[ru]^{Q_{0}}\ar@{<-}[rd]_{Q_{0}} & \\
&&& \\
& -1\ar@{<-}[ruu]^{Q_{0}}\ar@{<-}[rdd]_{Q_{0}} && \\
&&& \\
&& -i \ar@{<-}[ru]^{Q_{0}}\ar@{<-}[rd]_{Q_{0}} & \\
&&&
}$$
\end{example}

It turns out that the tree of preimages $T(p,t)$ of a degree $d$ partial self-covering is isomorphic to the regular rooted tree $T_{d}$. However there is no canonical choice for a tree isomorphism between them. Actually Definition \ref{DefRightTree} does not provide a canonical labeling of all vertices of $T(p,t)$.

\subsection{A left-hand tree}

\begin{defn}
A labeling choice $(L)$ for the partial covering $p:\mathcal{M}_{1}\rightarrow\mathcal{M}$ and the base point $t\in\mathcal{M}$ is the data of
\begin{itemize}
\item a numbering of the set $p^{-1}(t)=\{x_{0},x_{1},\dots,x_{d-1}\}$
\item and for every letter $\varepsilon\in \mathcal{E}=\{0,1,\dots,d-1\}$, a path $\ell_{\varepsilon}$ in $\mathcal{M}$ from $t$ to $x_{\varepsilon}$ 
\end{itemize}
\end{defn}

\begin{defn}\label{DefLeftTree}
Let $(L)$ be a labeling choice. Applying Proposition \ref{PropLift} for every path $\ell_{\varepsilon}$, one can consider the $p$-lifts of $\ell_{\varepsilon}$ from $x_{w}$ for every $w\in \mathcal{E}^{1}=\mathcal{E}$. Terminal points of those lifts are denoted by $x_{\varepsilon w}$ and are preimages of $t$ under $p^{2}$. One can iterate this process by induction. More precisely from every preimage $x_{w}\in p^{-n}(t)$ labelled with a word $w\in\mathcal{E}^{\star}$ of length $n$, there is a unique $p^{n}$-lift of $\ell_{\varepsilon}$ whose terminal point is a preimage of $t$ under $p^{n+1}$ denoted by $x_{\varepsilon w}$.

Then the left-hand tree of preimages $T^{(L)}(p,t)$ is the rooted tree defined as follows
\begin{description}
\item[Root:] the point $t=x_{\emptyset}$
\item[Vertices:] the abstract set of labeled preimages $\bigsqcup_{n\geqslant 0} p^{-n}(t) =\bigsqcup_{n\geqslant 0}\{x_{w}\ /\ w\in \mathcal{E}^{n}\}$
\item[Edges:] all the pairs $\{x_{\varepsilon w},x_{w}\}$ where $w$ is a word in $\mathcal{E}^{\star}$ and $\varepsilon$ is a letter in $\mathcal{E}$
\end{description}
\end{defn}

The graph below shows the first levels of a left-hand tree of preimages of a degree $d=2$ partial self-covering (for convenience, the lifts of $\ell_{\varepsilon}$ are still denoted by $\ell_{\varepsilon}$ for every $\varepsilon\in \mathcal{E}$).
$$\xymatrix @!0 @R=0.7pc @C=6pc {
&&&& \\
& x_{000} \ar@{~>}[lu]_{\ell_{0}}\ar@{~>}[ld]^{\ell_{1}} &&& \\
&&&& \\
&& x_{00} \ar@{~>}[luu]_{\ell_{0}}\ar@{~>}[ldd]^{\ell_{1}} && \\
&&&& \\
& x_{100} \ar@{~>}[lu]_{\ell_{0}}\ar@{~>}[ld]^{\ell_{1}} &&& \\
&&&& \\
&&& x_{0} \ar@{~>}[luuuu]_{\ell_{0}}\ar@{~>}[ldddd]^{\ell_{1}} & \\
&&&& \\
& x_{010} \ar@{~>}[lu]_{\ell_{0}}\ar@{~>}[ld]^{\ell_{1}} &&& \\
&&&& \\
&& x_{10} \ar@{~>}[luu]_{\ell_{0}}\ar@{~>}[ldd]^{\ell_{1}} && \\
&&&& \\
& x_{110} \ar@{~>}[lu]_{\ell_{0}}\ar@{~>}[ld]^{\ell_{1}} &&& \\
&&&& \\
\dots\hspace{30pt} &&&& t \ar@{~>}[luuuuuuuu]_{\ell_{0}}\ar@{~>}[ldddddddd]^{\ell_{1}} \\
&&&& \\
& x_{001} \ar@{~>}[lu]_{\ell_{0}}\ar@{~>}[ld]^{\ell_{1}} &&& \\
&&&& \\
&& x_{01} \ar@{~>}[luu]_{\ell_{0}}\ar@{~>}[ldd]^{\ell_{1}} && \\
&&&& \\
& x_{101} \ar@{~>}[lu]_{\ell_{0}}\ar@{~>}[ld]^{\ell_{1}} &&& \\
&&&& \\
&&& x_{1} \ar@{~>}[luuuu]_{\ell_{0}}\ar@{~>}[ldddd]^{\ell_{1}} & \\
&&&& \\
& x_{011} \ar@{~>}[lu]_{\ell_{0}}\ar@{~>}[ld]^{\ell_{1}} &&& \\
&&&& \\
&& x_{11} \ar@{~>}[luu]_{\ell_{0}}\ar@{~>}[ldd]^{\ell_{1}} && \\
&&&& \\
& x_{111} \ar@{~>}[lu]_{\ell_{0}}\ar@{~>}[ld]^{\ell_{1}} &&& \\
&&&
}$$

It turns out that the set of all vertices of a left-hand tree of preimages $T^{(L)}(p,t)$ is the same as that one of the (right-hand) tree of preimages $T(p,t)$. However $T^{(L)}(p,t)$ provides a labeling of all vertices of $T(p,t)$ (according to Definition \ref{DefLeftTree}).

\begin{example}\quad\\
For $Q_{0}:\C\backslash\{0\}\rightarrow\C\backslash\{0\},\ z\mapsto z^{2}$, the preimages of $t=1$ are $x_{0}=1$ and $x_{1}=-1$. Choose the paths $\ell_{0},\ell_{1}$ as follows
$$\xymatrix{ x_{1} & 0 & x_{0} \ar@{~>}@/_{2pc}/[ll]_{\textstyle \ell_{1}}\ar@{~>}@(dr,ur)[]_{\textstyle \ell_{0}} }$$
Lifting these paths gives (for convenience, the lifts of $\ell_{0},\ell_{1}$ are still denoted by $\ell_{0},\ell_{1}$ respectively)
$$\xymatrix{ & x_{10} & \\ x_{01} \ar@{~>}@/_{1pc}/[rd]_{\textstyle \ell_{1}}\ar@{~>}@(ul,dl)[]_{\textstyle \ell_{0}} & 0 & x_{00} \ar@{~>}@/_{1pc}/[lu]_{\textstyle \ell_{1}}\ar@{~>}@(dr,ur)[]_{\textstyle \ell_{0}} \\ & x_{11} & }$$
One can deduce the first two levels of the left-hand tree of preimages $T^{(L)}(Q_{0},1)$
$$\xymatrix @!0 @R=0.6pc @C=6pc {
&&& \\
& x_{00}=1 \ar@{~>}[lu]_{\ell_{0}}\ar@{~>}[ld]^{\ell_{1}} && \\
&&& \\
&& x_{0}=1 \ar@{~>}[luu]_{\ell_{0}}\ar@{~>}[ldd]^{\ell_{1}} & \\
&&& \\
& x_{10}=i \ar@{~>}[lu]_{\ell_{0}}\ar@{~>}[ld]^{\ell_{1}} && \\
&&& \\
\dots\hspace{30pt} &&& t=1 \ar@{~>}[luuuu]_{\ell_{0}}\ar@{~>}[ldddd]^{\ell_{1}} \\
&&& \\
& x_{01}=-1 \ar@{~>}[lu]_{\ell_{0}}\ar@{~>}[ld]^{\ell_{1}} && \\
&&& \\
&& x_{1}=-1 \ar@{~>}[luu]_{\ell_{0}}\ar@{~>}[ldd]^{\ell_{1}} & \\
&&& \\
& x_{11}=-i \ar@{~>}[lu]_{\ell_{0}}\ar@{~>}[ld]^{\ell_{1}} && \\
&&&
}$$

Notice that distinct choices of paths $\ell_{0},\ell_{1}$ induce distinct left-hand trees of preimages. For instance
$$\xymatrix{ x_{1} & 0 & x_{0} \ar@{~>}@/^{2pc}/[ll]^{\textstyle \ell_{1}}\ar@{~>}@(dr,ur)[]_{\textstyle \ell_{0}} }$$
gives after lifting
$$\xymatrix{ & x_{11} & \\ x_{01} \ar@{~>}@/^{1pc}/[ru]^{\textstyle \ell_{1}}\ar@{~>}@(ul,dl)[]_{\textstyle \ell_{0}} & 0 & x_{00} \ar@{~>}@/^{1pc}/[ld]^{\textstyle \ell_{1}}\ar@{~>}@(dr,ur)[]_{\textstyle \ell_{0}} \\ & x_{10} & }$$
and thus the left-hand tree of preimages $T^{(L)}(Q_{0},1)$ becomes
$$\xymatrix @!0 @R=0.6pc @C=6pc {
&&& \\
& x_{00}=1 \ar@{~>}[lu]_{\ell_{0}}\ar@{~>}[ld]^{\ell_{1}} && \\
&&& \\
&& x_{0}=1 \ar@{~>}[luu]_{\ell_{0}}\ar@{~>}[ldd]^{\ell_{1}} & \\
&&& \\
& x_{10}=-i \ar@{~>}[lu]_{\ell_{0}}\ar@{~>}[ld]^{\ell_{1}} && \\
&&& \\
\dots\hspace{30pt} &&& t=1 \ar@{~>}[luuuu]_{\ell_{0}}\ar@{~>}[ldddd]^{\ell_{1}} \\
&&& \\
& x_{01}=-1 \ar@{~>}[lu]_{\ell_{0}}\ar@{~>}[ld]^{\ell_{1}} && \\
&&& \\
&& x_{1}=-1 \ar@{~>}[luu]_{\ell_{0}}\ar@{~>}[ldd]^{\ell_{1}} & \\
&&& \\
& x_{11}=i \ar@{~>}[lu]_{\ell_{0}}\ar@{~>}[ld]^{\ell_{1}} && \\
&&&
}$$
for this choice of $\ell_{0},\ell_{1}$.
\end{example}

\begin{prop}\label{PropEdges}
For every vertex $x_{w}$ of $T^{(L)}(p,t)$ labeled with a word $w\in\mathcal{E}^{\star}$, the preimages of $x_{w}$ under $p$ are
$$p^{-1}(x_{w})=\{x_{w\varepsilon}\ /\ \varepsilon\in \mathcal{E}\}$$
\end{prop}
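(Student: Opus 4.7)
The plan is to prove the key identity $p(x_{w\varepsilon}) = x_w$ for every $w \in \mathcal{E}^\star$ and $\varepsilon \in \mathcal{E}$ by induction on $n = |w|$, and then deduce the stated equality from the fact that $p$ is a degree $d$ covering.

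For the base case $n = 0$, we have $x_\emptyset = t$, and $\{x_\varepsilon : \varepsilon \in \mathcal{E}\} = p^{-1}(t)$ is exactly the labeling choice.

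For the inductive step, fix $w = \varepsilon_1 \varepsilon_2 \dots \varepsilon_n$ of length $n \geq 1$ and $\varepsilon \in \mathcal{E}$. The word $w\varepsilon$ has length $n+1$; parsing it as its first letter $\varepsilon_1$ followed by the suffix $\varepsilon_2 \dots \varepsilon_n \varepsilon$ of length $n$, Definition \ref{DefLeftTree} presents $x_{w\varepsilon}$ as the terminal point of the unique $p^n$-lift $\tilde{\ell}$ of $\ell_{\varepsilon_1}$ starting at $x_{\varepsilon_2 \dots \varepsilon_n \varepsilon}$. Composing with $p$ yields a path $p \circ \tilde{\ell}$ that satisfies $p^{n-1} \circ (p \circ \tilde{\ell}) = p^n \circ \tilde{\ell} = \ell_{\varepsilon_1}$, so $p \circ \tilde{\ell}$ is a $p^{n-1}$-lift of $\ell_{\varepsilon_1}$. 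Its starting point is $p(x_{\varepsilon_2 \dots \varepsilon_n \varepsilon})$, which by the inductive hypothesis applied to $w' = \varepsilon_2 \dots \varepsilon_n$ (of length $n-1$) equals $x_{\varepsilon_2 \dots \varepsilon_n}$. Uniqueness of lifts (Proposition \ref{PropLift}) then forces $p \circ \tilde{\ell}$ to coincide with the very lift used in Definition \ref{DefLeftTree} to construct $x_{\varepsilon_1 \varepsilon_2 \dots \varepsilon_n} = x_w$, so its terminal point is $x_w$. This gives $p(x_{w\varepsilon}) = x_w$.

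This yields $\{x_{w\varepsilon} : \varepsilon \in \mathcal{E}\} \subseteq p^{-1}(x_w)$. Since $p$ has degree $d$, the fiber $p^{-1}(x_w)$ has exactly $d$ elements, so it remains to check that the $d$ candidates $x_{w\varepsilon}$ are pairwise distinct. If $x_{w\varepsilon} = x_{w\varepsilon'}$ for some $\varepsilon \neq \varepsilon'$, the two defining $p^n$-lifts of $\ell_{\varepsilon_1}$ share a common terminal point; reversing them produces two $p^n$-lifts of the reversed path sharing a common starting point, hence coinciding by Proposition \ref{PropLift}, which forces $x_{\varepsilon_2 \dots \varepsilon_n \varepsilon} = x_{\varepsilon_2 \dots \varepsilon_n \varepsilon'}$. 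This reduces distinctness at level $n+1$ to the same statement at level $n$, with the base case supplied by the numbering of $p^{-1}(t)$.

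The main subtlety is the asymmetry between Definition \ref{DefLeftTree}, which labels vertices by prepending letters on the left, and the proposition, which describes preimages by appending letters on the right. The inductive step above navigates this by projecting the defining lift one level down via $p$ and invoking uniqueness of lifts to identify it with the lift used at the previous level.
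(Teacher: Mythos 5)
Your proof is correct and follows essentially the same route as the paper's: both establish $p(x_{w\varepsilon})=x_{w}$ by induction on the length of $w$, peeling off the first letter $\varepsilon_{1}$ and using uniqueness of lifts to identify the $p$-projection of the defining $p^{n}$-lift of $\ell_{\varepsilon_{1}}$ with the lift used one level down. Your extra paragraph checking that the $d$ candidates $x_{w\varepsilon}$ are pairwise distinct (by reversing the lifts) addresses a point the paper silently skips when it passes from ``each $x_{w\varepsilon}$ is a preimage'' to the stated equality of sets, so it is a welcome refinement rather than a deviation.
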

\begin{proof}
Since $x_{w}$ has exactly $d$ preimages under $p$, one only needs to check that $x_{w\varepsilon}$ is a preimage of $x_{w}$ for every word $w\in\mathcal{E}^{\star}$ and every letter $\varepsilon\in\mathcal{E}$. The main idea is that the path from $x_{\varepsilon}$ to $x_{w\varepsilon}$ formed by concatenation of some lifts (following Definition \ref{DefLeftTree}) is a $p$-lift of the path from $t$ to $x_{w}$ formed by concatenation of some lifts (following Definition \ref{DefLeftTree} as well). However an induction will be used in order to avoid overloaded notations.

The result obviously holds for the empty word, that is $p^{-1}(x_{\emptyset})=p^{-1}(t)=\{x_{0},x_{1},\dots,x_{d-1}\}$. Let $w=\varepsilon_{1}\varepsilon_{2}\dots\varepsilon_{n}\in \mathcal{E}^{\star}$ be a word of length $n\geqslant 1$. From Definition \ref{DefLeftTree}, $x_{w}$ is the terminal point of the $p^{n-1}$-lift of $\ell_{\varepsilon_{1}}$ from $x_{\varepsilon_{2}\dots\varepsilon_{n}}$, say $\mathscr{L}_{\varepsilon_{2}\dots\varepsilon_{n}}^{\varepsilon_{1}}$. Assume by induction that the result holds for the word $\varepsilon_{2}\dots\varepsilon_{n}$ of length $n-1$ and let $x_{\varepsilon_{2}\dots\varepsilon_{n}\varepsilon}$ be a preimage of $x_{\varepsilon_{2}\dots\varepsilon_{n}}$ for some $\varepsilon\in \mathcal{E}$. Following Definition \ref{DefLeftTree}, there is a unique $p^{n}$-lift of $\ell_{\varepsilon_{1}}$ from $x_{\varepsilon_{2}\dots\varepsilon_{n}\varepsilon}$, say $\mathscr{L}_{\varepsilon_{2}\dots\varepsilon_{n}\varepsilon}^{\varepsilon_{1}}$, whose terminal point is denoted by $x_{x_{\varepsilon_{1}\varepsilon_{2}\dots\varepsilon_{n}\varepsilon}}=x_{w\varepsilon}$. Since $\mathscr{L}_{\varepsilon_{2}\dots\varepsilon_{n}\varepsilon}^{\varepsilon_{1}}$ is a $p$-lift of $\mathscr{L}_{\varepsilon_{2}\dots\varepsilon_{n}}^{\varepsilon_{1}}$, $x_{w\varepsilon}$ is a preimage of $x_{w}$ under $p$. The result follows by induction.
\vspace{-10pt}
$$\xymatrix{ x_{w}=x_{\varepsilon_{1}\varepsilon_{2}\dots\varepsilon_{n}} &&& x_{\varepsilon_{2}\dots\varepsilon_{n}} \ar@{~>}[lll]_-{\textstyle \mathscr{L}_{\varepsilon_{2}\dots\varepsilon_{n}}^{\varepsilon_{1}}} \\ &&& \\ x_{w\varepsilon}=x_{\varepsilon_{1}\varepsilon_{2}\dots\varepsilon_{n}\varepsilon} \ar[uu]^{\textstyle p} &&& x_{\varepsilon_{2}\dots\varepsilon_{n}\varepsilon} \ar[uu]^{\textstyle p} \ar@{~>}[lll]^-{\textstyle \mathscr{L}_{\varepsilon_{2}\dots\varepsilon_{n}\varepsilon}^{\varepsilon_{1}}} }$$
\vspace{-20pt}
\end{proof}

\vspace{-10pt}
\begin{example}\quad\\
For $Q_{0}:\C\backslash\{0\}\rightarrow\C\backslash\{0\},\ z\mapsto z^{2}$ with $t=1$, $x_{0}=1$ and $x_{1}=-1$, choose the paths $\ell_{0},\ell_{1}$ as follows
\vspace{-5pt}
$$\xymatrix{ x_{1} & 0 & x_{0} \ar@{~>}@/_{2pc}/[ll]_{\textstyle \ell_{1}}\ar@{~>}@(dr,ur)[]_{\textstyle \ell_{0}} }$$
\vspace{-10pt}
Recall the first two levels of the left-hand and right-hand trees of preimages.
$$\xymatrix @!0 @R=0.6pc @ C=6pc {
&&&&&& \\
& x_{00}=1 \ar@{~>}[lu]_{\ell_{0}}\ar@{~>}[ld]^{\ell_{1}} &&&& 1 \ar@{<-}[ru]^{Q_{0}}\ar@{<-}[rd]_{Q_{0}} & \\
&&&&&& \\
&& x_{0}=1 \ar@{~>}[luu]_{\ell_{0}}\ar@{~>}[ldd]^{\ell_{1}} && 1 \ar@{<-}[ruu]^{Q_{0}}\ar@{<-}[rdd]_{Q_{0}} && \\
&&&&&& \\
& x_{10}=i \ar@{~>}[lu]_{\ell_{0}}\ar@{~>}[ld]^{\ell_{1}} &&&& -1 \ar@{<-}[ru]^{Q_{0}}\ar@{<-}[rd]_{Q_{0}} & \\
&&&&&& \\
\dots\hspace{30pt} &&& t=1 \ar@{~>}[luuuu]_{\ell_{0}}\ar@{~>}[ldddd]^{\ell_{1}}\ar@{<-}[ruuuu]^{Q_{0}}\ar@{<-}[rdddd]_{Q_{0}} &&& \hspace{30pt}\dots \\
&&&&&& \\
& x_{01}=-1 \ar@{~>}[lu]_{\ell_{0}}\ar@{~>}[ld]^{\ell_{1}} &&&& i \ar@{<-}[ru]^{Q_{0}}\ar@{<-}[rd]_{Q_{0}} && \\
&&&&&& \\
&& x_{1}=-1 \ar@{~>}[luu]_{\ell_{0}}\ar@{~>}[ldd]^{\ell_{1}} && -1 \ar@{<-}[ruu]^{Q_{0}}\ar@{<-}[rdd]_{Q_{0}} && \\
&&&&&& \\
& x_{11}=-i \ar@{~>}[lu]_{\ell_{0}}\ar@{~>}[ld]^{\ell_{1}} &&&& -i \ar@{<-}[ru]^{Q_{0}}\ar@{<-}[rd]_{Q_{0}} & \\
&&&&&&
}$$
One can deduce the induced labeling on the first levels of the tree of preimages $T(Q_{0},1)$
$$\xymatrix @!0 @R=0.3pc @C=7pc {
&&&& \\
&&& x_{000}=1 \ar@{-}[ru]\ar@{-}[rd] & \\
&&&& \\
&& x_{00}=1 \ar@{-}[ruu]\ar@{-}[rdd] && \\
&&&& \\
&&& x_{001}=-1 \ar@{-}[ru]\ar@{-}[rd] & \\
&&&& \\
& x_{0}=1 \ar@{-}[ruuuu]\ar@{-}[rdddd] &&& \\
&&&& \\
&&& x_{010}=i \ar@{-}[ru]\ar@{-}[rd] & \\
&&&& \\
&& x_{01}=-1 \ar@{-}[ruu]\ar@{-}[rdd] && \\
&&&& \\
&&& x_{011}=-i \ar@{-}[ru]\ar@{-}[rd] & \\
&&&& \\
t=1 \ar@{-}[ruuuuuuuu]\ar@{-}[rdddddddd] &&&& \hspace{30pt}\dots \\
&&&& \\
&&& x_{100}=e^{i\pi/4} \ar@{-}[ru]\ar@{-}[rd] & \\
&&&& \\
&& x_{10}=i \ar@{-}[ruu]\ar@{-}[rdd] && \\
&&&& \\
&&& x_{101}=e^{-3i\pi/4} \ar@{-}[ru]\ar@{-}[rd] & \\
&&&& \\
& x_{1}=-1 \ar@{-}[ruuuu]\ar@{-}[rdddd] &&& \\
&&&& \\
&&& x_{101}=e^{3i\pi/4} \ar@{-}[ru]\ar@{-}[rd] & \\
&&&& \\
&& x_{11}=-i \ar@{-}[ruu]\ar@{-}[rdd] && \\
&&&& \\
&&& x_{111}=e^{-i\pi/4} \ar@{-}[ru]\ar@{-}[rd] & \\
&&&&
}$$
\end{example}

\begin{prop}\label{PropTreeIsomorphism}
Label all vertices of the tree of preimages $T(p,t)$ like those of $T^{(L)}(p,t)$ for some given labeling choice $(L)$. Then the following holds
\vspace{-10pt}
\begin{description}
\item[-] The edges of $T(p,t)$ are all the pairs $\{x_{w},x_{w\epsilon}\}$ where $w$ is a word in $\mathcal{E}^{\star}$ and $\varepsilon$ is a letter in $\mathcal{E}$ (compare with the edges of $T^{(L)}(p,t)$)
\vspace{-10pt}
\item[-] The map $\varphi^{(L)}:x_{w}\mapsto w$ is a tree isomorphism from $T(p,t)$ onto the regular rooted tree $T_{d}$
\end{description}
\vspace{-10pt}
\end{prop}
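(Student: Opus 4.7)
The plan is to leverage Proposition \ref{PropEdges} throughout; it does essentially all the work. For the first bullet, I would just recall that by Definition \ref{DefRightTree} every edge of $T(p,t)$ is a pair $\{p(x),x\}$ with $p(x)\in p^{-n}(t)$ and $x\in p^{-(n+1)}(t)$. Writing $p(x)=x_{w}$ for the inductive label of level $n$, Proposition \ref{PropEdges} forces $x=x_{w\varepsilon}$ for some letter $\varepsilon\in\mathcal{E}$, so the edge takes the claimed form $\{x_{w},x_{w\varepsilon}\}$.

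For the second bullet I need to check three properties of $\varphi^{(L)}\colon x_{w}\mapsto w$: bijectivity, level preservation, and edge preservation. Level preservation is built into Definition \ref{DefLeftTree}, since each $x_{w}$ is by construction a preimage of $t$ under $p^{|w|}$ and therefore sits at level $|w|$ in $T(p,t)$, while $\varphi^{(L)}(x_{w})=w$ sits at level $|w|$ in $T_{d}$. Edge preservation is then a direct consequence of the first bullet, because $\varphi^{(L)}$ sends an edge $\{x_{w},x_{w\varepsilon}\}$ of $T(p,t)$ to $\{w,w\varepsilon\}$, which is an edge of the regular rooted tree $T_{d}$.

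The only step needing a small argument is bijectivity on each level. Here I would induct on $n$ and prove that $w\mapsto x_{w}$ is a bijection from $\mathcal{E}^{n}$ onto $p^{-n}(t)$. The case $n=0$ is immediate. For the inductive step I would decompose
\[
p^{-(n+1)}(t)=\bigsqcup_{w\in\mathcal{E}^{n}}p^{-1}(x_{w})=\bigsqcup_{w\in\mathcal{E}^{n}}\{x_{w\varepsilon}\,:\,\varepsilon\in\mathcal{E}\}
\]
using Proposition \ref{PropEdges}; since $p\colon\mathcal{M}_{1}\to\mathcal{M}$ is a degree $d$ covering, each fiber $p^{-1}(x_{w})$ contains exactly $d$ distinct points, which yields $d^{n+1}$ distinct labels, matching the cardinality of $\mathcal{E}^{n+1}$.

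The main potential pitfall is simply respecting the ``abstract set'' convention of Definition \ref{DefRightTree}, so that vertices from different levels are never conflated when counting (a point $y\in\mathcal{M}$ may well equal $p^{k}(y)$ for some $k$, but its copies at different levels are distinct vertices of $T(p,t)$). Once that bookkeeping is made explicit, the entire proposition reduces to Proposition \ref{PropEdges} together with the degree count of the covering $p$.
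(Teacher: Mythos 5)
Your proof is correct and follows essentially the same route as the paper: the first bullet and the edge-preserving axiom are deduced from Proposition \ref{PropEdges}, and the level-preserving axiom from Definition \ref{DefLeftTree}. The only difference is that you additionally spell out the level-by-level bijectivity of $w\mapsto x_{w}$ via a counting argument on the fibers of $p$, a point the paper's two-line proof leaves implicit.
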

\begin{proof}
The first point and the edge preserving axiom for $\varphi^{(L)}$ follow from Proposition \ref{PropEdges}. The level preserving axiom comes from Definition \ref{DefLeftTree}.
\end{proof}

\section{Iterated monodromy group}

Let $p:\mathcal{M}_{1}\rightarrow\mathcal{M}$ be a partial self-covering of degree $d\geqslant 2$ and $t$ be a point in $\mathcal{M}$.

\subsection{Monodromy action}

From Proposition \ref{PropLift}, the fundamental group $\pi_{1}(\mathcal{M},t)$ acts on $p^{-n}(t)$ for every $n\geqslant 0$, that is on the set of vertices of level $n$ in the tree of preimages $T(p,t)$.

\begin{defn}\label{DefMonodromyAction}
The action of $\pi_{1}(\mathcal{M},t)$ on the set of all vertices in the tree of preimages $T(p,t)$ is called the monodromy action. It may be seen as the following group homomorphism.
$$\Phi:\pi_{1}(\mathcal{M},t)\rightarrow\Sym\left(\bigsqcup_{n\geqslant 0} p^{-n}(t)\right),\ [\gamma]\mapsto\Big(\Phi_{[\gamma]}:x\mapsto[\gamma]x\Big)$$

Furthermore for any labeling choice $(L)$, the tree isomorphism $\varphi^{(L)}$ from Proposition \ref{PropTreeIsomorphism} induces a monodromy action on the set of all words $\mathcal{E}^{\star}$ defined as follows
$$\Phi^{(L)}:\pi_{1}(\mathcal{M},t)\rightarrow\Sym\left(\mathcal{E}^{\star}\right),\ [\gamma]\mapsto\left(\Phi^{(L)}_{[\gamma]}:w\mapsto[\gamma]w\right)\quad\text{where}\quad x_{[\gamma]w}=[\gamma]x_{w}$$
\end{defn}
More precisely the monodromy action induced by a given labeling choice $(L)$ is defined as follows
$$\forall[\gamma]\in\pi_{1}(\mathcal{M},t),\ \forall w\in\mathcal{E}^{\star},\quad\Phi^{(L)}_{[\gamma]}(w)=[\gamma]w=\varphi^{(L)}\Big([\gamma]x_{w}\Big)=\left(\varphi^{(L)}\circ\Phi_{[\gamma]}\circ\left(\varphi^{(L)}\right)^{-1}\right)(w)$$
In particular the monodromy action induced by another labeling choice $(L')$ is conjugate to that one coming from $(L)$ by the map $\varphi^{(L),(L')}=\varphi^{(L)}\circ\left(\varphi^{(L')}\right)^{-1}\in\Aut(T_{d})$ in the following way
$$\forall[\gamma]\in\pi_{1}(\mathcal{M},t),\quad\Phi^{(L')}_{[\gamma]}=\left(\varphi^{(L),(L')}\right)^{-1}\circ\Phi^{(L)}_{[\gamma]}\circ\left(\varphi^{(L),(L')}\right)$$
As a consequence, the monodromy action on $\mathcal{E}^{\star}$ is well defined up to conjugation by a tree automorphism of the form $\varphi^{(L),(L')}$ for any pair of labeling choices $(L)$ and $(L')$.

In practice, it is more convenient to use $\Phi^{(L)}$ than $\Phi$ for a ``relevant'' labeling choice $(L)$ in order to compute the monodromy action of a partial self-covering (since $(L)$ provides a labeling of every vertex in the tree of preimages).

As Theorem \ref{TheoTreeAction} will show, the monodromy action actually acts by tree automorphisms, namely $\Phi_{[\gamma]}\in\Aut(T(p,t))$ for every homotopy class $[\gamma]\in\pi_{1}(\mathcal{M},t)$ (and therefore $\Phi^{(L)}_{[\gamma]}\in\Aut(T_{d})$ for any labeling choice $(L)$).

\newpage

\begin{example}\quad\\
For $Q_{0}:\C\backslash\{0\}\rightarrow\C\backslash\{0\},\ z\mapsto z^{2}$ with $t=1$, $x_{0}=1$ and $x_{1}=-1$, choose the paths $\ell_{0},\ell_{1}$ as follows
\vspace{-10pt}\\
$$\xymatrix{ x_{1} & 0 & x_{0} \ar@{~>}@/_{2pc}/[ll]_{\textstyle \ell_{1}}\ar@{~>}@(dr,ur)[]_{\textstyle \ell_{0}} }$$
The fundamental group $\pi_{1}(\C\backslash\{0\},1)$ may be described as the infinite cyclic group generated by the homotopy class $[\gamma]$ coming from the following loop
\vspace{-15pt}\\
$$\xymatrix{ && \\ & 0 & t \ar@{~>} `ul[ul] `dl[dl]_{\textstyle \gamma} `dr[] [] \\ && }$$
\vspace{-10pt}\\
where $\gamma$ surrounds the point $0$ in a counterclockwise motion. Lifting this loop gives
\vspace{-10pt}\\
$$\xymatrix{ x_{1} \ar@{~>}@/_{1pc}/[rr]_{\textstyle \Gamma_{1}} & 0 & x_{0} \ar@{~>}@/_{1pc}/[ll]_{\textstyle \Gamma_{0}}}$$
\vspace{-10pt}\\
One can deduce the monodromy action of $[\gamma]$ on the first level $Q_{0}^{-1}(t)$, that is $[\gamma]x_{0}=x_{1}$ and $[\gamma]x_{1}=x_{0}$. Equivalently the action on $\mathcal{E}^{1}=\{0,1\}$ is given by $[\gamma]0=1$ and $[\gamma]1=0$.
Lifting the loop $\gamma$ by $Q_{0}^{2}:z\mapsto z^{4}$ gives
\vspace{-15pt}\\
$$\xymatrix{ & x_{10} \ar@{~>}@/_{1pc}/[ld]_{\textstyle \Gamma_{10}} & \\ x_{01} \ar@{~>}@/_{1pc}/[rd]_{\textstyle \Gamma_{01}} & 0 & x_{00} \ar@{~>}@/_{1pc}/[lu]_{\textstyle \Gamma_{00}} \\ & x_{11} \ar@{~>}@/_{1pc}/[ru]_{\textstyle \Gamma_{11}} & }$$
One can deduce the action of $[\gamma]$ on the second level $Q_{0}^{-2}(t)$ or equivalently on $\mathcal{E}^{2}=\{00,10,01,11\}$, that is $[\gamma]00=10$, $[\gamma]01=01$, $[\gamma]01=11$ and $[\gamma]11=00$.
\end{example}

More generally, it turns out that the $p^{n}$-lifts of any loop $\gamma$ are needed to compute the monodromy action of $[\gamma]$ on the $n$-th level. However the following lemma gives a recursive way to compute the monodromy action that only uses the $p$-lifts of $\gamma$.

\begin{lem}\label{LemComputation}
Let $[\gamma]$ be a homotopy class in $\pi_{1}(\mathcal{M},t)$ and $w$ be a word in $\mathcal{E}^{\star}$. For every letter $\varepsilon\in \mathcal{E}$, denote by $\Gamma_{\varepsilon}$ the $p$-lift of $\gamma$ from $x_{\varepsilon}$. Then, for any labeling choice $(L)$,
$$[\gamma]\varepsilon w=\Big([\gamma]\varepsilon\Big)\Big(\left[\ell_{\varepsilon}.\Gamma_{\varepsilon}.\ell_{[\gamma]\varepsilon}^{-1}\right]w\Big)$$
The following graph depicts the concatenation of paths $\ell_{\varepsilon}.\Gamma_{\varepsilon}.\ell_{[\gamma]\varepsilon}^{-1}$.
$$\xymatrix @!0 @R=2pc @C=4pc { x_{\varepsilon} \ar@{~>}[dd]_{\textstyle \Gamma_{\varepsilon}} && \\ && t \ar@{~>}@(ur,dr)[]^{\textstyle [\gamma]}\ar@{~>}[llu]_{\textstyle \ell_{\varepsilon}}\ar@{~>}[lld]^-{\textstyle \ell_{[\gamma]\varepsilon}} \\ [\gamma]x_{\varepsilon}=x_{[\gamma]\varepsilon} && }$$
 In particular, it is a loop with base point $t$ and the homotopy class $\left[\ell_{\varepsilon}.\Gamma_{\varepsilon}.\ell_{[\gamma]\varepsilon}^{-1}\right]$ is well defined in $\pi_{1}(\mathcal{M},t)$.
\end{lem}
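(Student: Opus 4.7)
Set $n=|w|$ and denote by $\Gamma_{\varepsilon w}$ the $p^{n+1}$-lift of $\gamma$ from $x_{\varepsilon w}$; by Definition \ref{DefMonodromyAction} the label $[\gamma](\varepsilon w)$ is, by construction, the label of $\Gamma_{\varepsilon w}(1)$. The plan is to identify that endpoint with $x_{([\gamma]\varepsilon)([\gamma']w)}$, where $\gamma'=\ell_{\varepsilon}.\Gamma_{\varepsilon}.\ell_{[\gamma]\varepsilon}^{-1}$. First, I check that $\gamma'$ is a genuine loop based at $t$: $\ell_{\varepsilon}$ goes from $t$ to $x_{\varepsilon}$, $\Gamma_{\varepsilon}$ goes from $x_{\varepsilon}$ to $x_{[\gamma]\varepsilon}$ by Proposition \ref{PropLift}(3) applied on the first level, and $\ell_{[\gamma]\varepsilon}^{-1}$ goes from $x_{[\gamma]\varepsilon}$ back to $t$. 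So $[\gamma']\in\pi_{1}(\mathcal{M},t)$ is well defined and $[\gamma']w$ makes sense.

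The key observation is to factor $p^{n+1}=p\circ p^{n}$ where now the second $p^{n}$ is read as a partial self-covering $\mathcal{M}_{n+1}\to\mathcal{M}_{1}$ (since $\mathcal{M}_{n+1}=p^{-n}(\mathcal{M}_{1})$). Because $\Gamma_{\varepsilon}$ is itself the $p$-lift of $\gamma$ from $p^{n}(x_{\varepsilon w})=x_{\varepsilon}$, uniqueness in Proposition \ref{PropLift}(1) gives that $\Gamma_{\varepsilon w}$ coincides with the $p^{n}$-lift of $\Gamma_{\varepsilon}$ from $x_{\varepsilon w}$. This is precisely the bridge between the first-level lift $\Gamma_{\varepsilon}$ (which carries the combinatorics of $[\gamma]\varepsilon$) and the deeper lift $\Gamma_{\varepsilon w}$ (which carries the combinatorics of $[\gamma](\varepsilon w)$).

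Next, I compute the $p^{n}$-lift of the loop $\gamma'$ starting at $x_{w}$ by concatenating the $p^{n}$-lifts of its three pieces. The $p^{n}$-lift of $\ell_{\varepsilon}$ from $x_{w}$ ends at $x_{\varepsilon w}$ by Definition \ref{DefLeftTree}; the $p^{n}$-lift of $\Gamma_{\varepsilon}$ from $x_{\varepsilon w}$ is, by the previous paragraph, $\Gamma_{\varepsilon w}$, ending at a point $y=\Gamma_{\varepsilon w}(1)$; the $p^{n}$-lift of $\ell_{[\gamma]\varepsilon}^{-1}$ from $y$ ends at some point $z$, which by Definition \ref{DefMonodromyAction} is $x_{[\gamma']w}$. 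Reversing the last piece, $y$ is the endpoint of the $p^{n}$-lift of $\ell_{[\gamma]\varepsilon}$ from $x_{[\gamma']w}$, and Definition \ref{DefLeftTree} then forces $y=x_{([\gamma]\varepsilon)([\gamma']w)}$. Comparing with $y=\Gamma_{\varepsilon w}(1)=x_{[\gamma](\varepsilon w)}$ yields the announced formula.

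The only technical subtlety, which I expect to be the main hurdle when writing the proof cleanly, is the bookkeeping across the tower $\cdots\to\mathcal{M}_{n+1}\to\mathcal{M}_{n}\to\cdots\to\mathcal{M}_{1}\to\mathcal{M}$: one has to recognize $p^{n}$ simultaneously as a covering $\mathcal{M}_{n}\to\mathcal{M}$ (used when lifting $\ell_{\varepsilon}$ and $\ell_{[\gamma]\varepsilon}^{-1}$) and as a covering $\mathcal{M}_{n+1}\to\mathcal{M}_{1}$ (used when lifting $\Gamma_{\varepsilon}$). Everything else is an uniqueness-of-lift argument via Proposition \ref{PropLift}; notably no homotopy needs to be invoked, since the three concatenated pieces are honest paths and only the final assertion is phrased modulo homotopy.
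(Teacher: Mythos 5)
Your proposal is correct and follows essentially the same route as the paper: both lift the loop $\ell_{\varepsilon}.\Gamma_{\varepsilon}.\ell_{[\gamma]\varepsilon}^{-1}$ by $p^{n}$ from $x_{w}$, decompose the lift into the three corresponding pieces, identify the middle piece with the $p^{n+1}$-lift $\Gamma_{\varepsilon w}$ of $\gamma$ via uniqueness of lifts, and match terminal points using Definition \ref{DefLeftTree}. Your write-up merely makes more explicit the factorization $p^{n+1}=p\circ p^{n}$ with $p^{n}:\mathcal{M}_{n+1}\rightarrow\mathcal{M}_{1}$, which the paper leaves implicit.
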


\begin{proof}
Let $\delta$ be the loop $\ell_{\varepsilon}.\Gamma_{\varepsilon}.\ell_{[\gamma]\varepsilon}^{-1}$. Consider the $p^{n}$-lift of $\delta$ from $x_{w}$ and denote by $x_{v}$ its terminal point, that is $v=[\delta]w$. This lift is exactly the concatenation of three paths $\mathscr{L}_{w}^{\varepsilon}.\Gamma_{\varepsilon w}.\big(\mathscr{L}_{v}^{[\gamma]\varepsilon}\big)^{-1}$ where
\begin{itemize}
\item $\mathscr{L}_{w}^{\varepsilon}$ is the $p^{n}$-lift of $\ell_{\varepsilon}$ from $x_{w}$ (whose terminal point is $x_{\varepsilon w}$ from Definition \ref{DefLeftTree})
\item $\Gamma_{\varepsilon w}$ is the $p^{n+1}$-lift of $\gamma$ from $x_{\varepsilon w}$ (whose terminal point is $[\gamma]x_{\varepsilon w}=x_{[\gamma]\varepsilon w}$)
\item $\mathscr{L}_{v}^{[\gamma]\varepsilon}$ is the $p^{n}$-lift of $\ell_{[\gamma]\varepsilon}$ from $x_{v}$ (whose terminal point is $x_{([\gamma]\varepsilon)v}$ from Definition \ref{DefLeftTree})
\end{itemize}
$$\xymatrix{ x_{\varepsilon w} \ar@{~>}[dd]_{\textstyle \Gamma_{\varepsilon w}} &&& x_{w} \ar@{~>}[lll]_{\textstyle \mathscr{L}_{w}^{\varepsilon}} \\ &&& \\ x_{[\gamma]\varepsilon w}=x_{([\gamma]\varepsilon)v} &&& x_{v} \ar@{~>}[lll]^-{\textstyle \mathscr{L}_{v}^{[\gamma]\varepsilon}} }$$
In particular $\Gamma_{\varepsilon w}$ and $\mathscr{L}_{v}^{[\gamma]\varepsilon}$ have the same terminal point, and thus $[\gamma]\varepsilon w=([\gamma]\varepsilon)v=([\gamma]\varepsilon)([\delta]w)$.
\end{proof}

\begin{example}\quad\\
Go further with the partial self-covering $Q_{0}:\C\backslash\{0\}\rightarrow\C\backslash\{0\},\ z\mapsto z^{2}$ using the same labeling choice as before. Recall that the $Q_{0}$-lifts $\Gamma_{0},\Gamma_{1}$ of $\gamma$ are
$$\xymatrix{ x_{1} \ar@{~>}@/_{1pc}/[rr]_{\textstyle \Gamma_{1}} & 0 & x_{0} \ar@{~>}@/_{1pc}/[ll]_{\textstyle \Gamma_{0}} \ar@{~>}@/_{3pc}/[ll]_{\textstyle \ell_{1}} \ar@{~>}@(dr,ur)[]_{\textstyle \ell_{0}} }$$
The loop $\ell_{0}.\Gamma_{0}.\ell_{[\gamma]0}^{-1}=\ell_{0}.\Gamma_{0}.\ell_{1}^{-1}$ is homotopic to the constant loop at base point $t$ and the loop $\ell_{1}.\Gamma_{1}.\ell_{[\gamma]1}^{-1}=\ell_{1}.\Gamma_{1}.\ell_{0}^{-1}$ is homotopic to $\gamma$. It follows from Lemma \ref{LemComputation} that
$$\forall w\in \mathcal{E}^{\star},\quad[\gamma]0w=1w\quad\text{and}\quad[\gamma]1w=0\left([\gamma]w\right)$$
Therefore the tree automorphism $g=(w\mapsto[\gamma]w)\in\Aut(T_{2})$ may be described as the wreath recursion $g=(0,1)\recursion[\Id,g]$ which is the adding machine on $T_{2}$, namely the process of adding one to a binary integer (see Section \ref{SecTreeAutomorphism}). One can depict this monodromy action on every vertex of the tree of preimages $T(Q_{0},1)$ as follows
$$\xymatrix @!0 @R=0.4pc @C=9pc {
&&&& \\
&&& x_{000} \ar@{-}[ru]\ar@{-}[rd] \ar@{~>}@/_{3pc}/[dddddddddddddddd] & \\
&&&& \\
&& x_{00}=1 \ar@{-}[ruu]\ar@{-}[rdd] \ar@{~>}@/_{2pc}/[dddddddddddddddd] && \\
&&&& \\
&&& x_{001} \ar@{-}[ru]\ar@{-}[rd] \ar@{~>}@/_{3pc}/[dddddddddddddddd] & \\
&&&& \\
& x_{0}=1 \ar@{-}[ruuuu]\ar@{-}[rdddd] \ar@{~>}@/_{1pc}/[dddddddddddddddd] &&& \\
&&&& \\
&&& x_{010} \ar@{-}[ru]\ar@{-}[rd] \ar@{~>}@/_{3pc}/[dddddddddddddddd] & \\
&&&& \\
&& x_{01}=-1 \ar@{-}[ruu]\ar@{-}[rdd] \ar@{~>}@/_{2pc}/[dddddddddddddddd] && \\
&&&& \\
&&& x_{011} \ar@{-}[ru]\ar@{-}[rd] \ar@{~>}@/_{3pc}/[dddddddddddddddd] & \\
&&&& \\
t=1 \ar@{-}[ruuuuuuuu]\ar@{-}[rdddddddd] &&&& \hspace{30pt}\dots \\
&&&& \\
&&& x_{100} \ar@{-}[ru]\ar@{-}[rd] \ar@{~>}@/_{1pc}/[uuuuuuuu] & \\
&&&& \\
&& x_{10}=i \ar@{-}[ruu]\ar@{-}[rdd] \ar@{~>}@/_{1pc}/[uuuuuuuu] && \\
&&&& \\
&&& x_{101} \ar@{-}[ru]\ar@{-}[rd] \ar@{~>}@/_{1pc}/[uuuuuuuu] & \\
&&&& \\
& x_{1}=-1 \ar@{-}[ruuuu]\ar@{-}[rdddd] \ar@{~>}@/_{1pc}/[uuuuuuuuuuuuuuuu] &&& \\
&&&& \\
&&& x_{110} \ar@{-}[ru]\ar@{-}[rd] \ar@{~>}@/_{3pc}/[uuuuuuuuuuuuuuuuuuuu] & \\
&&&& \\
&& x_{11}=-i \ar@{-}[ruu]\ar@{-}[rdd] \ar@{~>}@/_{2pc}/[uuuuuuuuuuuuuuuuuuuuuuuu] && \\
&&&& \\
&&& x_{111} \ar@{-}[ru]\ar@{-}[rd] \ar@{~>}@/_{4pc}/[uuuuuuuuuuuuuuuuuuuuuuuuuuuu] & \\
&&&&
}$$
Remark that the monodromy action of $[\gamma]$ actually acts by tree automorphism on $T(Q_{0},1)$. In particular it is edge preserving (whatever the labeling of all vertices) as it is shown in the graph above. 
\end{example}

\newpage

The previous remark can be generalized for any monodromy action.

\begin{thm}\label{TheoTreeAction}
The monodromy action acts by tree automorphisms on the tree of preimages $T(p,t)$. Equivalently speaking, the monodromy action may be seen as the following group homomorphism.
$$\Phi:\pi_{1}(\mathcal{M},t)\rightarrow\Aut\left(T(p,t)\right),\ [\gamma]\mapsto\Big(\Phi_{[\gamma]}:x\mapsto[\gamma]x\Big)$$
\end{thm}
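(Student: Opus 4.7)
The plan is to verify the two axioms for a tree automorphism (level preserving and edge preserving) for each map $\Phi_{[\gamma]}$, together with checking that $\Phi_{[\gamma]}$ is a bijection. The level preserving axiom is essentially free: Proposition \ref{PropLift} already asserts that $\pi_{1}(\mathcal{M},t)$ permutes each preimage set $p^{-n}(t)$, so $\Phi_{[\gamma]}(p^{-n}(t))=p^{-n}(t)$ for every $n\geqslant 0$. Bijectivity on the whole vertex set then follows because $\Phi$ is a group homomorphism: $\Phi_{[\gamma]^{-1}}$ provides an inverse to $\Phi_{[\gamma]}$ level by level.

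The substantive content is the edge preserving axiom. By Definition \ref{DefRightTree}, the edges of $T(p,t)$ are exactly the pairs $\{p(x),x\}$ with $x\in p^{-(n+1)}(t)$ and $p(x)\in p^{-n}(t)$. So I need to show the commutation relation
$$\forall n\geqslant 0,\ \forall x\in p^{-(n+1)}(t),\quad p\bigl([\gamma]x\bigr)=[\gamma]p(x),$$
for then $\{p(x),x\}$ edge in $T(p,t)$ implies $\{p([\gamma]x),[\gamma]x\}=\{[\gamma]p(x),[\gamma]x\}$ is again an edge.

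The key step is to produce this identity from the uniqueness clause of the path lifting property. First I would fix $x\in p^{-(n+1)}(t)$ and let $\Gamma_{x}$ denote the $p^{n+1}$-lift of $\gamma$ from $x$, so that by Proposition \ref{PropLift} the action is given by $[\gamma]x=\Gamma_{x}(1)$. Then I would consider the path $p\circ\Gamma_{x}$, which starts at $p(x)\in p^{-n}(t)$ and lies in $\mathcal{M}_{n}$. Composing with $p^{n}$ gives
$$p^{n}\circ(p\circ\Gamma_{x})=p^{n+1}\circ\Gamma_{x}=\gamma,$$
so $p\circ\Gamma_{x}$ is a $p^{n}$-lift of $\gamma$ starting at $p(x)$. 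By the uniqueness of such lifts, $p\circ\Gamma_{x}=\Gamma_{p(x)}$, the canonical $p^{n}$-lift of $\gamma$ from $p(x)$. Evaluating at $1$ gives $p([\gamma]x)=p(\Gamma_{x}(1))=\Gamma_{p(x)}(1)=[\gamma]p(x)$, which is the required identity.

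Combining the three ingredients, each $\Phi_{[\gamma]}$ is a level preserving, edge preserving bijection of the vertex set of $T(p,t)$, hence an element of $\Aut(T(p,t))$; and the homomorphism property of $\Phi$ is inherited from its role as an action. The only conceptual obstacle is the commutation step $p\circ\Gamma_{x}=\Gamma_{p(x)}$, but this is an immediate consequence of the uniqueness part of Proposition \ref{PropLift} once one notices that $p\circ\Gamma_{x}$ qualifies as a $p^{n}$-lift of $\gamma$.
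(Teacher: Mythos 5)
Your proof is correct and follows essentially the same route as the paper: both reduce the statement to the edge-preserving property and establish the commutation $p([\gamma]x)=[\gamma]p(x)$ by identifying $p\circ\Gamma_{x}$ with $\Gamma_{p(x)}$ via the (uniqueness of the) lifting property of Proposition \ref{PropLift}. Your write-up merely makes explicit the uniqueness argument and the bijectivity/level-preservation checks that the paper delegates to Definition \ref{DefMonodromyAction} and Proposition \ref{PropLift}.
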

This result motivates the introduction of the right-hand tree of preimages $T(p,t)$. Indeed the monodromy action is not necessarily edge preserving on the left-hand tree of preimages $T^{(L)}(p,t)$.
\begin{proof}
Let $[\gamma]$ be a homotopy class in $\pi_{1}(\mathcal{M},t)$. If follows from Definition \ref{DefMonodromyAction} and Proposition \ref{PropLift} that $\Phi_{[\gamma]}$ is level preserving. So one only needs to check that $\Phi_{[\gamma]}$ is furthermore edge preserving.

Let $\{p(x),x\}$ be an edge of $T(p,t)$ where $x\in p^{-(n+1)}(t)$ for some $n\geqslant 0$. Let $\Gamma_{p(x)}$ be the $p^{n}$-lift of $\gamma$ from $p(x)$ (whose terminal point is $[\gamma]p(x))$ and $\Gamma_{x}$ be the $p^{n+1}$-lift of $\gamma$ from $x$ (whose terminal point is $[\gamma]x$). Since $\Gamma_{x}$ is a $p$-lift of $\Gamma_{p(x)}$, it follows that $p([\gamma]x)=[\gamma]p(x)$ and thus $\{[\gamma]p(x),[\gamma]x\}$, which is also equal to $\{\Phi_{[\gamma]}(p(x)),\Phi_{[\gamma]}(x)\}$, is an edge of $T(p,t)$.
$$\xymatrix{p(x) \ar@{~>}[dd]_{\textstyle \Gamma_{p(x)}} &&& x \ar[lll]_{\textstyle p} \ar@{~>}[dd]^{\textstyle \Gamma_{x}} \\ &&& \\ [\gamma]p(x)=p([\gamma]x)  &&& [\gamma]x \ar[lll]^-{\textstyle p}}$$
\end{proof}

The monodromy action may also be seen as a group homomorphism from $\pi_{1}(\mathcal{M},t)$ into $\Aut(T_{d})$. In this case, its image is well defined for a given labeling choice $(L)$ or up to conjugation by a tree automorphism of the form $\varphi^{(L),(L')}=\varphi^{(L)}\circ\left(\varphi^{(L')}\right)^{-1}$ for any pair of labeling choices $(L)$ and $(L')$.

However these group homomorphisms are in general not injective or equivalently, in terms of group action, the monodromy action is in general not faithful.

\subsection{Definition}

\begin{defn}\label{DefIteratedMonodromyGroup}
The iterated monodromy group of the degree $d\geqslant 2$ partial self-covering $p:\mathcal{M}_{1}\rightarrow\mathcal{M}$ with base point $t\in\mathcal{M}$ is defined to be
$$\IMG(p,t)=\pi_{1}(\mathcal{M},t)/\Ker(\Phi)\quad\text{where}\quad\Ker(\Phi)=\Big\{[\gamma]\in\pi_{1}(\mathcal{M},t)\ /\ \forall x\in\bigsqcup_{n\geqslant 0} p^{-n}(t),\ [\gamma]x=x\Big\}$$
Equivalently speaking, it may be seen as
\begin{itemize}
\item the image of the monodromy action in $\Aut(T(p,t))$ which induces a faithful action by tree automorphisms on the tree of preimages $T(p,t)$
\item the following subgroup of $\Aut(T_{d})$
$$\IMG(p,t)=\Big\{(w\mapsto[\gamma]w)\in\Aut(T_{d})\ /\ [\gamma]\in\pi_{1}(\mathcal{M},t)\Big\}$$
which is defined for a given labeling choice $(L)$
\end{itemize}
\end{defn}

Recall that, up to group isomorphism, the fundamental group $\pi_{1}(\mathcal{M},t)$ does not depend on the choice of base point $t\in\mathcal{M}$. The same obviously holds, up to tree isomorphism, for the tree of preimages $T(p,t)$ as well. Consequently, up to group isomorphism, the iterated monodromy group $\IMG(p,t)$ only depends on the partial self-covering $p:\mathcal{M}_{1}\rightarrow\mathcal{M}$.

The definition of the iterated monodromy group $\IMG(p,t)$ as a subgroup of $\Aut(T_{d})$ depends on a labeling choice $(L)$. Recall that another labeling choice $(L')$ induces a monodromy action on $T_{d}$ which is conjugate to that one coming from $(L)$ by the map $\varphi^{(L),(L')}=\varphi^{(L)}\circ\left(\varphi^{(L')}\right)^{-1}\in\Aut(T_{d})$. Therefore the iterated monodromy group $\IMG(p,t)$ is well defined as subgroup of $\Aut(T_{d})$ up to conjugation by a tree automorphism of the form $\varphi^{(L),(L')}$ for any pair of labeling choices $(L)$ and $(L')$. In particular, it is also well defined up to conjugation by a tree automorphism of $T_{d}$ (or equivalently up to post-composition with an inner automorphism of $\Aut(T_{d})$), but there is then a loss of information since $\Aut(T_{d})$ is much bigger than its subgroup of all maps of the form $\varphi^{(L),(L')}$ for any pair of labeling choices $(L)$ and $(L')$.

\begin{example}\quad\\
Back to the partial self-covering $Q_{0}:\C\backslash\{0\}\rightarrow\C\backslash\{0\},\ z\mapsto z^{2}$. Recall that $\pi_{1}(\C\backslash\{0\},1)$ is the infinite cyclic group generated by $[\gamma]$ and that $[\gamma]$ acts as the adding machine $g=(0,1)\recursion[\Id,g]$. In particular $[\gamma]$ acts as a cyclic permutation of order $2^{n}$ on the set of all vertices of level $n$, and thus the kernel of the monodromy action on the $n$-th level is $K_{n}=\langle[\gamma^{2^{n}}]\rangle$. It follows that $\Ker(\Phi)=\bigcap_{n\geqslant 0}K_{n}$ only contains the identity element and the monodromy action is faithful. Finally $\IMG(Q_{0},1)$ is isomorphic to $\pi_{1}(\C\backslash\{0\},1)$, that is isomorphic to $\Z$.
\end{example}

The following result deals with one of the many remarkable properties satisfied by iterated monodromy groups.

\begin{thm}[Nekrashevych]
The iterated monodromy group $\IMG(p,t)$ seen as a subgroup of $\Aut(T_{d})$ (for any given labeling choice $(L)$) is a self-similar group.
\end{thm}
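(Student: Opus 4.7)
The plan is to verify the self-similarity condition directly from Lemma~\ref{LemComputation}, which already packages the key lifting computation. By the reduction observed after Definition~\ref{DefSelfSimilar}, it suffices to check that for every $[\gamma] \in \pi_1(\mathcal{M},t)$ and every letter $\varepsilon \in \mathcal{E}$, the renormalization $\mathcal{R}_\varepsilon\bigl(\Phi^{(L)}_{[\gamma]}\bigr)$ still lies in $\IMG(p,t)$. So I would fix a labeling choice $(L)$ and an arbitrary generator $\Phi^{(L)}_{[\gamma]}$ of $\IMG(p,t)$, then compute its wreath recursion.

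The key observation is that Lemma~\ref{LemComputation} states precisely
$$[\gamma]\varepsilon w = \bigl([\gamma]\varepsilon\bigr)\Bigl(\bigl[\ell_\varepsilon . \Gamma_\varepsilon . \ell_{[\gamma]\varepsilon}^{-1}\bigr]w\Bigr),$$
which has exactly the shape $g(\varepsilon w) = \bigl(g|_{\mathcal{E}^1}(\varepsilon)\bigr)\bigl(\mathcal{R}_\varepsilon(g)(w)\bigr)$ dictated by the wreath recursion. Comparing the two expressions and invoking the uniqueness of the decomposition in Definition~\ref{DefWreathRecursion}, I can read off the root permutation as $\sigma_{\Phi^{(L)}_{[\gamma]}}(\varepsilon) = [\gamma]\varepsilon$ and, more importantly, identify the renormalization as
$$\mathcal{R}_\varepsilon\bigl(\Phi^{(L)}_{[\gamma]}\bigr) = \Phi^{(L)}_{[\delta_\varepsilon]}, \qquad \text{where } \delta_\varepsilon = \ell_\varepsilon . \Gamma_\varepsilon . \ell_{[\gamma]\varepsilon}^{-1}.$$

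Now the crucial point — and the only thing one really needs to check beyond bookkeeping — is that $\delta_\varepsilon$ is a genuine loop based at $t$, so that $[\delta_\varepsilon]$ lives in $\pi_1(\mathcal{M},t)$. This follows from the second clause of Lemma~\ref{LemComputation}: the path $\ell_\varepsilon$ starts at $t$ and ends at $x_\varepsilon$, then $\Gamma_\varepsilon$ (the $p$-lift of $\gamma$ from $x_\varepsilon$) goes from $x_\varepsilon$ to $x_{[\gamma]\varepsilon} = [\gamma]x_\varepsilon$, and finally $\ell_{[\gamma]\varepsilon}^{-1}$ brings us back to $t$. Hence $[\delta_\varepsilon] \in \pi_1(\mathcal{M},t)$ and therefore $\Phi^{(L)}_{[\delta_\varepsilon]} \in \IMG(p,t)$ by definition.

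Combining the two displayed equations, $\mathcal{R}_\varepsilon\bigl(\Phi^{(L)}_{[\gamma]}\bigr) \in \IMG(p,t)$ for every $\varepsilon \in \mathcal{E}$ and every $[\gamma] \in \pi_1(\mathcal{M},t)$, which is the self-similarity condition. I do not expect any serious obstacle here, since all the analytic content has been absorbed into Lemma~\ref{LemComputation}; the only subtle point is being careful about the labeling choice $(L)$, because the very statement of self-similarity is relative to the embedding $\IMG(p,t) \hookrightarrow \Aut(T_d)$ induced by $\varphi^{(L)}$. Once one works consistently inside this embedding, the argument is a single application of the lemma plus the uniqueness of the wreath recursion.
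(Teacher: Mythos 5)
Your proof is correct and follows essentially the same route as the paper: both reduce self-similarity to first-level renormalizations, apply Lemma \ref{LemComputation} to identify $\mathcal{R}_{\varepsilon}\bigl(\Phi^{(L)}_{[\gamma]}\bigr)$ with the monodromy action of the loop $\ell_{\varepsilon}.\Gamma_{\varepsilon}.\ell_{[\gamma]\varepsilon}^{-1}$, and conclude since that loop's homotopy class lies in $\pi_{1}(\mathcal{M},t)$. Your extra remarks on the uniqueness of the wreath recursion and on the endpoints of the concatenated paths are just the details the paper leaves implicit.
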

\begin{proof}
Recall that a subgroup of $\Aut(T_{d})$ is said to be self-similar if it is invariant under any renormalization (see Definition \ref{DefSelfSimilar}). Furthermore a quick induction shows that one only needs to check that it is invariant under renormalizations at every vertex in the first level.

So let $[\gamma]$ be a homotopy class in $\pi_{1}(\mathcal{M},t)$ seen as a tree automorphism $(w\mapsto[\gamma]w)\in\Aut(T_{d})$ and $\varepsilon$ be a letter in $\mathcal{E}$. For every word $w\in\mathcal{E}^{\star}$, Lemma \ref{LemComputation} gives $[\gamma]\varepsilon w=([\gamma]\varepsilon)([\ell_{\varepsilon}.\Gamma_{\varepsilon}.\ell_{[\gamma]\varepsilon}^{-1}]w)$ with $[\ell_{\varepsilon}.\Gamma_{\varepsilon}.\ell_{[\gamma]\varepsilon}^{-1}]\in\pi_{1}(\mathcal{M},t)$ and therefore
$$\mathcal{R}_{\varepsilon}\Big(w\mapsto[\gamma]w\Big)=\Big(w\mapsto[\ell_{\varepsilon}.\Gamma_{\varepsilon}.\ell_{[\gamma]\varepsilon}^{-1}]w\Big)$$
belongs to the iterated monodromy group $\IMG(p,t)$ making it a self-similar group.
\end{proof}
For further reading about self-similar groups, see \cite{SelfSimilarGroups} and \cite{SelfSimilarity}.

Abusing notation, every tree automorphism $(w\mapsto[\gamma]w)\in\Aut(T_{d})$ induced by some homotopy class $[\gamma]\in\pi_{1}(\mathcal{M},t)$ is often denoted simply by $\gamma$ (instead of $\Phi^{(L)}_{[\gamma]}$) for convenience. As it is shown in the proof above, Lemma \ref{LemComputation} allows to compute efficiently the wreath recursion of every such tree automorphism (see Section \ref{SecTreeAutomorphism}).

\newpage

\subsection{Examples}\label{SecExamples}

\subsubsection*{Basilica group}

Consider the quadratic polynomial $Q_{-1}:z\mapsto z^{2}-1$ whose Julia set, called Basilica, is shown in Figure \ref{FigBasilica1}. Its critical point 0 is periodic of period 2.
$$\xymatrix{ -1 \ar@/_{1pc}/[r]_{1:1} & 0 \ar@/_{1pc}/[l]_{2:1} }$$
That induces a degree 2 partial self-covering $Q_{-1}:\C\backslash\{-1,0,1\}\rightarrow\C\backslash\{-1,0\}$.

\begin{figure}[!h]
\begin{center}
\includegraphics[width=10cm]{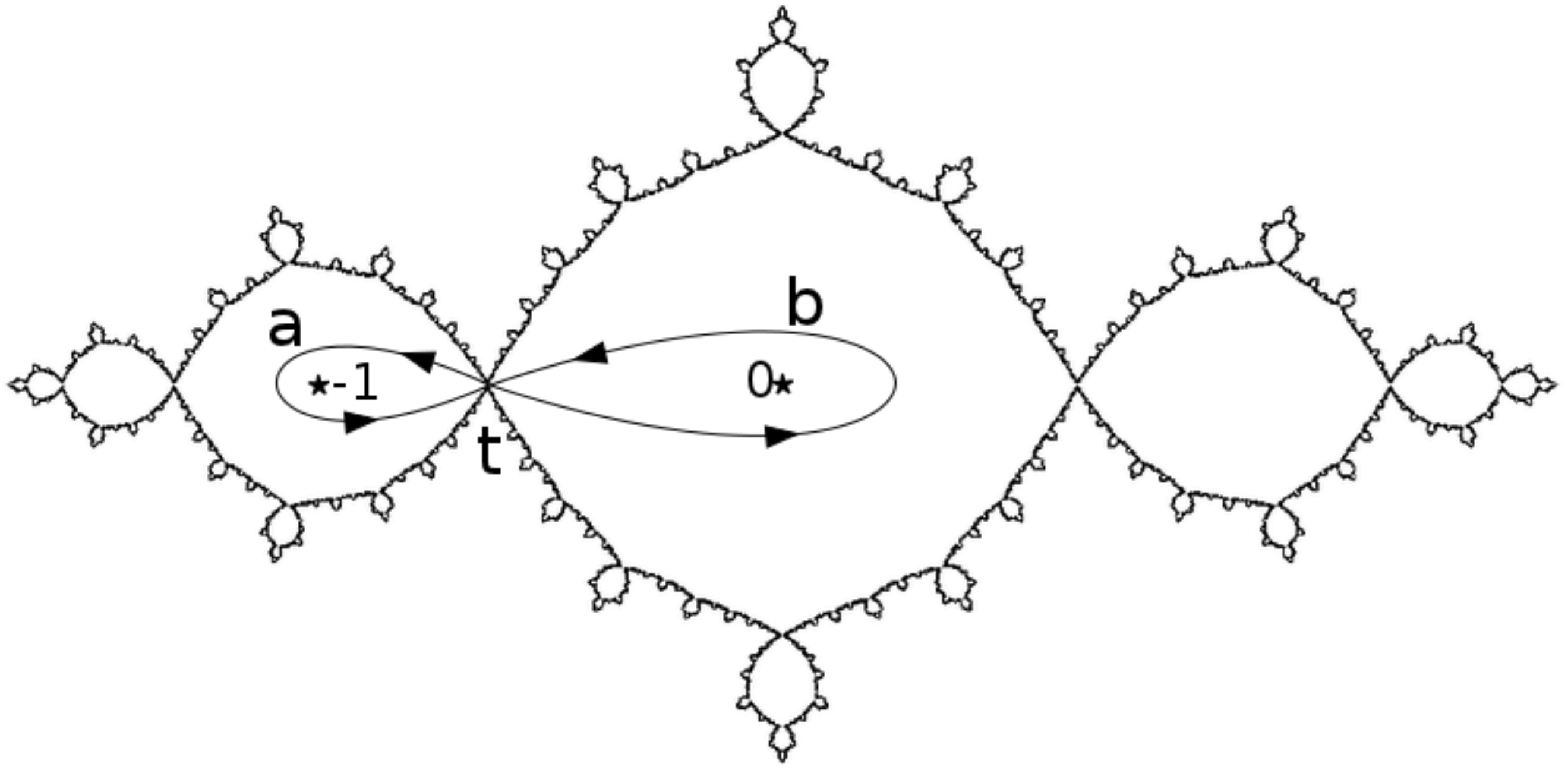}
\caption{The Basilica and two generators of $\pi_{1}(\C\backslash\{-1,0\},t)$}\label{FigBasilica1}
\end{center}
\end{figure}

Choose the fixed point $t=\frac{1-\sqrt{5}}{2}$ as base point. The fundamental group $\pi_{1}(\C\backslash\{-1,0\},t)$ may be described as the free group generated by two homotopy classes $[a],[b]$ where the loop $a$ surrounds the post-critical point $-1$ and the loop $b$ the post-critical point $0$ both in a counterclockwise motion (see Figure \ref{FigBasilica1}).

\begin{figure}[!h]
\begin{center}
\includegraphics[width=10cm]{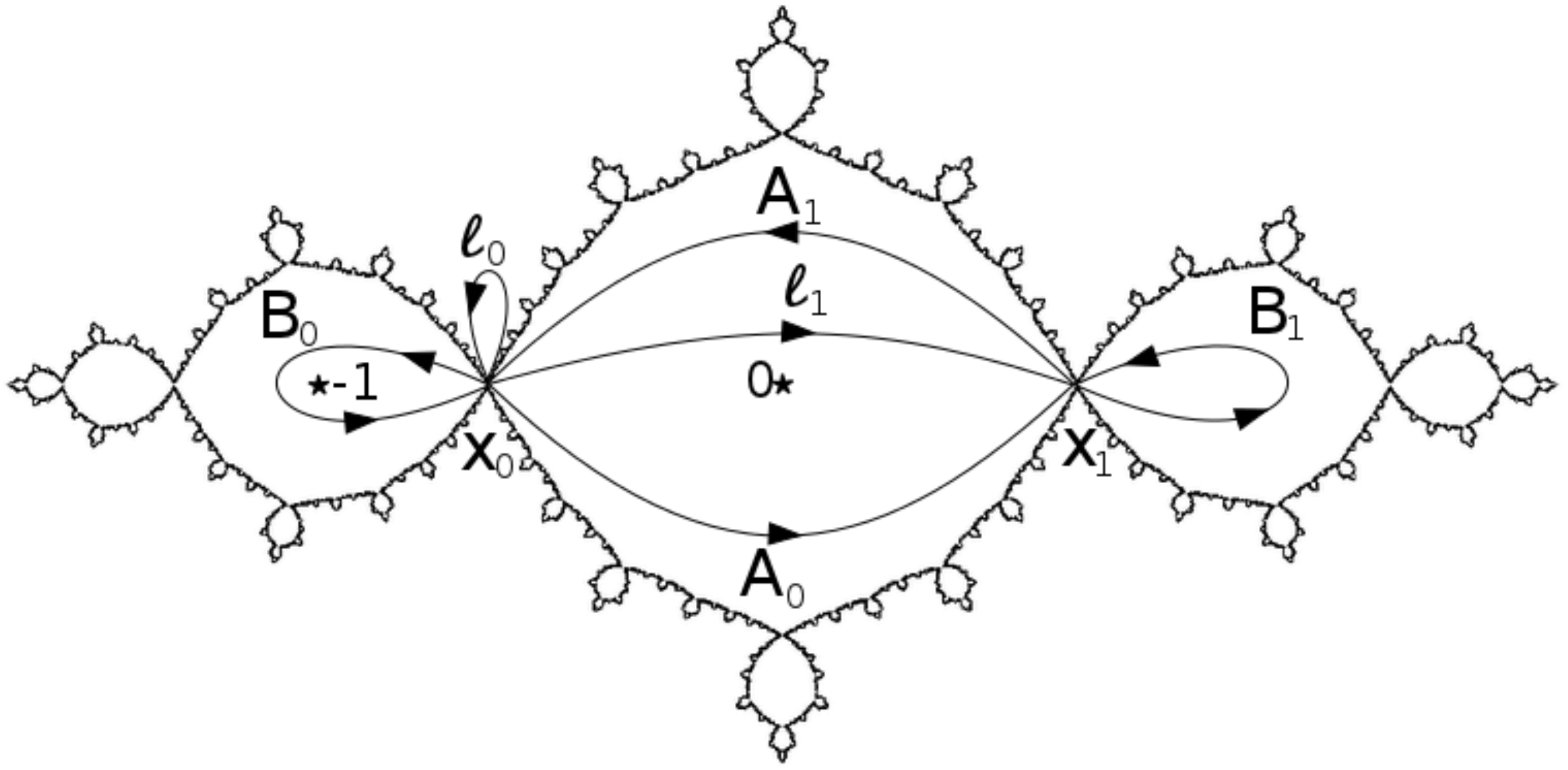}
\caption{The labeling choice $\ell_{0},\ell_{1}$ and the lifts of $a,b$}\label{FigBasilica2}
\end{center}
\end{figure}

Let $x_{0}=t,x_{1}=-t$ be the preimages of $t$ and choose two paths $\ell_{0},\ell_{1}$ from $t$ to $x_{0},x_{1}$ as it is shown in Figure \ref{FigBasilica2}. This picture also depicts the lifts of the loops $a$ and $b$. In particular, one can deduce the monodromy action of $[a]$ and $[b]$ on the first level.
$$\left\{\begin{array}{l} [a]0=1 \\ {[a]}1=0 \end{array}\right.\quad\text{and}\quad\left\{\begin{array}{l} [b]0=0 \\ {[b]}1=1 \end{array}\right.$$
Furthermore
$$\left\{\begin{array}{l} [\ell_{0}.A_{0}.\ell_{1}^{-1}]=[b] \\ {[\ell_{1}.A_{1}.\ell_{0}^{-1}]}=[1_{t}] \end{array}\right.\quad\text{and}\quad\left\{\begin{array}{l} [\ell_{0}.B_{0}.\ell_{0}^{-1}]=[a] \\ {[\ell_{1}.B_{1}.\ell_{1}^{-1}]}=[1_{t}] \end{array}\right.$$
where $[1_{t}]$ is the homotopy class of the constant loop at base point $t$ (that is the identity element of the fundamental group $\pi_{1}(\C\backslash\{-1,0\},t)$). It follows from Lemma \ref{LemComputation} that
$$\forall w\in \mathcal{E}^{\star},\quad\left\{\begin{array}{l} [a]0w=1([b]w) \\ {[a]}1w=0w \end{array}\right.\quad\text{and}\quad\left\{\begin{array}{l} [b]0w=0([a]w) \\ {[b]}1w=1w \end{array}\right.$$

Therefore the iterated monodromy group of $Q_{-1}$ seen as a subgroup of $\Aut(T_{2})$ is generated by the following wreath recursions
$$\IMG(Q_{-1},t)=\Big\langle a=(0,1)\recursion[b,\Id],b=\recursion[a,\Id]\Big\rangle$$
This group is called the Basilica group. It is not isomorphic to the free group on a set of two elements. Indeed it follows from Lemma \ref{LemComputationsWreathRecursion} that 

\begin{center}
\begin{tabular}{|c|c|c|c|}
\hline
$a=(0,1)\recursion[b,\Id]$ & $b=\recursion[a,\Id]$ & $a^{-1}=(0,1)\recursion[\Id,b^{-1}]$ & $b^{-1}=\recursion[a^{-1},\Id]$ \\
\hline
$\xymatrix @!0 @R=2pc @C=5pc { 0 \ar[rd]^(0.25){\textstyle b} & 0 \\ 1 \ar[ru] & 1 }$ & $\xymatrix @!0 @R=2pc @C=5pc { 0 \ar[r]^{\textstyle a} & 0 \\ 1 \ar[r] & 1 }$ & $\xymatrix @!0 @R=2pc @C=5pc { 0 \ar[rd] & 0 \\ 1 \ar[ru]_(0.25){\textstyle b^{-1}} & 1 }$ & $\xymatrix @!0 @R=2pc @C=5pc { 0 \ar[r]^{\textstyle a^{-1}} & 0 \\ 1 \ar[r] & 1 }$ \\
\hline
\end{tabular}
\end{center}

\noindent And thus the monodromy action of $[b^{-1}.a^{-1}.b^{-1}.a.b.a^{-1}.b.a]$ is given by the following wreath recursion
$$\xymatrix @!0 @R=2pc @C=5pc { 0 \ar[r]^{\textstyle a^{-1}} & 0 \ar[rd] & 0 \ar[r]^{\textstyle a^{-1}} & 0 \ar[rd]^(0.25){\textstyle b} & 0 \ar[r]^{\textstyle a} & 0 \ar[rd] & 0 \ar[r]^{\textstyle a} & 0 \ar[rd]^(0.25){\textstyle b} & 0 \\ 1 \ar[r] & 1 \ar[ru]_(0.25){\textstyle b^{-1}} & 1 \ar[r] & 1 \ar[ru] & 1 \ar[r] & 1 \ar[ru]_(0.25){\textstyle b^{-1}} & 1 \ar[r] & 1 \ar[ru] & 1 }$$
that is after ``untying'' (see Lemma \ref{LemComputationsWreathRecursion})
$$\xymatrix @!0 @R=2pc @C=40pc { 0 \ar[r]^{\textstyle a^{-1}.a=\Id} & 0 \\ 1 \ar[r]^{\textstyle b^{-1}.a^{-1}.b.b^{-1}.a.b=\Id} & 1}$$
Therefore $[b^{-1}.a^{-1}.b^{-1}.a.b.a^{-1}.b.a]\in\Ker(\Phi)$. In particular for every pair of generators of $\IMG(Q_{-1},t)$, the relation $b^{-1}.a^{-1}.b^{-1}.a.b.a^{-1}.b.a=\Id$ implies a relation between these generators. It follows that $\IMG(Q_{-1},t)$ is not isomorphic to the free group on a set of two elements.

Notice that the monodromy action of $Q_{-1}:\C\backslash\{-1,0,1\}\rightarrow\C\backslash\{-1,0\}$ is not faithful. More precisely this group was studied in \cite{TorsionFreeWeaklyBranchGroup} where it was in particular proved that
$$\Ker(\Phi)=\Big\langle[b^{-p}.a^{-p}.b^{-p}.a^{p}.b^{p}.a^{-p}.b^{p}.a^{p}],[a^{-2p}.b^{-p}.a^{-2p}.b^{p}.a^{2p}.b^{-p}.a^{2p}.b^{p}]\ /\ p=2^{j},j\geqslant 0\Big\rangle$$

\newpage

\subsubsection*{Chebyshev polynomials and infinite dihedral group}

Consider the degree $d\geqslant 2$ Chebyshev polynomials defined by $C_{d}:z\mapsto\cos(d\arccos(z))$ or equivalently by the following recursive formula
$$\forall z\in\C,\quad C_{0}(z)=1,\ C_{1}(z)=z\text{ and }C_{d}(z)=2zC_{d-1}(z)-C_{d-2}(z)$$
Its Julia set is the real segment $[-2,2]$. For every $k\in\{1,2,\dots,d-1\}$, the point $c_{k}=cos(\frac{\pi k}{d})$ is a simple critical point and is mapped to $C_{d}(c_{k})=(-1)^{k}$. Moreover $C_{d}(1)=1$, $C_{d}(-1)=(-1)^{d}$, and thus the post-critical set is $\{-1,1\}$. It follows that every Chebyshev polynomial induces a partial self-covering $C_{d}:\C\backslash C_{d}^{-1}(\{-1,1\})\longrightarrow\C\backslash\{-1,1\}$.

Choose $t=0$ as base point. The fundamental group $\pi_{1}(\C\backslash\{-1,1\},t)$ may be described as the free group generated by two homotopy classes $[a],[b]$ where the loop $a$ surrounds the post-critical point $-1$ and the loop $b$ the post-critical point $1$ both in a counterclockwise motion (see the graph below).
\vspace{-10pt}
$$\xymatrix{ &&&& \\ & -1 & t \ar@{~>} `ul[ul] `dl[dl]_{\textstyle a} `dr[] []  \ar@{~>} `dr[dr] `ur[ur] `ul[]_{\textstyle b} [] & 1 & \\ &&&& }$$

The preimages of $t=0$ are $x_{\varepsilon}=\cos(\frac{\pi}{2d}+\frac{\pi\varepsilon}{d})$ where the letter $\varepsilon$ belongs to the alphabet $\mathcal{E}=\{0,1,\dots,d-1\}$. For every letter $\varepsilon\in \mathcal{E}$, let $\ell_{\varepsilon}$ be the straight path from $t$ to $x_{\varepsilon}$. Remark that every real segment $[x_{k+1},x_{k}]$ contains only one critical point, namely $c_{k+1}$, and the restriction of $C_{d}$ on this segment is a double covering map onto $[-1,0]$ for $k$ even and onto $[0,1]$ for $k$ odd.

Each of the loops $a$ and $b$ has exactly $d$ lifts. The pattern of these lifts is depicted in the following graph
\vspace{-10pt}
$$\xymatrix{ &&&&&&&&&&& \\ \dots & x_{4} \ar@{~>}@/_{2pc}/[rr]_{\textstyle B_{4}} & c_{4} & x_{3} \ar@{~>}@/_{2pc}/[ll]_{\textstyle B_{3}} \ar@{~>}@/_{2pc}/[rr]_{\textstyle A_{3}} & c_{3} & x_{2} \ar@{~>}@/_{2pc}/[rr]_{\textstyle B_{2}} \ar@{~>}@/_{2pc}/[ll]_{\textstyle A_{2}} & c_{2} & x_{1} \ar@{~>}@/_{2pc}/[ll]_{\textstyle B_{1}} \ar@{~>}@/_{2pc}/[rr]_{\textstyle A_{1}} & c_{1} & x_{0} \ar@{~>} `dr[dr] `ur[ur] `ul[]_{\textstyle B_{0}} [] \ar@{~>}@/_{2pc}/[ll]_{\textstyle A_{0}} & 1 & \\ &&&&&&&&&&& }$$
and the last lift on left, which is a loop surrounding $-1$, is a lift of $b$ for $d$ even and of $a$ for $d$ odd. It follows from Lemma \ref{LemComputation} that
$$\forall w\in\mathcal{E}^{\star},\quad\left\{\begin{array}{l} [a]0w=1w \\ {[a]}1w=0w \\ {[a]}2w=3w \\ {[a]}3w=2w \\ \dots \end{array}\right.\quad\text{and}\quad\left\{\begin{array}{l} [b]0w=0([b]w) \\ {[b]}1w=2w \\ {[b]}2w=1w \\ {[b]}3w=4w \\ {[b]}4w=3w \\ \dots \end{array}\right.$$
$$\text{with}\quad\left\{\begin{array}{cc} {[b]}(d-1)w=(d-1)([a]w) & \text{if }d\text{ is even} \\ {[a]}(d-1)w=(d-1)([a]w) & \text{if }d\text{ is odd} \end{array}\right.$$

That leads to the following wreath recursions
$$\left\{\begin{array}{cccc} a=\sigma_{a}\recursion[\Id,\Id,\dots,\Id,\Id] & \text{and} & b=\sigma_{b}\recursion[b,\Id,\dots,\Id,a] & \text{if }d\text{ is even} \\ a=\sigma_{a}\recursion[\Id,\Id,\dots,\Id,a] & \text{and} & b=\sigma_{b}\recursion[b,\Id,\dots,\Id,\Id] & \text{if }d\text{ is odd} \end{array}\right.$$
$$\text{where}\quad\sigma_{a}=(0,1)(2,3)\dots\quad\text{and}\quad\sigma_{b}=(1,2)(3,4)\dots$$

Using Lemma \ref{LemComputationsWreathRecursion}, one may compute the wreath recursions $a^{2}$ and $b^{2}$.
$$\left\{\begin{array}{cccc} a^{2}=\recursion[\Id,\Id,\dots,\Id,\Id] & \text{and} & b^{2}=\recursion[b^{2},\Id,\dots,\Id,a^{2}] & \text{if }d\text{ is even} \\ a^{2}=\recursion[\Id,\Id,\dots,\Id,a^{2}] & \text{and} & b=\recursion[b^{2},\Id,\dots,\Id,\Id] & \text{if }d\text{ is odd} \end{array}\right.$$
Applying Lemma \ref{LemIdentityTreeAutomorphism}, it follows that $a^{2}=b^{2}=\Id$, or equivalently, in terms of monodromy action,
$$\Big\langle[a^{2}],[b^{2}]\Big\rangle\subset Ker(\Phi)\subset\Big\langle[a],[b]\Big\rangle=\pi_{1}(\C\backslash\{-1,1\},t)$$
Remark that if $\Ker(\Phi)$ is strictly larger than $\langle[a^{2}],[b^{2}]\rangle$, then $\Ker(\Phi)$ must contain at least one element of the form either $[(a.b)^{j}]$ or $[(b.a)^{j}]$ with $j\geqslant 1$ or of the form either $[a.(b.a)^{j}]$ or $[b.(a.b)^{j}]$ with $j\geqslant 0$. One will prove that is not the case.

The wreath recursion $a.b$ is given by (using Lemma \ref{LemComputationsWreathRecursion})
$$\left\{\begin{array}{cc} a.b=\sigma_{a.b}\recursion[\Id,b,\Id,\dots,\Id,a,\Id] & \text{if }d\text{ is even} \\ a.b=\sigma_{a.b}\recursion[\Id,b,\Id,\dots,\Id,\Id,a] & \text{if }d\text{ is odd} \end{array}\right.$$
where (using circular notation)
\begin{eqnarray*}
\sigma_{a.b} & = & \Big((1,2)(3,4)\dots\Big)\circ\Big((0,1)(2,3)\dots\Big) \\
& = & \left\{\begin{array}{cc} (0,2,4,\dots,d-4,d-2,d-1,d-3,\dots,5,3,1) & \text{if }d\text{ is even} \\ (0,2,4,\dots,d-3,d-1,d-2,d-4,\dots,5,3,1) & \text{if }d\text{ is odd} \end{array}\right.
\end{eqnarray*}
In particular $[a.b]$ acts as a cyclic permutation of order $d$ on the first level of the regular rooted tree $T_{d}$ (since $\sigma_{a.b}$ is of order $d$). Moreover, using Lemma \ref{LemComputationsWreathRecursion} again, it appears that
$$(a.b)^{d}=\recursion[a.b,a.b,a.b,\dots,b.a,b.a,b.a]$$
Recall that $(b.a)=(a.b)^{-1}$ since $a^{2}=b^{2}=\Id$, and thus $[b.a]$ also acts as a cyclic permutation of order $d$ on the first level. Therefore a quick induction implies that $[a.b]$ and $[b.a]$ act as a cyclic permutation of order $d^{n}$ on the $n$-th level of $T_{d}$. It follows that none of the elements of the form either $[(a.b)^{j}]$ or $[(b.a)^{j}]$ with $j\geqslant 1$ acts as the identity tree automorphism. The same holds as well for the elements of the form either $[a.(b.a)^{j}]$ or $[b.(a.b)^{j}]$ with $j\geqslant 0$ since $a$ and $b$ are of order 2 in $\Aut(T_{d})$. Finally

$$\Ker(\Phi)=\Big\langle[a^{2}],[b^{2}]\Big\rangle\quad\text{and}\quad\IMG(C_{d},t)=\pi_{1}(\C\backslash\{-1,1\},t)/\Ker(\Phi)=\Big\langle[a],[b]\Big\rangle/\Big\langle[a^{2}],[b^{2}]\Big\rangle$$

This group is called the infinite dihedral group. It is isomorphic to the isometry group of $\Z$ (for instance the permutations $\alpha\mapsto-\alpha$ and $\alpha\mapsto 1-\alpha$ play the same role as $a$ and $b$).

\newpage

\subsubsection*{The quadratic rational map $z\mapsto\left(\frac{z-1}{z+1}\right)^{2}$}

Consider the quadratic rational map $R:z\mapsto\left(\frac{z-1}{z+1}\right)^{2}$ whose Julia set is shown in Figure \ref{FigQuadraticExample1}. The critical points are $-1$ and $1$. Since $R(-1)=\infty$, $R^{2}(-1)=1$ and $R(1)=0$, $R^{2}(1)=1$, the post-critical set is $\{0,1,\infty\}$.
$$\xymatrix{ -1 \ar@/_{1pc}/[dr]_{2:1} & 0 \ar@/_{1pc}/[r]_{1:1} & 1 \ar@/_{1pc}/[l]_{2:1} \\ & \infty \ar@/_{1pc}/[ur]_{1:1} & }$$
That induces a degree 2 partial self-covering $R:\C\backslash\{-1,0,1\}\longrightarrow\C\backslash\{0,1\}$.

\begin{figure}[!h]
\begin{center}
\includegraphics[width=8cm]{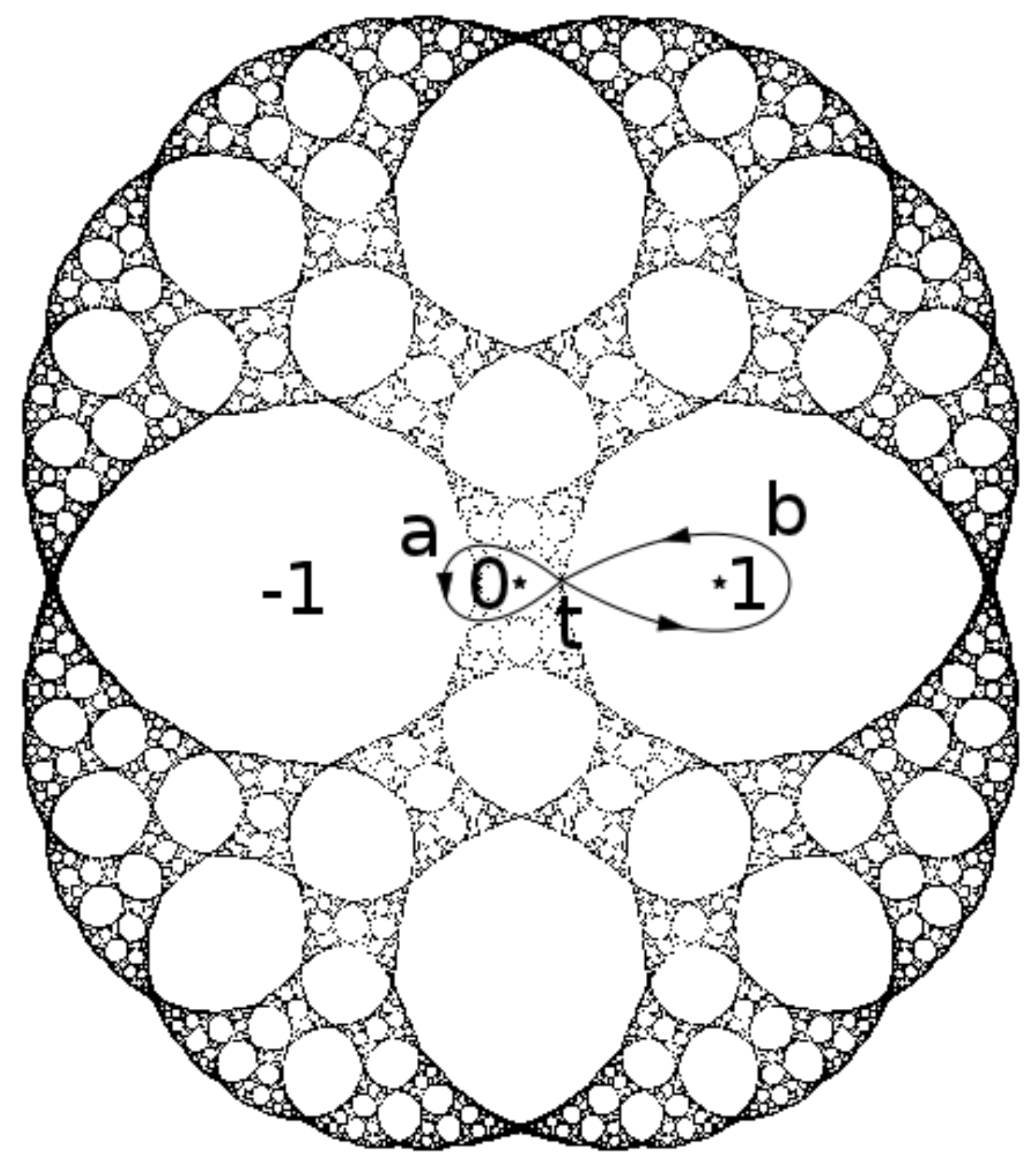}
\caption{The Julia set of $R:z\mapsto\left(\frac{z-1}{z+1}\right)^{2}$ and two generators of $\pi_{1}(\C\backslash\{0,1\},t)$}\label{FigQuadraticExample1}
\end{center}
\end{figure}

Choose the real fixed point $t\approx 0.296$ as base point. The fundamental group $\pi_{1}(\C\backslash\{0,1\},t)$ may be described as the free group generated by two homotopy classes $[a],[b]$ where the loop $a$ surrounds the post-critical point 0 and the loop $b$ the post-critical point 1 both in a counterclockwise motion (see Figure \ref{FigQuadraticExample1}).

\begin{figure}[!h]
\begin{center}
\includegraphics[width=8cm]{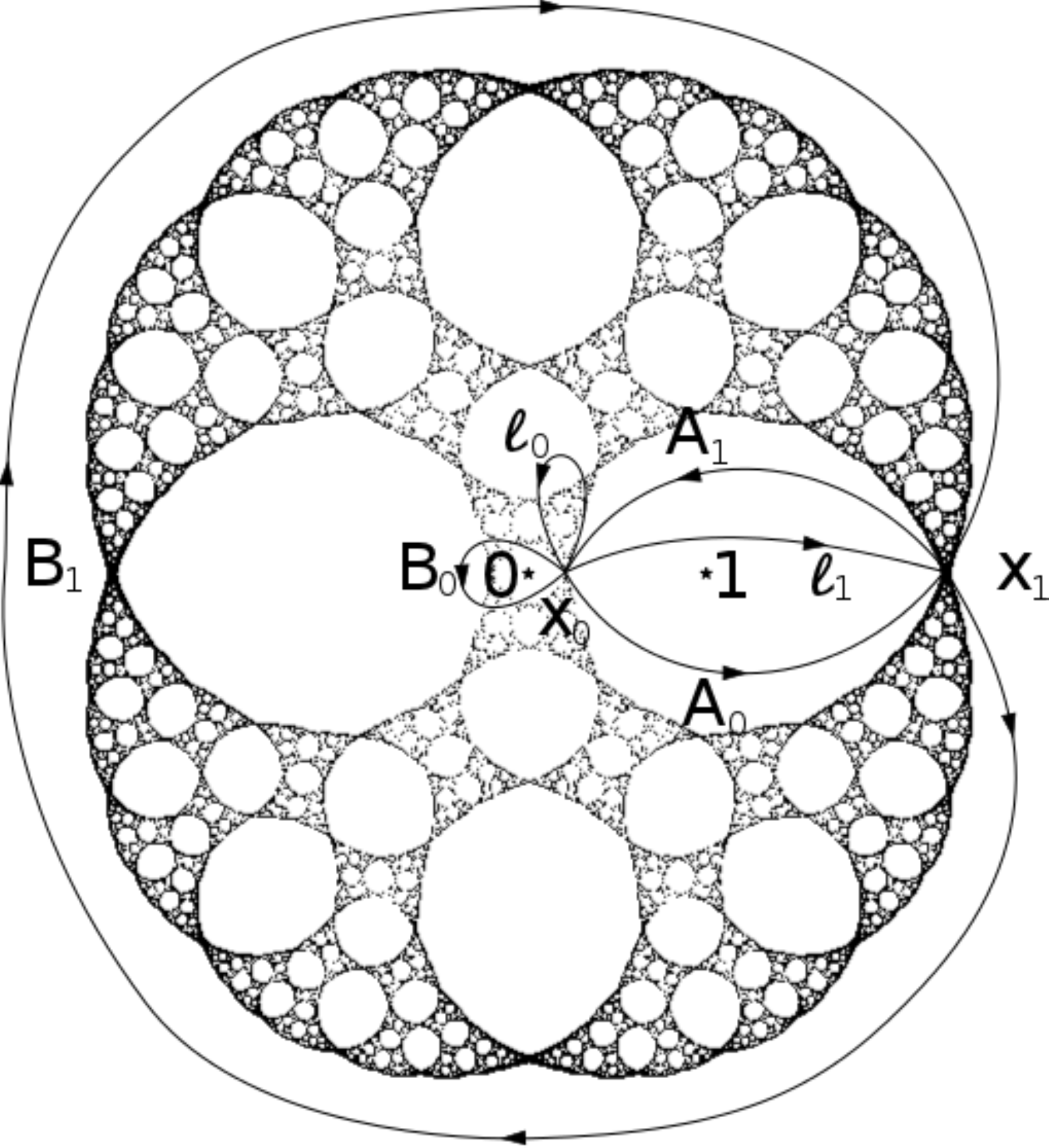}
\caption{The labeling choice $\ell_{0},\ell_{1}$ and the lifts of $a,b$}\label{FigQuadraticExample2}
\end{center}
\end{figure}

Let $x_{0}=t,x_{1}=t^{-1}\approx 3.383$ be the preimages of $t$ and choose two paths $\ell_{0},\ell_{1}$ from $t$ to $x_{0},x_{1}$ as it is shown in Figure \ref{FigQuadraticExample2}. This picture also depicts the lifts of the loops $a$ and $b$. By using Lemma \ref{LemComputation}, one can compute the monodromy action of $[a]$ and $[b]$.
$$\forall w\in \mathcal{E}^{\star},\quad\left\{\begin{array}{l} [a]0w=1([b]w) \\ {[a]}1w=0w \end{array}\right.\quad\text{and}\quad\left\{\begin{array}{l} [b]0w=0([a]w) \\ {[b]}1w=1([b^{-1}.a^{-1}]w) \end{array}\right.$$

Therefore the iterated monodromy group of $R$ seen as a subgroup of $\Aut(T_{2})$ is generated by the following wreath recursions
$$\IMG(R,t)=\Big\langle a=(0,1)\recursion[b,\Id],b=\recursion[a,b^{-1}.a^{-1}]\Big\rangle$$

Notice that these wreath recursions are not convenient to compute with since two generators occur in the renormalization $\mathcal{R}_{1}(b)=b^{-1}.a^{-1}$ and only one renormalization is the identity tree automorphism. However one may find a nicer pair of generators of $\IMG(R,t)$ by taking another pair of generators of $\pi_{1}(\C\backslash\{0,1\},t)$ and another labeling choice.

Indeed consider the tree automorphism $g$ in $\Aut(T_{2})$ whose wreath recursion is given by
$$g=\recursion[g.a,g]$$
This tree automorphism is well defined by induction on the successive levels of the regular rooted tree $T_{2}$. Now consider the following wreath recursions

\begin{center}
\begin{tabular}{ccccccc}
\hline
$a'$ & = & $g.(a.b).g^{-1}$ & = & \parbox[c]{255pt}{$\xymatrix @!0 @R=3pc @C=5pc { 0 \ar[r]^{\textstyle g.a} & 0 \ar[rd]^(0.25){\textstyle b} & 0 \ar[r]^{\textstyle a} & 0 \ar[r]^{\textstyle a^{-1}.g^{-1}} & 0 \\ 1 \ar[r]^{\textstyle g} & 1 \ar[ru] & 1 \ar[r]^{\textstyle b^{-1}.a^{-1}} & 1 \ar[r]^{\textstyle g^{-1}} & 1 }$} && \\
&&&&&& \\
&&& = & \parbox[c]{255pt}{$\xymatrix @!0 @R=3pc @C=20pc {0 \ar[rd]^(0.25){\textstyle \quad g.a.b.b^{-1}.a^{-1}.g^{-1}=\Id} & 0 \\ 1 \ar[ru]_(0.25){\textstyle \quad g.a.a^{-1}.g^{-1}=\Id} & 1 }$} & = & $(0,1)\recursion[\Id,\Id]$ \\
\hline
$b'$ & = & $g.a.g^{-1}$ & = & \parbox[c]{180pt}{$\xymatrix @!0 @R=3pc @C=5pc { 0 \ar[r]^{\textstyle g.a} & 0 \ar[rd]^(0.25){\textstyle b} & 0 \ar[r]^{\textstyle a^{-1}.g^{-1}} & 0 \\ 1 \ar[r]^{\textstyle g} & 1 \ar[ru] & 1 \ar[r]^{\textstyle g^{-1}} & 1 }$} && \\
&&&&&& \\
&&& = & \parbox[c]{180pt}{$\xymatrix @!0 @R=3pc @C=15pc {0 \ar[rd]^(0.25){\textstyle \quad g.a.b.g^{-1}=a'} & 0 \\ 1 \ar[ru]_(0.25){\textstyle \quad g.a^{-1}.g^{-1}=b'^{-1}} & 1 }$} & = & $(0,1)\recursion[a',b'^{-1}]$ \\
\hline
\end{tabular}
\end{center}

\newpage

Remark that $\langle a,b\rangle$ is obviously generated by $a.b$ and $a$. Consequently $\langle a,b\rangle$ and $\langle a',b'\rangle=g.\langle a.b,a\rangle.g^{-1}=g.\langle a,b\rangle.g^{-1}$ are conjugate subgroups in $\Aut(T_{2})$

Recall that the iterated monodromy group $\IMG(R,t)$ seen as a subgroup of $\Aut(T_{2})$ is defined for a given labeling choice. In particular the subgroup $\langle a,b\rangle$ was obtained for the labeling choice, say $(L)$, depicted in Figure \ref{FigQuadraticExample2}. With similar computations, one can show that the labeling choice $(L')$ depicted in Figure \ref{FigQuadraticExample3} gives the subgroup $\langle a',b'\rangle$. In other words, the tree automorphism $g\in\Aut(T_{2})$ corresponds to the map $\varphi^{(L),(L')}=\varphi^{(L)}\circ\left(\varphi^{(L')}\right)^{-1}\in\Aut(T_{2})$ which describes the change of labeling choices from $(L)$ to $(L')$ (see Proposition \ref{PropTreeIsomorphism} and Definition \ref{DefMonodromyAction}).

\begin{figure}[!h]
\begin{center}
\includegraphics[width=8cm]{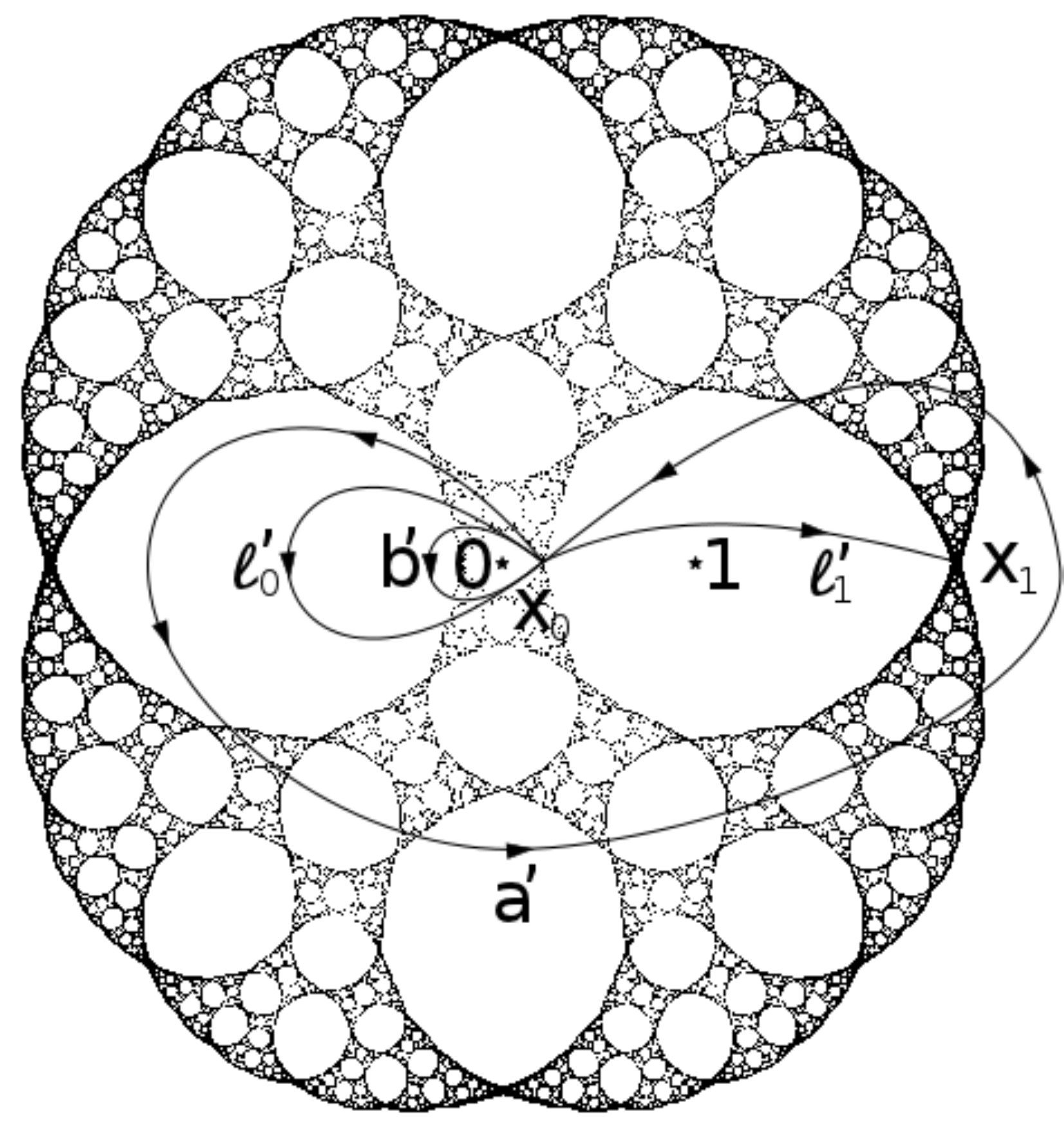}
\caption{The labeling choice $\ell'_{0},\ell'_{1}$ and two generators $[a']=[a.b],[b']=[a]$ of $\pi_{1}(\C\backslash\{0,1\},t)$}\label{FigQuadraticExample3}
\end{center}
\end{figure}

It follows from the labeling choice $(L')$ that
$$\IMG(R,t)=\Big\langle a'=(0,1)\recursion[\Id,\Id],b'=(0,1)\recursion[a',b'^{-1}]\Big\rangle$$
for which the wreath recursions of generators are nicer than for $\langle a,b\rangle$. Indeed, that immediately shows $a'^{2}=\Id$, and thus $\IMG(R,t)$ is not isomorphic to the free group on a set of two elements, or equivalently the monodromy action of $R:\C\backslash\{-1,0,1\}\rightarrow\C\backslash\{0,1\}$ is not faithful.

\newpage

\subsubsection*{Sierpinski gasket and towers of Hanoi (due to Grigorchuk and \u{S}uni\'{c} \cite{SelfSimilarity})}

Consider the cubic rational map $H:z\mapsto z^{2}-\frac{16}{27z}$ whose Julia set is a Sierpinski gasket (see Figure \ref{FigSierpinski1}). The critical points are $\infty$ and $c_{k}=-\frac{2}{3}\zeta^{k}$ where $k\in\{0,1,2\}$ and $\zeta=-\frac{1}{2}+\frac{\sqrt{3}}{2}$ (a third root of unity). Since $H(\infty)=\infty$ and $H(c_{k})=\frac{4}{3}\zeta^{2k}$, $H^{2}(c_{k})=\frac{4}{3}\zeta^{k}$, the post-critical set is $\left\{\frac{4}{3},\frac{4}{3}\zeta,\frac{4}{3}\zeta^{2},\infty\right\}$.
$$\xymatrix @!0 @R=3pc @C=4pc {
&&\frac{4}{3}\zeta \ar@/_{4pc}/[dddd]_{1:1} && \\
&&& -\frac{2}{3}\zeta^{2} \ar[ul]^{2:1} & \\
\infty \ar@(ul,dl)[]_{2:1} && -\frac{2}{3} \ar[rr]^(0.75){2:1} && \frac{4}{3} \ar@(dr,ur)[]_{1:1} \\
&&& -\frac{2}{3}\zeta \ar[dl]_{2:1} & \\
&&\frac{4}{3}\zeta^{2} \ar@/^{2pc}/[uuuu]^{1:1} &&
}$$
That induces a degree $3$ partial self-covering $H:\C\backslash H^{-1}\left(\left\{\frac{4}{3},\frac{4}{3}\zeta,\frac{4}{3}\zeta^{2}\right\}\right)\longrightarrow\C\backslash\left\{\frac{4}{3},\frac{4}{3}\zeta,\frac{4}{3}\zeta^{2}\right\}$.

\begin{figure}[!h]
\begin{center}
\includegraphics[width=8cm]{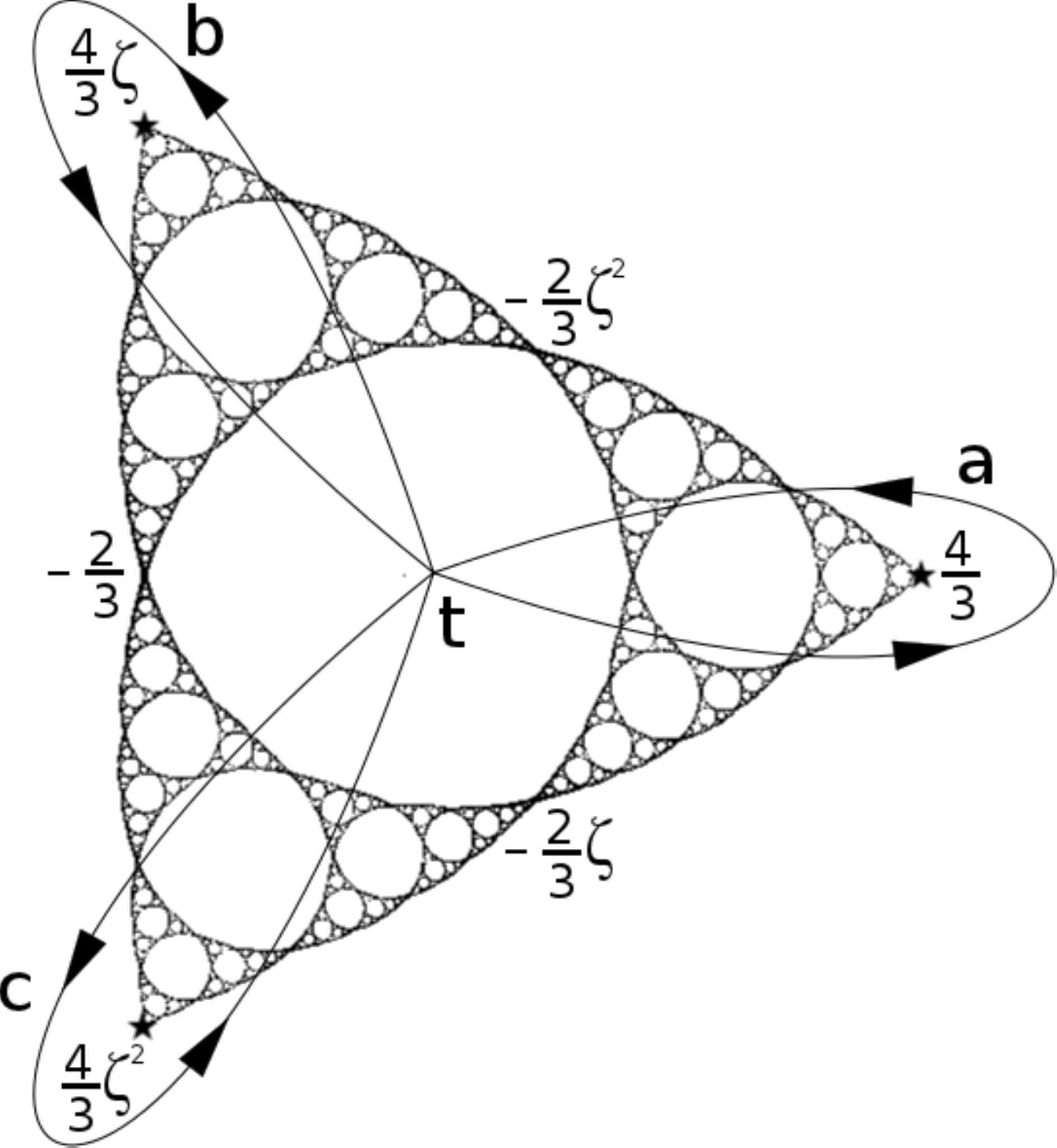}
\caption{A Sierpinski gasket and three generators of $\pi_{1}(\C\backslash\{\frac{4}{3},\frac{4}{3}\zeta,\frac{4}{3}\zeta^{2}\},t)$}\label{FigSierpinski1}
\end{center}
\end{figure}

Choose $t=0$ as base point. The fundamental group $\pi_{1}(\C\backslash\{\frac{4}{3},\frac{4}{3}\zeta,\frac{4}{3}\zeta^{2}\},t)$ may be described as the free group generated by three homotopy classes $[a],[b],[c]$ where the loops $a,b,c$ surround the post-critical points $\frac{4}{3},\frac{4}{3}\zeta,\frac{4}{3}\zeta^{2}$ respectively in a counterclockwise motion (see Figure \ref{FigSierpinski1}).

The preimages of $t$ are $x_{\varepsilon}=\frac{2^{4/3}}{3}\zeta^{\varepsilon}$ where the letter $\varepsilon$ belongs to the alphabet $\mathcal{E}=\{0,1,2\}$. For every letter $\varepsilon\in\mathcal{E}$, let $\ell_{\varepsilon}$ be the straight path (in $\C$) from $t$ to $x_{\varepsilon}$ as it is shown in Figure \ref{FigSierpinski2}. This picture also depicts the lifts of the loops $a,b,c$.

\begin{figure}[!h]
\begin{center}
\includegraphics[width=8cm]{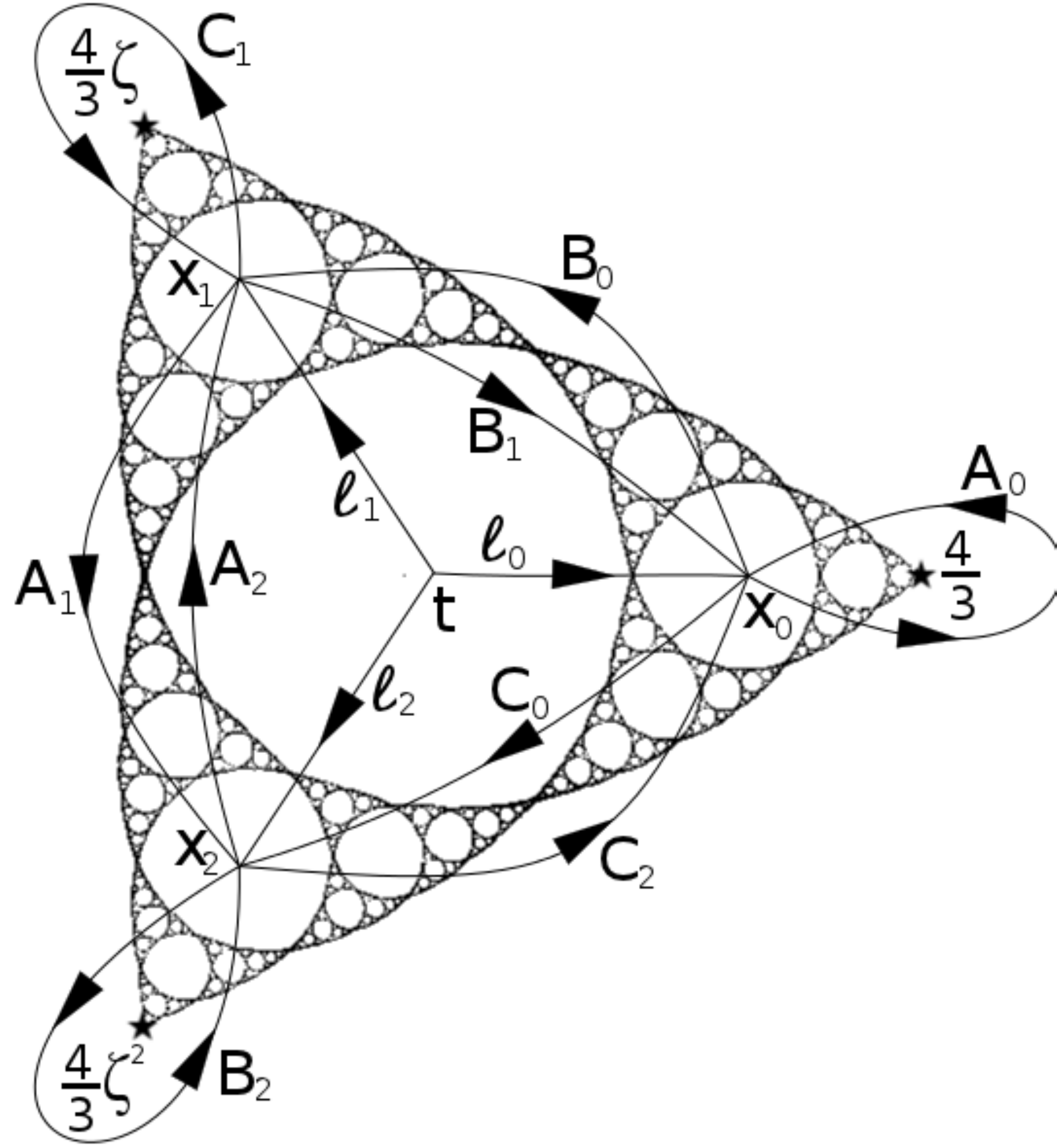}
\caption{The labeling choice $\ell_{0},\ell_{1},\ell_{2}$ and the lifts of $a,b,c$}\label{FigSierpinski2}
\end{center}
\end{figure}

One can deduce the monodromy action of $[a],[b],[c]$ on the first level.
$$\left\{\begin{array}{l} [a]0=0 \\ {[a]}1=2 \\ {[a]}2=1 \end{array}\right.\quad\text{and}\quad\left\{\begin{array}{l} [b]0=1 \\ {[b]}1=0 \\ {[b]}2=2 \end{array}\right.\quad\text{and}\quad\left\{\begin{array}{l} [c]0=2 \\ {[c]}1=1 \\ {[c]}2=0 \end{array}\right.$$
Furthermore
$$\left\{\begin{array}{l} [\ell_{0}.A_{0}.\ell_{0}^{-1}]=[a] \\ {[\ell_{1}.A_{1}.\ell_{2}^{-1}]}=[1_{t}] \\ {[\ell_{2}.A_{2}.\ell_{1}^{-1}]}=[1_{t}] \end{array}\right.\quad\text{and}\quad\left\{\begin{array}{l} [\ell_{0}.B_{0}.\ell_{1}^{-1}]=[1_{t}] \\ {[\ell_{1}.B_{1}.\ell_{0}^{-1}]}=[1_{t}] \\ {[\ell_{2}.B_{2}.\ell_{2}^{-1}]}=[c] \end{array}\right.\quad\text{and}\quad\left\{\begin{array}{l} [\ell_{0}.C_{0}.\ell_{2}^{-1}]=[1_{t}] \\ {[\ell_{1}.C_{1}.c_{1}^{-1}]}=[b] \\ {[\ell_{2}.C_{2}.\ell_{0}^{-1}]}=[1_{t}] \end{array}\right.$$
where $[1_{t}]$ is the homotopy class of the constant loop at base point $t$ (that is the identity element of the fundamental group $\pi_{1}(\C\backslash\{\frac{4}{3},\frac{4}{3}\zeta,\frac{4}{3}\zeta^{2}\},t)$). It follows from Lemma \ref{LemComputation} that
$$\forall w\in \mathcal{E}^{\star},\quad\left\{\begin{array}{l} [a]0w=0([a]w) \\ {[a]}1w=2w \\ {[a]}2w=1w \end{array}\right.\quad\text{and}\quad\left\{\begin{array}{l} [b]0w=1w \\ {[b]}1w=0w \\ {[b]}2w=2([c]w) \end{array}\right.\quad\text{and}\quad\left\{\begin{array}{l} [c]0w=2w \\ {[c]}1w=1([b]w) \\ {[c]}2w=0w \end{array}\right.$$

Therefore the iterated monodromy group of $H$ seen as a subgroup of $\Aut(T_{3})$ is generated by the following wreath recursions
$$\IMG(H,t)=\Big\langle a=(1,2)\recursion[a,\Id,\Id],b=(0,1)\recursion[\Id,\Id,c],c=(0,2)\recursion[\Id,b,\Id]\Big\rangle$$

$\IMG(H,t)$ is not isomorphic to the free group on a set of three elements (for instance one can prove that $a^{2}=b^{2}=c^{2}=\Id$ by using Lemma \ref{LemComputationsWreathRecursion} and Lemma \ref{LemIdentityTreeAutomorphism}) or equivalently the monodromy action of $H:\C\backslash H^{-1}\left(\left\{\frac{4}{3},\frac{4}{3}\zeta,\frac{4}{3}\zeta^{2}\right\}\right)\rightarrow\C\backslash\left\{\frac{4}{3},\frac{4}{3}\zeta,\frac{4}{3}\zeta^{2}\right\}$ is not faithful.

\newpage

Moreover $\IMG(H,t)$ looks like the Hanoi Towers group $\mathcal{H}$ (see Section \ref{SecTreeAutomorphism}). Indeed consider two tree automorphisms $g,h$ in $\Aut(T_{3})$ whose wreath recursions are given by
\vspace{-10pt}\\
$$g=(1,2)\recursion[h,h,h]\quad\text{and}\quad h=\recursion[g,g,g]$$
\vspace{-10pt}\\
This pair of tree automorphisms is well defined by induction on the successive levels of the regular rooted tree $T_{3}$. Now consider the following wreath recursions
\vspace{-10pt}
\begin{center}
\begin{tabular}{ccccccccc}
\hline
$a'$ & = & $g.a.g^{-1}$ & = & \parbox[c]{150pt}{$\xymatrix @!0 @R=2pc @C=4pc { 0 \ar[r]^{\textstyle h} & 0 \ar[r]^{\textstyle a} & 0 \ar[r]^{\textstyle h^{-1}} & 0 \\ 1 \ar[rd]^(0.25){\textstyle h} & 1 \ar[rd] & 1 \ar[rd]^(0.25){\textstyle h^{-1}} & 1 \\ 2 \ar[ru]_(0.25){\textstyle h} & 2 \ar[ru] & 2 \ar[ru]_(0.25){\textstyle h^{-1}} & 2}$} & = & \parbox[c]{55pt}{$\xymatrix @!0 @R=2pc @C=4pc {0 \ar[r]^{\textstyle h.a.h^{-1}} & 0 \\ 1 \ar[rd] & 1 \\ 2 \ar[ru] & 2}$} & = & $(1,2)\recursion[h.a.h^{-1},\Id,\Id]$ \\
\hline
$b'$ & = & $g.c.g^{-1}$ & = & \parbox[c]{150pt}{$\xymatrix @!0 @R=2pc @C=4pc { 0 \ar[r]^{\textstyle h} & 0 \ar@(r,l)[rdd] & 0 \ar[r]^{\textstyle h^{-1}} & 0 \\ 1 \ar[rd]^(0.25){\textstyle h} & 1 \ar[r]^(0.25){\textstyle b} & 1 \ar[rd]^(0.25){\textstyle h^{-1}} & 1 \\ 2 \ar[ru]_(0.25){\textstyle h} & 2 \ar@(r,l)[ruu] & 2 \ar[ru]_(0.25){\textstyle h^{-1}} & 2 }$} & = & \parbox[c]{55pt}{$\xymatrix @!0 @R=2pc @C=4pc {0 \ar[rd] & 0 \\ 1 \ar[ru] & 1 \\ 2 \ar[r]^{\textstyle h.b.h^{-1}} & 2}$} & = & $(0,1)\recursion[\Id,\Id,h.b.h^{-1}]$ \\
\hline
$c'$ & = & $g.b.g^{-1}$ & = & \parbox[c]{150pt}{$\xymatrix @!0 @R=2pc @C=4pc { 0 \ar[r]^{\textstyle h} & 0 \ar[rd] & 0 \ar[r]^{\textstyle h^{-1}} & 0 \\ 1 \ar[rd]^(0.25){\textstyle h} & 1 \ar[ru] & 1 \ar[rd]^(0.25){\textstyle h^{-1}} & 1 \\ 2 \ar[ru]_(0.25){\textstyle h} & 2 \ar[r]^{\textstyle c} & 2 \ar[ru]_(0.25){\textstyle h^{-1}} & 2 }$} & = & \parbox[c]{55pt}{$\xymatrix @!0 @R=2pc @C=4pc { 0 \ar@(r,l)[rdd] & 0 \\ 1 \ar[r]^{\textstyle h.c.h^{-1}} & 1 \\ 2 \ar@(r,l)[ruu] & 2 }$} & = & $(0,2)\recursion[\Id,h.c.h^{-1},\Id]$ \\
\hline
\end{tabular}
\end{center}
with
\begin{center}
\begin{tabular}{ccccccc}
\hline
$h.a.h^{-1}$ & = & \parbox[c]{150pt}{$\xymatrix @!0 @R=2pc @C=4pc { 0 \ar[r]^{\textstyle g} & 0 \ar[r]^{\textstyle a} & 0 \ar[r]^{\textstyle g^{-1}} & 0 \\ 1 \ar[r]^{\textstyle g} & 1 \ar[rd] & 1 \ar[r]^{\textstyle g^{-1}} & 1 \\ 2 \ar[r]^{\textstyle g} & 2 \ar[ru] & 2 \ar[r]^{\textstyle g^{-1}} & 2}$} & = & \parbox[c]{55pt}{$\xymatrix @!0 @R=2pc @C=4pc {0 \ar[r]^{\textstyle g.a.g^{-1}} & 0 \\ 1 \ar[rd] & 1 \\ 2 \ar[ru] & 2}$} & = & $(1,2)\recursion[a',\Id,\Id]$ \\
\hline
$h.b.h^{-1}$ & = & \parbox[c]{150pt}{$\xymatrix @!0 @R=2pc @C=4pc { 0 \ar[r]^{\textstyle g} & 0 \ar[rd] & 0 \ar[r]^{\textstyle g^{-1}} & 0 \\ 1 \ar[r]^{\textstyle g} & 1 \ar[ru] & 1 \ar[r]^{\textstyle g^{-1}} & 1 \\ 2 \ar[r]^{\textstyle g} & 2 \ar[r]^{\textstyle c} & 2 \ar[r]^{\textstyle g^{-1}} & 2 }$} & = & \parbox[c]{55pt}{$\xymatrix @!0 @R=2pc @C=4pc {0 \ar[rd] & 0 \\ 1 \ar[ru] & 1 \\ 2 \ar[r]^{\textstyle g.c.g^{-1}} & 2}$} & = & $(0,1)\recursion[\Id,\Id,b']$ \\
\hline
$h.c.h^{-1}$ & = & \parbox[c]{150pt}{$\xymatrix @!0 @R=2pc @C=4pc { 0 \ar[r]^{\textstyle g} & 0 \ar@(r,l)[rdd] & 0 \ar[r]^{\textstyle g^{-1}} & 0 \\ 1 \ar[r]^{\textstyle g} & 1 \ar[r]^(0.25){\textstyle b} & 1 \ar[r]^{\textstyle g^{-1}} & 1 \\ 2 \ar[r]^{\textstyle g} & 2 \ar@(r,l)[ruu] & 2 \ar[r]^{\textstyle g^{-1}} & 2 }$} & = & \parbox[c]{55pt}{$\xymatrix @!0 @R=2pc @C=4pc { 0 \ar@(r,l)[rdd] & 0 \\ 1 \ar[r]^{\textstyle g.b.g^{-1}} & 1 \\ 2 \ar@(r,l)[ruu] & 2 }$} & = & $(0,2)\recursion[\Id,c',\Id]$ \\
\hline
\end{tabular}
\end{center}

\noindent Therefore a quick induction gives
\vspace{-10pt}
$$\left\{\begin{array}{l} a'=(1,2)\recursion[a',\Id,\Id] \\ b'=(0,1)\recursion[\Id,\Id,b'] \\ c'=(0,2)\recursion[\Id,c',\Id]\end{array}\right.$$
\vspace{-10pt}

These wreath recursions are the three generators of the Hanoi Towers group $\mathcal{H}$ (see Section \ref{SecTreeAutomorphism}), and thus $g.\langle a,b,c\rangle.g^{-1}=\mathcal{H}$. Consequently the iterated monodromy group $\IMG(H,t)$ seen as a subgroup of $\Aut(T_{3})$ is conjugate to the Hanoi Towers group $\mathcal{H}$ by a tree automorphism of $T_{3}$.

But one can show that $g$ does not correspond to a map of the form $\varphi^{(L),(L')}=\varphi^{(L)}\circ\left(\varphi^{(L')}\right)^{-1}$ for some pair of labeling choices $(L)$ and $(L')$, or equivalently there is no labeling choice for which $\IMG(H,t)$ and $\mathcal{H}$ are equal as subgroup of $\Aut(T_{3})$.

However similar computations show that $\IMG(\overline{H},t)=\mathcal{H}$ where $\overline{H}:z\mapsto\overline{H(z)}=\overline{z}^{2}-\frac{16}{27\overline{z}}$.

\section{Some properties in holomorphic dynamics}

\subsection{Combinatorial invariance}\label{SecCombinatorialInvariance}

Let $f$ and $g$ be two post-critically finite branched coverings on the topological sphere $\S^{2}$ and denote by $P_{f}$ and $P_{g}$ their respective post-critical sets. Recall that $f$ and $g$ are said to be combinatorially equivalent (or Thurston equivalent) if there exist two orientation-preserving homeomorphisms $\psi_{0},\psi_{1}$ on $\S^{2}$ such that
\begin{description}
\item[(i)] $\psi_{0}\circ f=g\circ\psi_{1}$
\item[(ii)] $\psi_{0}(P_{f})=\psi_{1}(P_{f})=P_{g}$ and $\psi_{0}|_{P_{f}}=\psi_{1}|_{P_{f}}$
\item[(iii)] $\psi_{0}$ is isotopic to $\psi_{1}$ relatively to $P_{f}$
\end{description}

Let $t\in\S^{2}\backslash P_{f}$ be a base point. Remark that the push-forward map $(\psi_{0})_{\star}:[\gamma]\mapsto[\psi_{0}\circ\gamma]$ realizes a group isomorphism from $\pi_{1}(\S^{2}\backslash P_{f},t)$ onto $\pi_{1}(\S^{2}\backslash P_{g},\psi_{0}(t))$ since $\psi_{0}(P_{f})=P_{g}$. Applying the fundamental homomorphism theorem, one gets a group isomorphism, say $(\psi_{0})_{\star}$ again, from $\IMG(f,t)$ onto a quotient group of $\pi_{1}(\S^{2}\backslash P_{g},\psi_{0}(t))$.

\begin{prop}\label{PropCombinatorialEquivalence}
If $f$ and $g$ are two combinatorially equivalent post-critically finite branched coverings on the topological sphere $\S^{2}$, say $\psi_{0}\circ f=g\circ \psi_{1}$, then for any base point $t\in\S^{2}\backslash P_{f}$
$$(\psi_{0})_{\star}\Big(\IMG(f,t)\Big)=\IMG(g,\psi_{0}(t))$$
More precisely, for every loop $\gamma$ in $\S^{2}\backslash P_{f}$ with base point $t$, the monodromy action induced by $[\psi_{0}\circ\gamma]$ on the tree of preimages $T(g,\psi_{0}(t))$ is the same as that one induced by $[\gamma]$ on the tree of preimages $T(f,t)$ (up to conjugation by tree isomorphism).
\end{prop}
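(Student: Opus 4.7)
The plan is to build an inductive tower of lifts $\psi_{n}:\S^{2}\to\S^{2}$ of $\psi_{0}$ through the iterates $f^{n}$ and $g^{n}$, use them to define a tree isomorphism $\Psi:T(f,t)\to T(g,\psi_{0}(t))$, and show that $\Psi$ conjugates the monodromy action of $[\gamma]$ to that of $[\psi_{0}\circ\gamma]$.

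First, starting from the given $\psi_{0},\psi_{1}$, I would recursively produce homeomorphisms $\psi_{n}$ of $\S^{2}$ satisfying $\psi_{n-1}\circ f=g\circ\psi_{n}$. Given $\psi_{n}$ together with an isotopy $(\Psi_{s})_{s\in[0,1]}$ from $\psi_{0}$ to $\psi_{n}$ rel $P_{f}$ (which exists inductively from condition (iii)), the one-parameter family $\Psi_{s}\circ f$ takes values in $\S^{2}\setminus P_{g}$ on $\S^{2}\setminus f^{-1}(P_{f})$ and starts from $g\circ\psi_{1}$. For every point $x$ I would lift the path $s\mapsto(\Psi_{s}\circ f)(x)$ through the covering $g:\S^{2}\setminus g^{-1}(P_{g})\to\S^{2}\setminus P_{g}$ starting at $\psi_{1}(x)$ (Proposition \ref{PropLift}(1)), and set $\psi_{n+1}(x)$ to be the endpoint of this lift; Proposition \ref{PropLift}(2) provides the joint continuity needed to view $\psi_{n+1}$ as a continuous map. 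The rel $P_{f}$ condition ensures that these lifted paths are constant above $f^{-1}(P_{f})$, so $\psi_{n+1}$ extends across that discrete set to a homeomorphism of $\S^{2}$ and is isotopic to $\psi_{1}$ (hence to $\psi_{0}$) rel $f^{-1}(P_{f})$, propagating the inductive hypothesis. A short induction on the commutation $\psi_{n-1}\circ f=g\circ\psi_{n}$ then yields the global identity
$$\psi_{0}\circ f^{n}=g^{n}\circ\psi_{n}\qquad(n\geqslant 0).$$

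Next, I would define $\Psi:T(f,t)\to T(g,\psi_{0}(t))$ by $\Psi(x)=\psi_{n}(x)$ for $x\in f^{-n}(t)$. The identity above shows that $\Psi$ sends $f^{-n}(t)$ bijectively onto $g^{-n}(\psi_{0}(t))$, so $\Psi$ is level preserving, and edges of $T(f,t)$ are preserved because $\{f(x),x\}$ goes to $\{g(\psi_{n+1}(x)),\psi_{n+1}(x)\}$. For any loop $\gamma$ based at $t$ in $\S^{2}\setminus P_{f}$ and any $x\in f^{-n}(t)$, if $\Gamma_{x}$ denotes the $f^{n}$-lift of $\gamma$ from $x$, then
$$g^{n}\circ(\psi_{n}\circ\Gamma_{x})=\psi_{0}\circ f^{n}\circ\Gamma_{x}=\psi_{0}\circ\gamma,$$
so by the uniqueness clause of Proposition \ref{PropLift}(1) the path $\psi_{n}\circ\Gamma_{x}$ is exactly the $g^{n}$-lift of $\psi_{0}\circ\gamma$ from $\Psi(x)$. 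Comparing endpoints gives $\Psi([\gamma]\cdot x)=[\psi_{0}\circ\gamma]\cdot\Psi(x)$, which is the intertwining of monodromy actions claimed, and therefore $(\psi_{0})_{\star}(\IMG(f,t))=\IMG(g,\psi_{0}(t))$.

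The substantive step is the tower construction. Conditions (i) and (ii) by themselves do not pin down the monodromy action: replacing $\psi_{1}$ by its composition with a non-trivial deck transformation of $g$ preserves both (i) and (ii) but genuinely changes the resulting tree action on the lower levels. The isotopy condition (iii) is precisely what rules out this ambiguity and licenses the coherent inductive lifting described above; once that tower is in place, everything else is bookkeeping through the lifting properties of Proposition \ref{PropLift}.
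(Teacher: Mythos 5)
Your proof is correct, but it takes a genuinely different route from the paper's. The paper factors the combinatorial equivalence as a global conjugacy composed with a twist: it sets $h=\psi_{1}^{-1}\circ\psi_{0}$, observes that $h$ is isotopic to $\Id_{\S^{2}}$ relatively to $P_{f}$ and that $f\circ h=\psi_{0}^{-1}\circ g\circ\psi_{0}$, so that everything reduces to showing $\IMG(f,t)=\IMG(f\circ h,t)$; this it does by proving, by induction on $n$ and using only the path- and homotopy-lifting properties for loops, that every $(f\circ h)^{n}$-lift of a loop $\gamma$ in the kernel of the monodromy action of $f$ is a loop homotopic to some $f^{n}$-lift of $\gamma$, whence the two kernels coincide. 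You instead build the tower of lifted homeomorphisms $\psi_{n}$ with $\psi_{n-1}\circ f=g\circ\psi_{n}$ (the standard machinery behind the Thurston pullback map) and read off an explicit tree isomorphism $\Psi\colon T(f,t)\to T(g,\psi_{0}(t))$ intertwining the two monodromy actions. Your route yields a more explicit and slightly stronger output: it establishes the ``more precisely'' clause directly as a conjugacy of tree actions, whereas the paper's kernel comparison gives the equality of groups and leaves the conjugacy statement more implicit. The price is that you must lift isotopies parametrized by the whole sphere rather than just paths, and you must check that each $\psi_{n+1}$ extends continuously across the finite set $f^{-1}(P_{f})$ and remains a homeomorphism; you assert these points rather than prove them, but they are standard. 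Both arguments use condition (iii) at the same critical juncture, and your closing remark correctly identifies it as the hypothesis that removes the ambiguity in the choice of lift (though the deck-transformation example you give to illustrate the ambiguity is only available for special $g$).
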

Equivalently speaking, the iterated monodromy group is an invariant with respect to combinatorial equivalence classes. But it is not a complete invariant (some additional algebraic data are required, see \cite{SelfSimilarGroups}).

\begin{proof}
At first, remark that $h=\psi_{1}^{-1}\circ\psi_{0}$ is an orientation-preserving homeomorphism isotopic to $\Id_{\S^{2}}$ relatively to $P_{f}$ such that $f\circ h=\psi_{0}^{-1}\circ g\circ\psi_{0}$. It follows that $(\psi_{0})_{\star}(\IMG(f\circ h,t))=\IMG(g,\psi_{0}(t))$ (by definition of the push-forward isomorphism $(\psi_{0})_{\star}$). Therefore one only needs to check that $\IMG(f,t)=\IMG(f\circ h,t)$, or equivalently from Definition \ref{DefIteratedMonodromyGroup}, the kernel of the monodromy action of $f\circ h$ is the same as that one of $f$ (as subgroups of $\pi_{1}(\S^{2}\backslash P_{f},t)$).

So let $\gamma$ be a loop in $\S^{2}\backslash P_{f}$ with base point $t$ such that $[\gamma]$ is in the kernel of the monodromy action of $f$, namely for every preimage $y\in\bigsqcup_{n\geqslant 0} f^{-n}(t)$ the $f^{n}$-lift of $\gamma$ from $y$ is again a loop.

Let $x_{1}$ be a preimage in $(f\circ h)^{-1}(t)$. Since $[\gamma]$ is in the kernel of the monodromy action of $f$, the $f$-lift $\Gamma_{h(x_{1})}$ of $\gamma$ from $h(x_{1})\in f^{-1}(t)$ is a loop. It easily follows that $h^{-1}\circ\Gamma_{h(x_{1})}$, which is the $(f\circ h)$-lift of $\gamma$ from $x_{1}$, is a loop as well. Furthermore remark that $h^{-1}\circ\Gamma_{h(x_{1})}$ is homotopic to $\Gamma_{h(x_{1})}$ in $\S^{2}\backslash P_{f}$ (since $h$ is isotopic to $\Id_{\S^{2}}$ relatively to $P_{f}$).

Now assume by induction that every $(f\circ h)^{n}$-lift of $\gamma$ is a loop homotopic to some $f^{n}$-lift of $\gamma$. Let $x_{n+1}$ be a preimage in $(f\circ h)^{-(n+1)}(t)$. From assumption, the $(f\circ h)^{n}$-lift of $\gamma$ from $x_{n}=(f\circ h)(x_{n+1})$, say $\widetilde{\Gamma}_{x_{n}}$, is a loop homotopic to some $f^{n}$-lift of $\gamma$, say $\Gamma_{y_{n}}$ where $y_{n}\in f^{-n}(t)$. Using the homotopy lifting property from Proposition \ref{PropLift}, it follows that the $f$-lift $\widetilde{\Gamma}_{h(x_{n+1})}$ of $\widetilde{\Gamma}_{x_{n}}$ from $h(x_{n+1})\in f^{-1}(x_{n})$ is homotopic to some $f$-lift of $\Gamma_{y_{n}}$, say $\Gamma_{y_{n+1}}$ where $y_{n+1}\in f^{-(n+1)}(t)$. Since $[\gamma]$ is in the kernel of the monodromy action of $f$, $\Gamma_{y_{n+1}}$ is a loop (as $f^{n+1}$-lift of $\gamma$) and thus $\widetilde{\Gamma}_{h(x_{n+1})}$ also. It easily follows that $h^{-1}\circ\widetilde{\Gamma}_{h(x_{n+1})}$, which is the $(f\circ h)^{(n+1)}$-lift of $\gamma$ from $x_{n+1}$, is a loop as well. Furthermore $h^{-1}\circ\widetilde{\Gamma}_{h(x_{n+1})}$ is homotopic to $\widetilde{\Gamma}_{h(x_{n+1})}$ and therefore to $\Gamma_{y_{n+1}}$.

The following graph depicts this argument in short.

$$\xymatrix{ &&&&&&&&&&& \\ && x_{n+1} \ar@{~>} `ul[ul] `dl[dl]_{\textstyle h^{-1}\circ\widetilde{\Gamma}_{h(x_{n+1})}} `dr[] [] \ar[ddd]_{\textstyle h} \ar@/^{4pc}/[dddrrrr]^{\textstyle f\circ h} &&&&&&&&& \\ &&&&&&&&&&& \\ &&&&&&&&&&& \\ && h(x_{n+1}) \ar@{~>} `ul[ul] `dl[dl]_{\textstyle \widetilde{\Gamma}_{h(x_{n+1})}} `dr[] [] \ar@{-->}[dddd] \ar[rr]^{\textstyle f}  &&&& x_{n} \ar@{~>} `ul[ul] `dl[dl]_{\textstyle \widetilde{\Gamma}_{x_{n}}} `dr[] [] \ar@{-->}[dddd] &&&&& \\ &&&&&&&&&&& \\ &&&&&&&&&&& \\ &&&&& \quad &&&&& \quad & \\ && y_{n+1} \ar@{~>} `ul[ul] `dl[dl]_{\textstyle \Gamma_{y_{n+1}}} `dr[] [] \ar[rr]^{\textstyle f} &&&& y_{n} \ar@{~>} `ul[ul] `dl[dl]_{\textstyle \Gamma_{y_{n}}} `dr[] [] \ar[rrr]^{\textstyle f^{n}} &&&&& t \ar@{~>} `ul[ul] `dl[dl]_{\textstyle \gamma} `dr[] [] \\ &&&&&&&&&&& }$$
It follows by induction that $[\gamma]$ is in the kernel of the monodromy action of $f\circ h$. Consequently
$$(\psi_{0})_{\star}(\IMG(f,t))\supset(\psi_{0})_{\star}(\IMG(f\circ h,t))=\IMG(g,\psi_{0}(t))$$
The reciprocal inclusion follows by symmetry since $(\psi_{0})_{\star}=(\psi_{1})_{\star}^{-1}$.
\end{proof}

\begin{example}[Example - Classification of quadratic branched coverings with three post-critical points]\quad\\
Although the iterated monodromy group is not a complete invariant, it may be used to characterize some combinatorial equivalence classes. For instance, consider degree 2 branched coverings on the topological sphere $\S^{2}$ whose post-critical sets contain exactly three points. Such branched covering has exactly two simple critical points. By a quick exhaustion, there are exactly four ramification portraits with two simple critical points and three post-critical points, which are as follows
\begin{center}
\begin{tabular}{|c|c|c|c|}
\hline
\parbox[c]{45pt}{\vspace{10pt}$\xymatrix{ \star \ar@(dr,ur)[]_{2:1} & \\ \star \ar@/_{1pc}/[r]_{2:1} & \star \ar@/_{1pc}/[l]_{1:1} }$} & \parbox[c]{110pt}{$\xymatrix{ \star \ar@(dr,ur)[]_{2:1} &&  \\ \ar[r]^{2:1} & \star \ar[r]^{1:1} & \star \ar@(dr,ur)[]_{1:1} }$} & \parbox[c]{110pt}{$\xymatrix{ \ar[r]^{2:1} & \star \ar[r]^{1:1} & \star \ar@/_{1pc}/[r]_{2:1} & \star \ar@/_{1pc}/[l]_{1:1} }$} & \parbox[c]{80pt}{$\xymatrix{ \star \ar@/_{1pc}/[r]_{2:1} & \star \ar@/_{1pc}/[r]_{2:1} & \star \ar@/_{1pc}/[ll]_{1:1} }$} \\
\hline
\end{tabular}
\end{center}
According to Thurston topological characterization of post-critically finite rational maps \cite{ThurstonProof}, any combinatorial equivalence class of quadratic branched coverings with exactly three post-critical points contains a unique rational map up to conjugation by a Möbius map. Some easy computations show that each of the ramification portraits above corresponds to exactly one quadratic rational map up to conjugation by a Möbius map. These rational models are
$$Q_{-1}:z\mapsto z^{2}-1\quad\text{,}\quad C_{2}:z\mapsto 2z^{2}-1\quad\text{,}\quad R:z\mapsto\left(\frac{z-1}{z+1}\right)^{2}\quad\text{and}\quad F:z\mapsto 1-\frac{1}{z^{2}}$$
The first three of them have already studied in Section \ref{SecExamples}. The rational map $F$ (and $R$ too) was studied in $\cite{BartholdiNekrashevych}$ where the authors proved that the corresponding iterated monodromy group (seen as a subgroup of $\Aut(T_{2})$, for some base point $t\in\C\backslash\{0,1\}$ and some labeling choice) is generated by the following wreath recursions
$$\IMG(F,t)=\Big\langle a=(0,1)\recursion[\Id,a^{-1}.b^{-1}],b=\recursion[a,\Id]\Big\rangle$$
It follows that each row of the following table corresponds to exactly one combinatorial equivalence class.

\hspace{-10pt}
\begin{tabular}{|c|c|c|c|}
\hline
ramification portrait & rational model & generators of $\IMG$ & Julia set \\
\hline
\hline
\parbox[c]{45pt}{\vspace{10pt}$\xymatrix{ \star \ar@(dr,ur)[]_{2:1} & \\ \star \ar@/_{1pc}/[r]_{2:1} & \star \ar@/_{1pc}/[l]_{1:1} }$} & $Q_{-1}:z\mapsto z^{2}-1$ & $\begin{array}{l} a=(0,1)\recursion[b,\Id] \\ b=\hspace{25pt}\recursion[a,\Id] \end{array}$ & \parbox[c]{120pt}{\includegraphics[width=120pt]{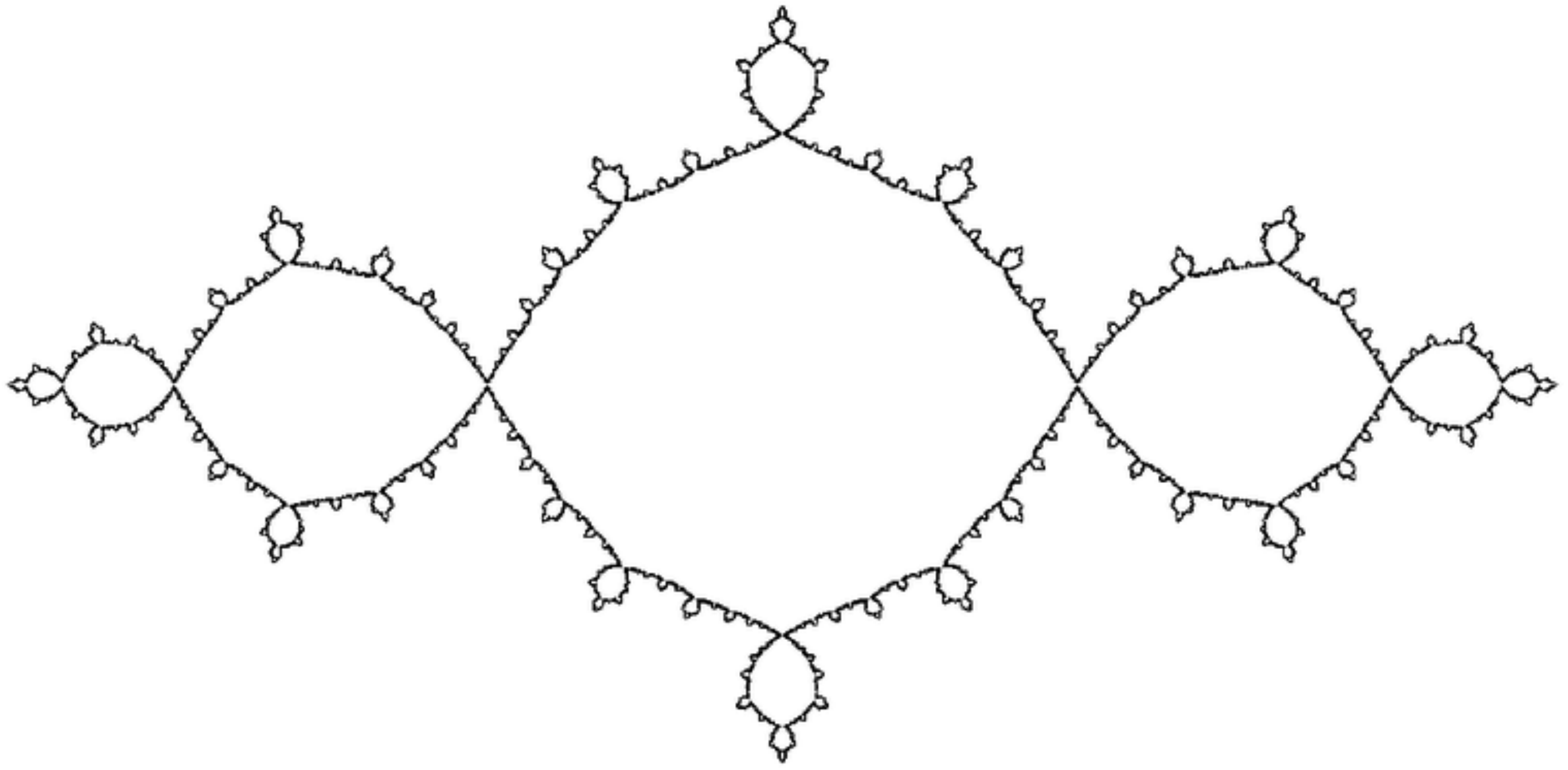}} \\
\hline
\parbox[c]{110pt}{\vspace{10pt}$\xymatrix{ \star \ar@(dr,ur)[]_{2:1} &&  \\ \ar[r]^{2:1} & \star \ar[r]^{1:1} & \star \ar@(dr,ur)[]_{1:1} }$\vspace{10pt}} & $\begin{array}{c} C_{2}:z\mapsto 2z^{2}-1 \\ \text{(or }Q_{-2}:z\mapsto z^{2}-2\text{)} \end{array}$ & $\begin{array}{l} a=(0,1)\recursion[\Id,\Id] \\ b=\hspace{25pt}\recursion[b,a] \end{array}$ & \parbox[c]{120pt}{\includegraphics[width=120pt]{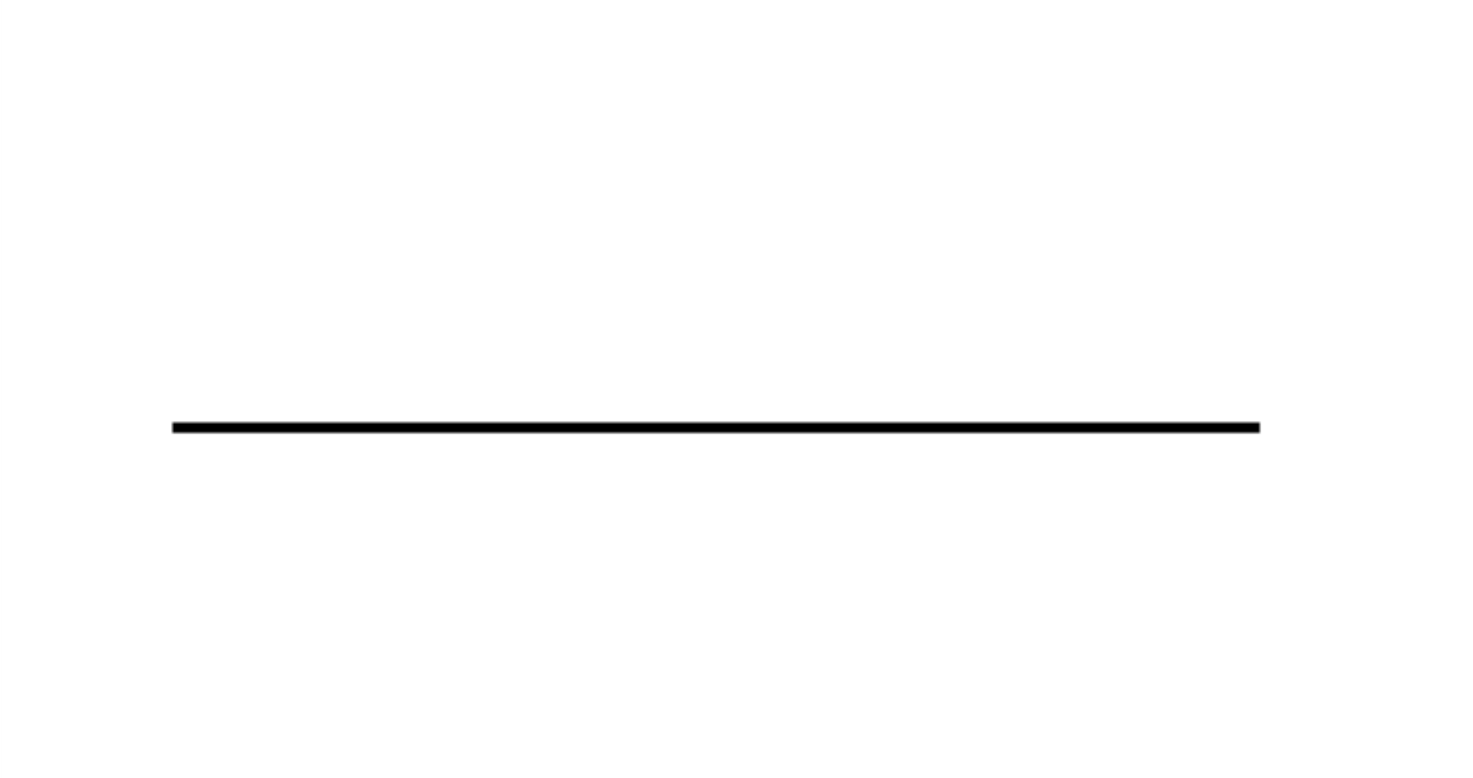}} \\ 
\hline
 \parbox[c]{110pt}{$\xymatrix{ \ar[r]^{2:1} & \star \ar[r]^{1:1} & \star \ar@/_{1pc}/[r]_{2:1} & \star \ar@/_{1pc}/[l]_{1:1} }$} & $R:z\mapsto\left(\dfrac{z-1}{z+1}\right)^{2}$ & $\begin{array}{l} a=(0,1)\recursion[\Id,\Id] \\ b=(0,1)\recursion[a,b^{-1}] \end{array}$ & \parbox[c]{120pt}{\vspace{5pt}\hspace{10pt}\includegraphics[width=100pt]{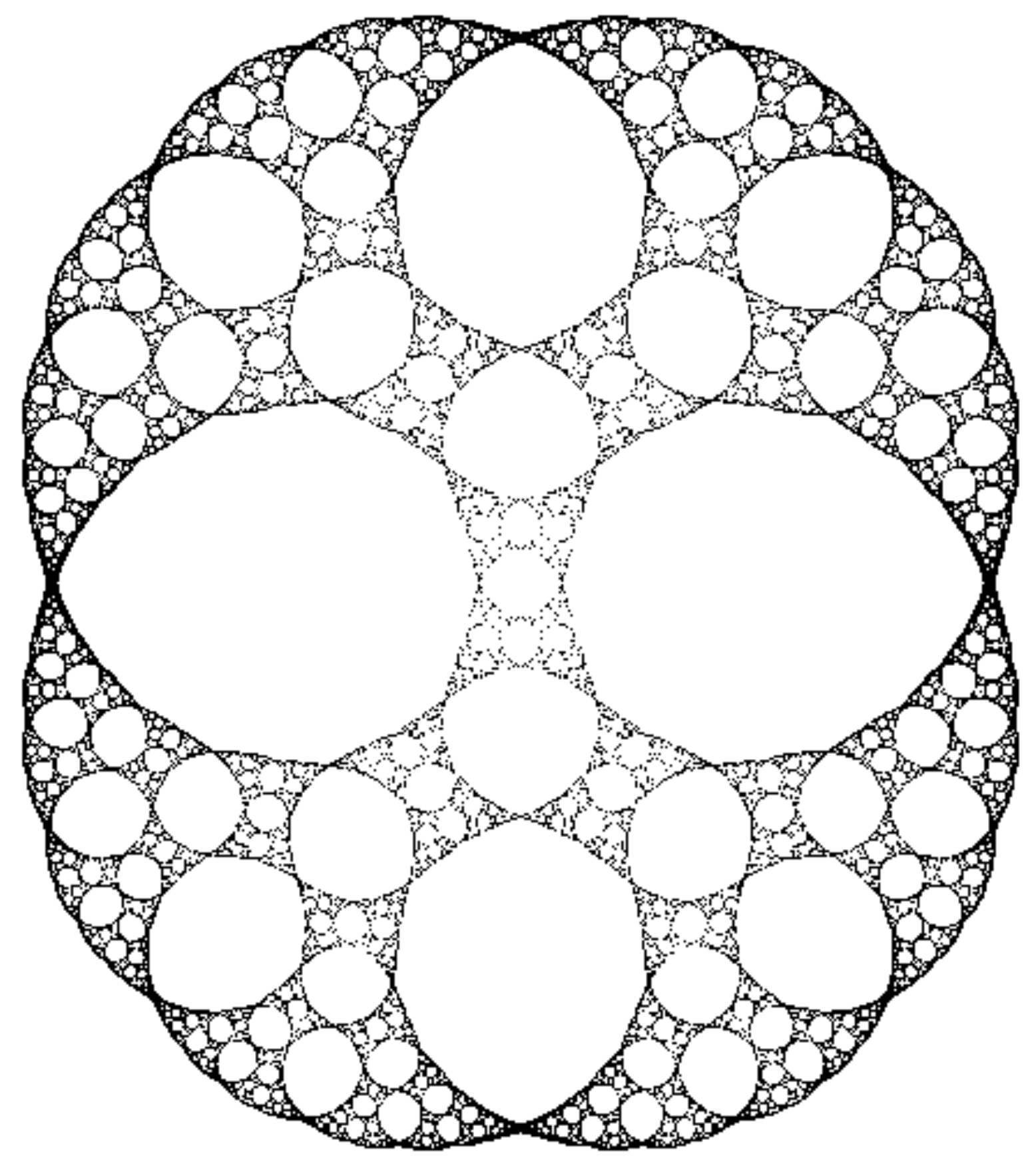}\vspace{5pt}} \\
\hline
\parbox[c]{80pt}{$\xymatrix{ \star \ar@/_{1pc}/[r]_{2:1} & \star \ar@/_{1pc}/[r]_{2:1} & \star \ar@/_{1pc}/[ll]_{1:1} }$} & $F:z\mapsto 1-\dfrac{1}{z^{2}}$ & $\begin{array}{l} a=(0,1)\recursion[\Id,a^{-1}.b^{-1}] \\ b=\hspace{25pt}\recursion[a,\Id] \end{array}$ & \parbox[c]{120pt}{\vspace{5pt}\includegraphics[width=120pt]{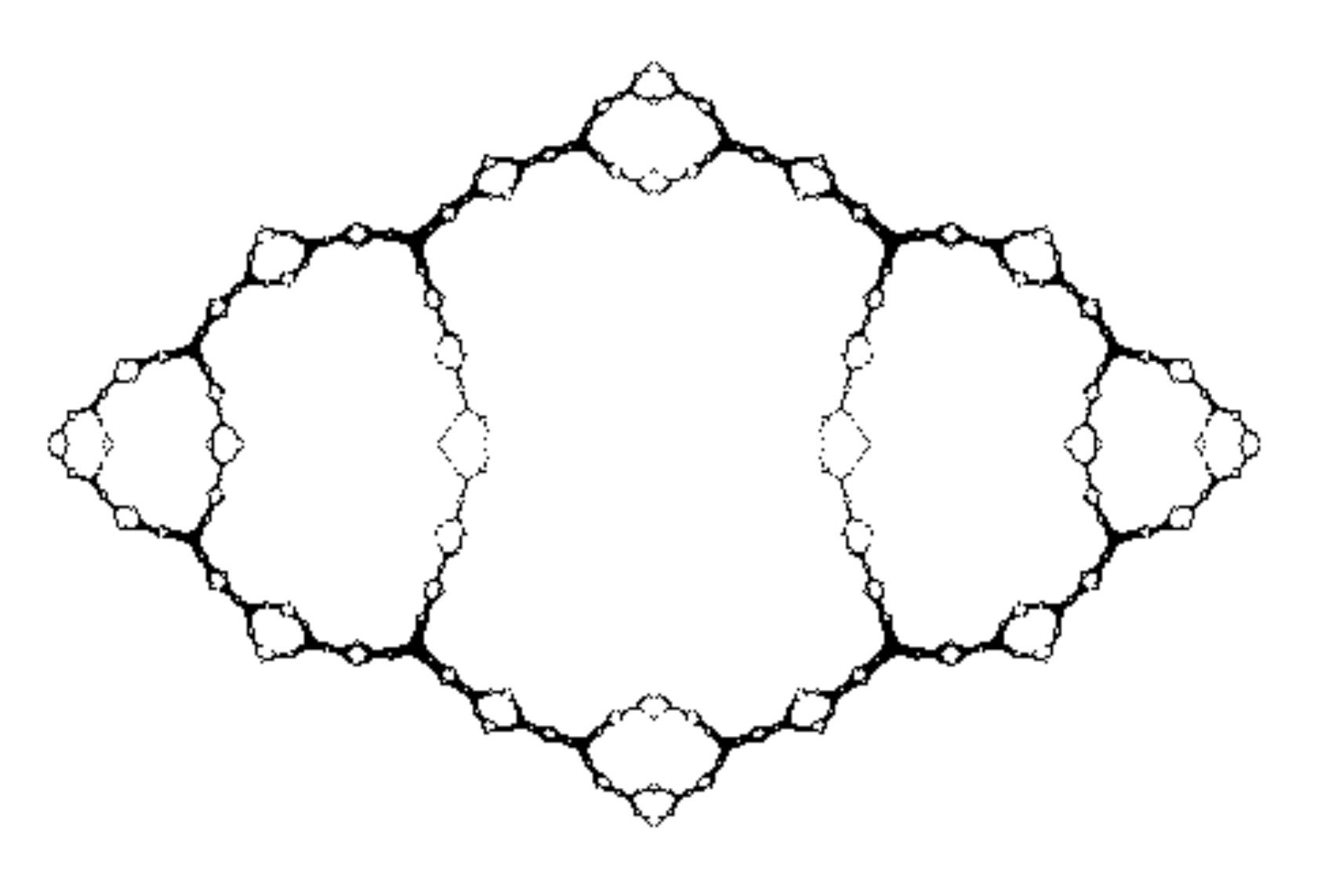}} \\
\hline
\end{tabular}

Since these iterated monodromy groups are pairwise not isomorphic (for instance, see \cite{ClassificationGroups3state2letter}), they entirely characterize the four combinatorial equivalence classes of quadratic branched coverings with exactly three post-critical points.
\end{example}

\begin{example}[Example - Hubbard Twisted Rabbit Problem (due to Bartholdi and Nekrashevych \cite{BartholdiNekrashevych})]\quad\\
In the previous example, iterated monodromy groups are redundant since the four combinatorial equivalence classes are actually characterized by their ramification portraits. However there exist combinatorial equivalence classes with same ramification portrait which are distinguished by their iterated monodromy groups. The first example is due to Pilgrim \cite{DessinsDEnfants}. Another example is provided by the solution of the Hubbard Twisted Rabbit Problem given by Bartholdi and Nekrashevych \cite{BartholdiNekrashevych} and for which Proposition \ref{PropCombinatorialEquivalence} is essential.

\newpage

Namely consider a quadratic polynomial $Q_{c}:z\mapsto z^{2}+c$ where $c\in\C$ is chosen in order that the critical point 0 is on a periodic orbit of period 3. There are precisely three such parameters $c$ which are denoted by $c_{airplane}\approx -1.755$, $c_{rabbit}\approx-0.123+0.745i$ and $c_{corabbit}\approx-0.123-0.745i$. 
\vspace{-7pt}
$$\xymatrix{ \infty \ar@(dr,ur)[]_{2:1} && 0 \ar@/_{1pc}/[r]_{2:1} & c \ar@/_{1pc}/[r]_{1:1} & c^{2}+c \ar@/_{1pc}/[ll]_{1:1} }$$
\vspace{-10pt}
\begin{center}
\begin{tabular}{|c|c|c|}
\hline
airplane & rabbit & corabbit \\
\hline
\parbox[c]{120pt}{\includegraphics[width=120pt]{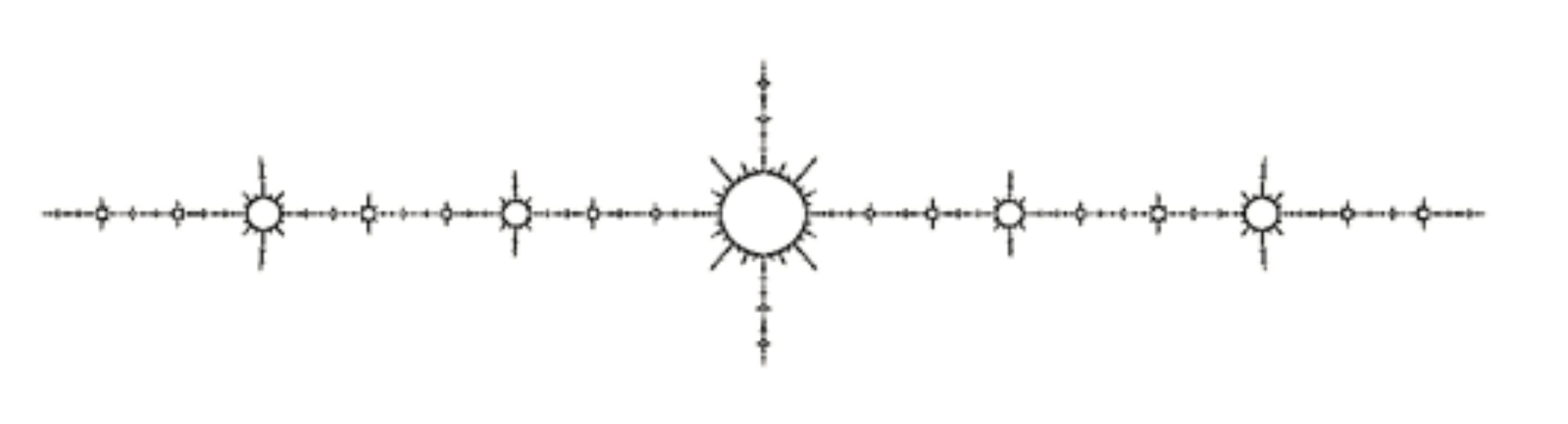}} & \parbox[c]{120pt}{\vspace{5pt}\includegraphics[width=120pt]{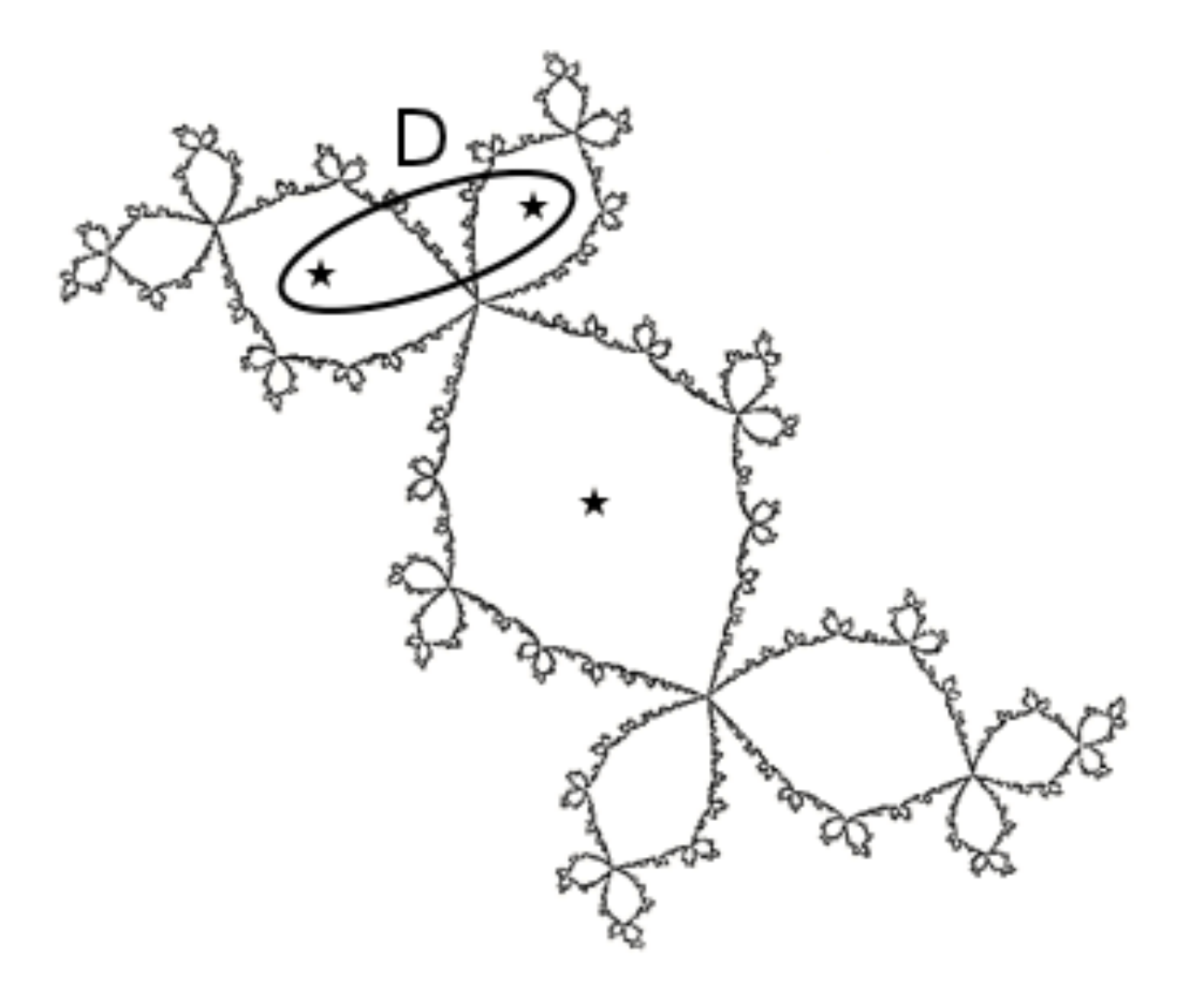}} & \parbox[c]{120pt}{\vspace{5pt}\includegraphics[width=120pt]{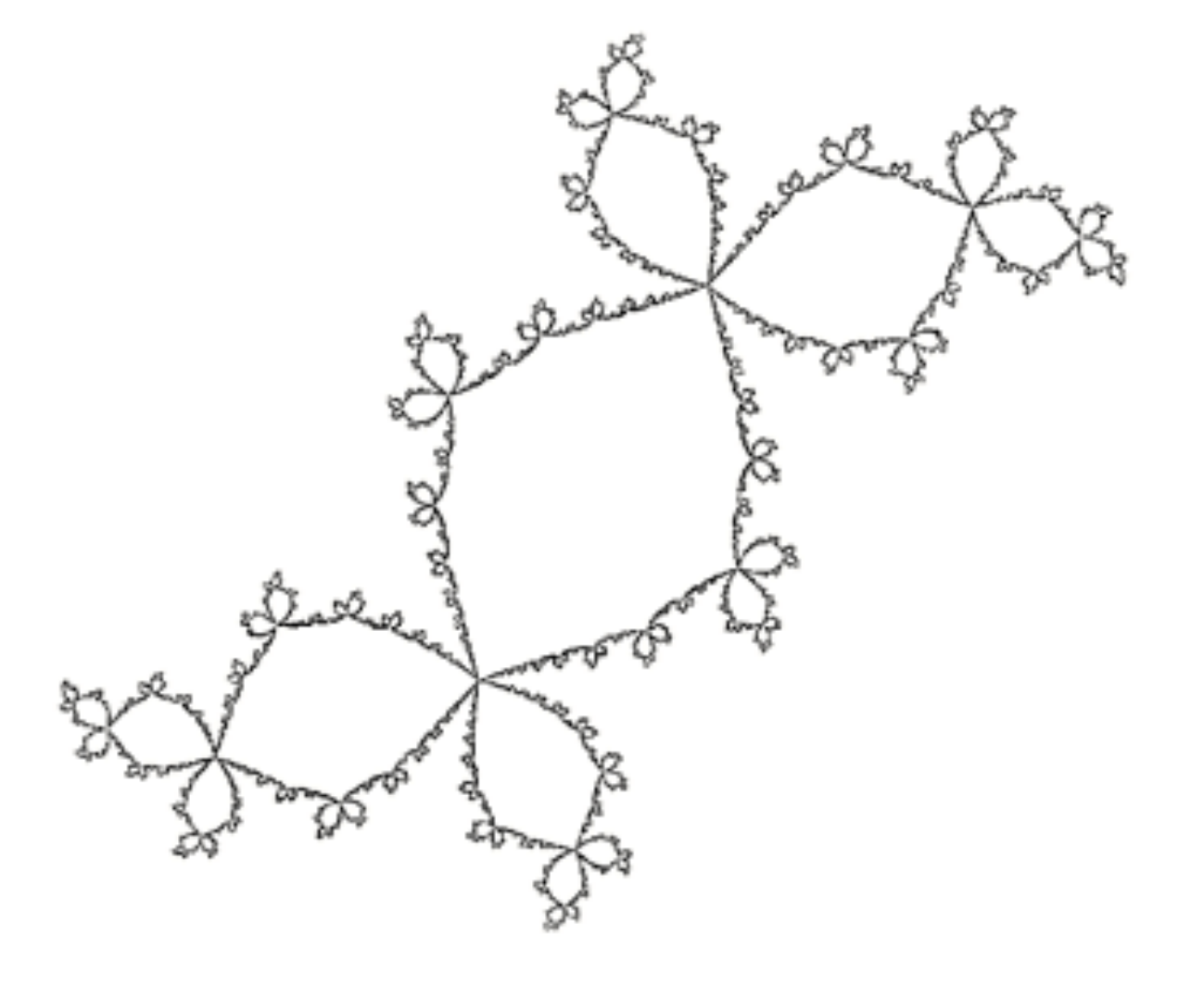}}\\
\hline
\end{tabular}
\end{center}
\vspace{-7pt}
Consider now a Dehn twist $D$ around the two nonzero post-critical points of $Q_{c_{rabbit}}$. Then for every integer $m\in\Z$, the map $D^{m}\circ Q_{c_{rabbit}}$ is again a branched covering with the same post-critical set (and same ramification portrait) as $Q_{c_{rabbit}}$. According to Thurston topological characterization of post-critically finite rational maps \cite{ThurstonProof}, the map $D^{m}\circ Q_{c_{rabbit}}$ is combinatorially equivalent to one of $Q_{c_{airplane}}$, $Q_{c_{rabbit}}$, $Q_{c_{corabbit}}$. The question asked by Hubbard \cite{ThurstonProof} is which one ?
A sketch of the solution given by Bartholdi and Nekrashevych is to compute the iterated monodromy group of $D^{m}\circ Q_{c_{rabbit}}$ and to compare it with those ones of $Q_{c_{airplane}}$, $Q_{c_{rabbit}}$ and $Q_{c_{corabbit}}$. 
\end{example}

\begin{example}[Example - Characterization of Thurston obstructions]\quad\\
Proposition \ref{PropCombinatorialEquivalence} suggests that the topological criterion from Thurston characterization of post-critically finite rational maps (see \cite{ThurstonProof}) may be algebraically reformulated in terms of the associated iterated monodromy group. Actually it was done in \cite{HurwitzLikeClassification} (see also \cite{AlgebraicThurstonEquivalence}). To illustrate this, one can easily show how to check that a certain kind of Thurston obstructions, namely Levy cycles (see \cite{LevyThesis}), does not occur by using the wreath recursions of the associated iterated monodromy group.

Let $f$ be a post-critically finite branched covering on the topological sphere $\S^{2}$ of degree $d\geqslant 2$ and denote by $P_{f}$ its post-critical set. Recall that a multi-curve is a finite set $\Gamma=\{\gamma_{1},\gamma_{2},\dots,\gamma_{m}\}$ of $m\geqslant 1$ disjoint Jordan curves in $\S^{2}\backslash P_{f}$ which are non-isotopic and non-peripheral (namely each connected component of $\S^{2}\backslash\gamma_{k}$ contains at least two points of $P_{f}$ for every $k\in\{1,2,\dots,m\}$). Also recall that a multi-curve $\Gamma=\{\gamma_{1},\gamma_{2},\dots,\gamma_{m}\}$ is called a Levy cycle if for every $k\in\{1,2,\dots,m\}$, $f^{-1}(\gamma_{k})$ has a connected component $\delta_{k-1}$ isotopic to $\gamma_{k-1}$ relatively to $P_{f}$ (with notation $\gamma_{0}=\gamma_{m}$) and the restriction $f|_{\delta_{k-1}}:\delta_{k-1}\rightarrow\gamma_{k}$ is of degree one.

Up to isotopy, all the loops in a multi-curve $\Gamma=\{\gamma_{1},\gamma_{2},\dots,\gamma_{m}\}$ may be assumed to have a common base point $t\in\S^{2}\backslash P_{f}$. Using the monodromy action, every loop in $\Gamma$ induces a tree automorphism of $T_{d}$ (for any given labeling choice) which may be uniquely written as wreath recursion as follows (see Definition \ref{DefWreathRecursion})
$$\left\{\begin{array}{rcl} \gamma_{1} & = & \sigma_{\gamma_{1}}\recursion[\gamma_{1,0},\gamma_{1,1},\dots,\gamma_{1,d-1}] \\ \gamma_{2} & = & \sigma_{\gamma_{2}}\recursion[\gamma_{2,0},\gamma_{2,1},\dots,\gamma_{2,d-1}] \\ & \dots & \\ \gamma_{m} & = & \sigma_{\gamma_{m}}\recursion[\gamma_{m,0},\gamma_{m,1},\dots,\gamma_{m,d-1}] \end{array}\right.$$
It follows from Lemma \ref{LemComputation} that if the multi-curve $\Gamma=\{\gamma_{1},\gamma_{2},\dots,\gamma_{m}\}$ is a Levy cycle then for every $k\in\{1,2,\dots,m\}$, there exists a letter $\varepsilon_{k}\in\mathcal{E}=\{0,1,\dots,d-1\}$ such that $\sigma_{\gamma_{k}}(\varepsilon_{k})=\varepsilon_{k}$ and $\gamma_{k,\varepsilon_{k}},\gamma_{k-1}$ (with notation $\gamma_{0}=\gamma_{m}$) are two tree automorphisms of $T_{d}$ induced by two loops which are isotopic relatively to $P_{f}$. Although this algebraic necessary condition is not a sufficient condition since the monodromy action is in general not faithful (two non-homotopic loops may induce the same tree automorphism in $\Aut(T_{d})$), it may be used in order to prove that a combinatorial equivalence class does not contain some Levy cycles (according to Proposition \ref{PropCombinatorialEquivalence}).
\end{example}

\subsection{Matings}\label{SecMatings}

Let $f_{1}$ and $f_{2}$ be two monic polynomials on the complex plane $\C$ of same degree $d\geqslant 2$. Let $\C_{1}$ and $\C_{2}$ be two copies of the complex plane and let $f_{1}$ and $f_{2}$ act on the corresponding copy. Compactify each copy by adding circles at infinity, that is
$$\widetilde{\C_{1}}=\C_{1}\cup\{\infty\cdot e^{2i\pi\theta}\ /\ \theta\in\R/\Z\}\quad\text{and}\quad\widetilde{\C_{2}}=\C_{2}\cup\{\infty\cdot e^{2i\pi\theta}\ /\ \theta\in\R/\Z\}$$
The actions of $f_{1}$ and $f_{2}$ are continuously extended to the action $\infty\cdot e^{2i\pi\theta}\mapsto\infty\cdot e^{2i\pi d\theta}$ on the circle at infinity. Now glue the copies $\widetilde{\C_{1}}$ and $\widetilde{\C_{2}}$ along the circle at infinity in the opposite direction, namely 
$$\C_{1}\Mating\C_{2}=\widetilde{\C_{1}}\sqcup\widetilde{\C_{2}}/(\infty\cdot e^{2i\pi\theta}\in\widetilde{\C_{1}})\sim(\infty\cdot e^{-2i\pi\theta}\in\widetilde{\C_{2}})$$
This makes $\C_{1}\Mating\C_{2}$ a topological sphere and that provides a degree $d$ branched covering $f_{1}\Mating f_{2}$ on $\C_{1}\Mating\C_{2}$ whose restrictions on the hemispheres $\C_{1}$ and $\C_{2}$ are equal to $f_{1}$ and $f_{2}$ respectively. $f_{1}\Mating f_{2}$ is called the formal mating of $f_{1}$ and $f_{2}$. Furthermore, $f_{1}$ and $f_{2}$ are said to be matable if the formal mating $f_{1}\Mating f_{2}$ is combinatorially equivalent to a rational map (see \cite{MatingQuadraticPolynomials}).

If $f_{1}$ and $f_{2}$ are post-critically finite, then $f_{1}\Mating f_{2}$ is post-critically finite as well. In that case one can consider the iterated monodromy group of partial self-covering induced by $f_{1}\Mating f_{2}$ (or that one induced by the corresponding rational map in case $f_{1}$ and $f_{2}$ are matable) and compare it with those ones induced by $f_{1}$ and $f_{2}$ respectively. The following result is an easy consequence of Definition \ref{DefIteratedMonodromyGroup} and construction of the formal mating.

\begin{prop}
The iterated monodromy group of a formal mating $f_{1}\Mating f_{2}$ is generated by two subgroups which are isomorphic to the iterated monodromy groups of $f_{1}$ and $f_{2}$ respectively.
\end{prop}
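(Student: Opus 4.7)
My plan is to exhibit the two subgroups by using the natural hemisphere decomposition of the mating sphere. Write $f = f_1 \Mating f_2$ and $\mathcal{M} = (\C_1 \Mating \C_2) \setminus P_f$. Since neither $f_1$ nor $f_2$ has finite post-critical points on the circle at infinity, the post-critical set of the mating decomposes as $P_f = (P_{f_1} \setminus \{\infty\}) \sqcup (P_{f_2} \setminus \{\infty\})$, with these subsets contained in the open hemispheres $\C_1$ and $\C_2$ respectively. I would choose the basepoint $t \in \mathcal{M}$ on the equator. Since $f$ sends each closed compactified hemisphere $\widetilde{\C_i}$ to itself, restricts to $f_i$ on $\C_i$, and acts on the equator by $\theta \mapsto d\theta$, all iterated preimages $f^{-n}(t)$ lie on the equator, and any $f^n$-lift of a loop contained in $\widetilde{\C_i}$ stays inside $\widetilde{\C_i}$.

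Next I would apply Van Kampen's theorem to the open cover $\mathcal{M} = U_1 \cup U_2$, where $U_i$ is an open neighborhood of $\widetilde{\C_i} \cap \mathcal{M}$ in $\mathcal{M}$ deformation retracting onto it, and $U_1 \cap U_2$ is a path-connected annular neighborhood of the equator. This forces $\pi_1(\mathcal{M}, t)$ to be generated by the subgroups $G_i = \pi_1(\widetilde{\C_i} \cap \mathcal{M}, t)$ for $i = 1, 2$. Denoting by $H_i \subseteq \IMG(f)$ the image of $G_i$ under the natural surjection $\pi_1(\mathcal{M}, t) \twoheadrightarrow \IMG(f)$, it follows immediately that $H_1$ and $H_2$ together generate $\IMG(f)$.

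To identify $H_i$ with $\IMG(f_i)$, I would observe that the invariance of $\widetilde{\C_i}$ ensures that the restriction of $f$ to $\widetilde{\C_i} \cap \mathcal{M}_1 \to \widetilde{\C_i} \cap \mathcal{M}$ is itself a degree-$d$ partial self-covering, and by the lifting observation above, the monodromy action of $G_i$ on $T(f, t)$ coincides with the one induced by this restricted partial self-covering. On the other hand, collapsing the boundary equator of the closed disk $\widetilde{\C_i}$ to a single point yields the Riemann sphere $\CC$ with $\infty$ in place, identifying $\widetilde{\C_i} \cap \mathcal{M}$ with $\CC \setminus P_{f_i}$ and topologically conjugating the restricted partial self-covering with the standard one $f_i : \CC \setminus f_i^{-1}(P_{f_i}) \to \CC \setminus P_{f_i}$ whose iterated monodromy group is $\IMG(f_i)$. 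The induced isomorphism $G_i \cong \pi_1(\CC \setminus P_{f_i}, t_i)$ (for a basepoint $t_i$ near $\infty$) intertwines the two monodromy actions via the B{\"o}ttcher coordinate at $\infty$, in which $f_i$ is conjugate to $z \mapsto z^d$ and iterated preimages of $t$ on the equator correspond (via angle-multiplication) to iterated preimages of $t_i$ near $\infty$. Hence $G_i \cap \Ker(\Phi_f)$ corresponds to $\Ker(\Phi_{f_i})$, giving $H_i \cong \IMG(f_i)$.

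The delicate step is the last identification, where one must carefully verify that the B{\"o}ttcher bridge between equator preimages (under angle-multiplication) and preimages near $\infty$ (under $f_i$) realises a tree isomorphism that intertwines both monodromy actions. Once this is made precise, the remainder of the argument is bookkeeping between Van Kampen and the kernel defining the iterated monodromy group.
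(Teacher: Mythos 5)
The paper does not actually supply a proof of this proposition (it is dismissed as ``an easy consequence of Definition \ref{DefIteratedMonodromyGroup} and construction of the formal mating''), so there is no official argument to compare yours against; your hemisphere decomposition is certainly the intended one. The first two thirds of your proposal are correct and contain the real content: with $t$ on the equator, every iterated preimage of $t$ lies on the equator because $f^{-1}(\text{equator})=\text{equator}$ and $f^{-1}(\widetilde{\C_{i}})=\widetilde{\C_{i}}$; hence lifts of loops contained in $\widetilde{\C_{i}}$ stay in $\widetilde{\C_{i}}$, the restriction $f:\widetilde{\C_{i}}\cap\mathcal{M}_{1}\to\widetilde{\C_{i}}\cap\mathcal{M}$ is again a degree $d$ partial self-covering, and van Kampen (the intersection of the two pieces being a connected annulus containing $t$) shows that the images $H_{1},H_{2}$ of the two hemisphere fundamental groups generate $\IMG(f_{1}\Mating f_{2})$, with $H_{i}$ equal to the iterated monodromy group of that restricted partial self-covering.

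The step that does not work as written is your identification of this restricted group with $\IMG(f_{i})$. Collapsing the equator of $\widetilde{\C_{i}}$ to a point is not injective, so it is not a topological conjugacy; worse, the resulting map $f_{i}:\CC\rightarrow\CC$ has a critical point of local degree $d$ at $\infty$, so $f_{i}:\CC\setminus f_{i}^{-1}(P_{f_{i}})\rightarrow\CC\setminus P_{f_{i}}$ is not a covering there (and in the paper's convention the polynomial's partial self-covering lives on $\C\setminus P_{f_{i}}$, which does not contain $\infty$, so the collapsed basepoint would not even lie in the base). The B\"ottcher ``bridge'' you then invoke is precisely the statement to be proved and is never established. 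Fortunately none of this machinery is needed. Since $\IMG$ is independent of the basepoint up to isomorphism, move the basepoint to a point $t_{i}$ of the open hemisphere $\C_{i}\cap\mathcal{M}$. Because $f^{-1}(\C_{i})=\C_{i}$, all iterated preimages of $t_{i}$ lie in $\C_{i}$ and the restricted partial self-covering over $\C_{i}\cap\mathcal{M}=\C\setminus P_{f_{i}}$ is \emph{literally} the standard one for the polynomial $f_{i}$; moreover the inclusion $\C_{i}\cap\mathcal{M}\hookrightarrow\widetilde{\C_{i}}\cap\mathcal{M}$ is a homotopy equivalence (push the boundary circle inwards), so every loop based at $t_{i}$ is homotopic into the open hemisphere, where its lifts are $f_{i}$-lifts. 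Hence the two monodromy actions on the common tree of preimages coincide, the kernels agree, and $H_{i}\cong\IMG(f_{i},t_{i})$ with no analytic input.
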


Remark this provides a sufficient condition to prove that a rational map is not combinatorially equivalent to a formal mating (according to Proposition \ref{PropCombinatorialEquivalence}). Unfortunately this criterion (the iterated monodromy group of a given post-critically finite rational map of degree $d\geqslant 2$ is not generated by some pair of iterated monodromy groups of degree $d$ monic polynomials) is too hard to check.

The following result is more useful with this aim in view.

\begin{prop}\label{PropAddingMachineMating}
The iterated monodromy group of a formal mating $f_{1}\Mating f_{2}$ of degree $d\geqslant 2$ contains a tree automorphism which acts as a cyclic permutation of order $d^{n}$ on the $n$-th level for every $n\geqslant 1$.
\end{prop}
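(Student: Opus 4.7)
The plan is to exhibit an explicit generator of $\pi_1(\S^2\backslash P_{f_1\Mating f_2},t)$ whose monodromy action is the desired ``adding machine''. The natural candidate is a loop that goes once around the equator $E=\{\infty\cdot e^{2i\pi\theta}\ /\ \theta\in\R/\Z\}$ along which $\widetilde{\C_1}$ and $\widetilde{\C_2}$ have been glued.

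First I would observe that, because $f_1$ and $f_2$ are monic of degree $d$, their continuous extensions to the circles at infinity are both given by $\infty\cdot e^{2i\pi\theta}\mapsto\infty\cdot e^{2i\pi d\theta}$, and the gluing identification $(\infty\cdot e^{2i\pi\theta}\in\widetilde{\C_1})\sim(\infty\cdot e^{-2i\pi\theta}\in\widetilde{\C_2})$ is compatible with these extensions. Consequently $f_1\Mating f_2$ sends $E$ into $E$ and its restriction to $E$ is the unramified degree $d$ covering $e^{2i\pi\theta}\mapsto e^{2i\pi d\theta}$ (in particular no point of $E$ is a critical point of $f_1\Mating f_2$). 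Since the post-critical sets of $f_1$ and $f_2$ are contained in the bounded filled Julia sets of these polynomials, $P_{f_1\Mating f_2}$ is contained in $\C_1\sqcup\C_2$ and therefore disjoint from $E$.

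Next I would choose a base point $t\in E$ and take $\gamma$ to be the loop going once around $E$ in the counterclockwise direction (as seen from $\widetilde{\C_1}$), which is well defined in $\pi_1(\S^2\backslash P_{f_1\Mating f_2},t)$. Parametrizing $E$ by $\theta\in\R/\Z$ and writing $t=\theta_0$, the preimages of $t$ under $(f_1\Mating f_2)^n|_E$ are the $d^n$ points $\{(\theta_0+k)/d^n\ /\ k=0,1,\dots,d^n-1\}$, which lie on $E$. Because $(f_1\Mating f_2)^n$ has degree $d^n$ globally and already contains these $d^n$ preimages of $t$ inside $E$, this is the complete set of level-$n$ vertices of the tree of preimages $T(f_1\Mating f_2,t)$. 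The $(f_1\Mating f_2)^n$-lift of $\gamma$ from $(\theta_0+k)/d^n$ is simply the arc $s\mapsto(\theta_0+k+s)/d^n$, ending at $(\theta_0+k+1)/d^n$. Hence the monodromy action of $[\gamma]$ permutes the level-$n$ vertices cyclically of order $d^n$, which produces the required tree automorphism in $\IMG(f_1\Mating f_2,t)$.

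I expect the main obstacle to be the first step: carefully justifying that the formal mating really does act as the degree $d$ angle-multiplication map on the equator (this is where monicity of $f_1,f_2$ is essential) and that the equator avoids $P_{f_1\Mating f_2}$. Once this topological description of $f_1\Mating f_2$ on $E$ is in hand, the monodromy computation is routine: all iterated preimages of $t$ stay on $E$, and the lifts of $\gamma$ are arcs on $E$ that realize the cyclic shift, making $[\gamma]$ act as the adding machine on $T_d$ under any labeling choice.
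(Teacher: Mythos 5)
Your proposal is correct and takes essentially the same route as the paper: both choose the base point on the circle at infinity, take the loop $\gamma$ going once around that circle, observe that all iterated preimages of $t$ and all lifts of $\gamma$ remain on the equator, and conclude that $[\gamma]$ acts as a cyclic permutation of order $d^{n}$ on each level $n$. The only cosmetic difference is that the paper packages this computation via Lemma \ref{LemComputation} into the wreath recursion $\gamma=\sigma\recursion[\Id,\dots,\Id,\gamma]$ of the adding machine, while you parametrize the $d^{n}$ level-$n$ preimages on the equator explicitly and lift $\gamma$ directly; both give the same conclusion.
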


In order to prove that a rational map $f$ is not combinatorially equivalent to a formal mating, that provides finitely many conditions to check for every level $n\geqslant 1$. Indeed one only need to show that the finite subgroup of permutations generated by the monodromy actions on the $n$-th level induced by the finitely many generators of $\pi_{1}(\CC\backslash P_{f},t)$ (for any base point $t\in\CC\backslash P_{f}$) does not contain a cyclic permutation. Unfortunately this criterion is not an equivalence as it will be shown in the next example.

\begin{proof}
Without loss of generality, choose the base point $t$ to be the fixed point $\infty.e^{2i\pi 0}$ on the circle at infinity of the copies $\widetilde{\C_{1}}$ and $\widetilde{\C_{2}}$. Consider the loop $\gamma$ which describes the circle at infinity (in one turn from $t$ to $t$). Remark that the $(f_{1}\Mating f_{2})$-lifts of $\gamma$ are the arcs of circle at infinity between two consecutive preimages of $t$. For $\varepsilon$ describing the alphabet $\mathcal{E}=\{0,1,\dots,d-1\}$, denote successively by $x_{\varepsilon}$ these consecutive preimages and by $\ell_{\varepsilon}$ the arcs of circle at infinity between $t$ and $x_{\varepsilon}$ (going along the circle at infinity in the same direction as $\gamma$). By using Lemma \ref{LemComputation}, one can deduce the monodromy action of $[\gamma]$
$$\forall w\in \mathcal{E}^{\star},\quad\left\{\begin{array}{l} [\gamma]0w=1w \\ {[\gamma]}1w=2w \\ \dots \\ {[\gamma]}(d-2)w=(d-1)w \\ {[\gamma]}(d-1)w=0([\gamma]w) \end{array}\right.$$
Consequently, the iterated monodromy group $\IMG(f,t)$ seen as a subgroup of $\Aut(T_{d})$ contains the wreath recursion $\gamma=\sigma\recursion[\Id,\Id,\dots,\Id,\gamma]$ where $\sigma=(0,1,\dots,d-1)$ (using circular notation) which defines the adding machine. In particular, $[\gamma]$ acts as a cyclic permutation of order $d^{n}$ on the $n$-th level for every $n\geqslant 1$.
\end{proof}

\vspace{-10pt}

\begin{example}[Example - The non-mating Wittner example (due to Milnor and Tan Lei \cite{RemarksQuadraticRationalMaps})]\quad\\
Consider the following quadratic rational map
\vspace{-5pt}
$$W:z\mapsto\lambda\left(z+\dfrac{1}{z}\right)+\mu$$
where the parameters $\lambda,\mu\in\C$ are chosen in order that the critical point $c_{0}=1$ is on a periodic orbit of period 4 and the critical point $c'_{0}=-1$ is on a periodic orbit of period 3. Such parameters are actually unique and computation gives $\lambda\approx-0.138$ and $\mu\approx-0.303$. The following ramification portrait depicts the pattern of the critical orbits along the real line $\R$.
\vspace{-5pt}
$$\xymatrix{ \ar@{-}[rr] && c_{3} \ar@{-}[r] \ar@/_{4pc}/[rrrrrr]_{1:1} & c'_{0} \ar@{-}[r] \ar@/_{2pc}/[rr]_{2:1} & c_{1} \ar@{-}[r] \ar@/_{2pc}/[rrr]_(0.33){1:1} & c'_{1} \ar@{-}[r] \ar@/_{3pc}/[rrrr]_(0.66){1:1} & 0 \ar@{-}[r] & c_{2} \ar@{-}[r] \ar@/_{3pc}/[lllll]_(0.66){1:1} & c_{0} \ar@{-}[r] \ar@/_{2pc}/[llll]_(0.33){2:1} & c'_{2} \ar@{->}[rrr]^(0.75){\textstyle \R} \ar@/_{4pc}/[llllll]_{1:1} &&& }$$

Wittner proved that $W$ is not combinatorially equivalent to a formal mating (see \cite{RemarksQuadraticRationalMaps}). Proposition \ref{PropAddingMachineMating} suggests a different proof of this result by using iterated monodromy groups. Unfortunately one will show that there exists a tree automorphism in the iterated monodromy group of $W$ which acts as a cyclic permutation on every level although $W$ is not combinatorially equivalent to a formal mating. However one will see that iterated monodromy groups provide an efficient way to prove that $W^{5}$ is combinatorially equivalent to a formal mating.

For convenience, conjugate $W$ by the Möbius map $z\mapsto\frac{z+i}{iz+1}$ which interchanges the extended real line $\R\cup\{\infty\}$ and the unit circle $\S^{1}$ (keeping the critical points $1$ and $-1$ fixed). Abusing notation, the resulting map is still denoted by $W$ and the post-critical points, which belong to the unit circle $\S^{1}$, are still denoted by $c_{k}=W^{k}(1)$ and $c'_{k}=W^{k}(-1)$ for every integer $k$. Choose the fixed point $t=-i$ as base point (which corresponds to $\infty$ in the first model).

The fundamental group $\pi_{1}(\CC\backslash P_{W},t)$ may be described as the free group generated by six homotopy classes among $[a_{0}],[a_{1}],[a_{2}],[a_{3}]$ and $[b_{0}],[b_{1}],[b_{2}]$ where every loop $a_{k}$ (respectively $b_{k}$) surrounds the post-critical point $c_{k}$ (respectively $c'_{k}$) in a counterclockwise motion (see Figure \ref{FigWittner1}). In fact, these seven loops together are linked by a circular relation of the form $[b_{2}.a_{0}.a_{2}.b_{1}.a_{1}.b_{0}.a_{3}]=[1_{t}]$ where $[1_{t}]$ is the homotopy class of the constant loop at base point $t$ (that is the identity element of the fundamental group $\pi_{1}(\CC\backslash P_{W},t)$).

Let $x_{0}=-i,x_{1}=i$ be the preimages of $t=-i$. Choose $\ell_{0}$ to be the constant path at base point $t=x_{0}$ and $\ell_{1}$ to be the straight path from $t$ to $x_{1}$. Figure \ref{FigWittner2} depicts the $W$-lift of Figure $\ref{FigWittner1}$.

\newpage

\begin{figure}[!h]
\begin{center}
\includegraphics[height=6.5cm]{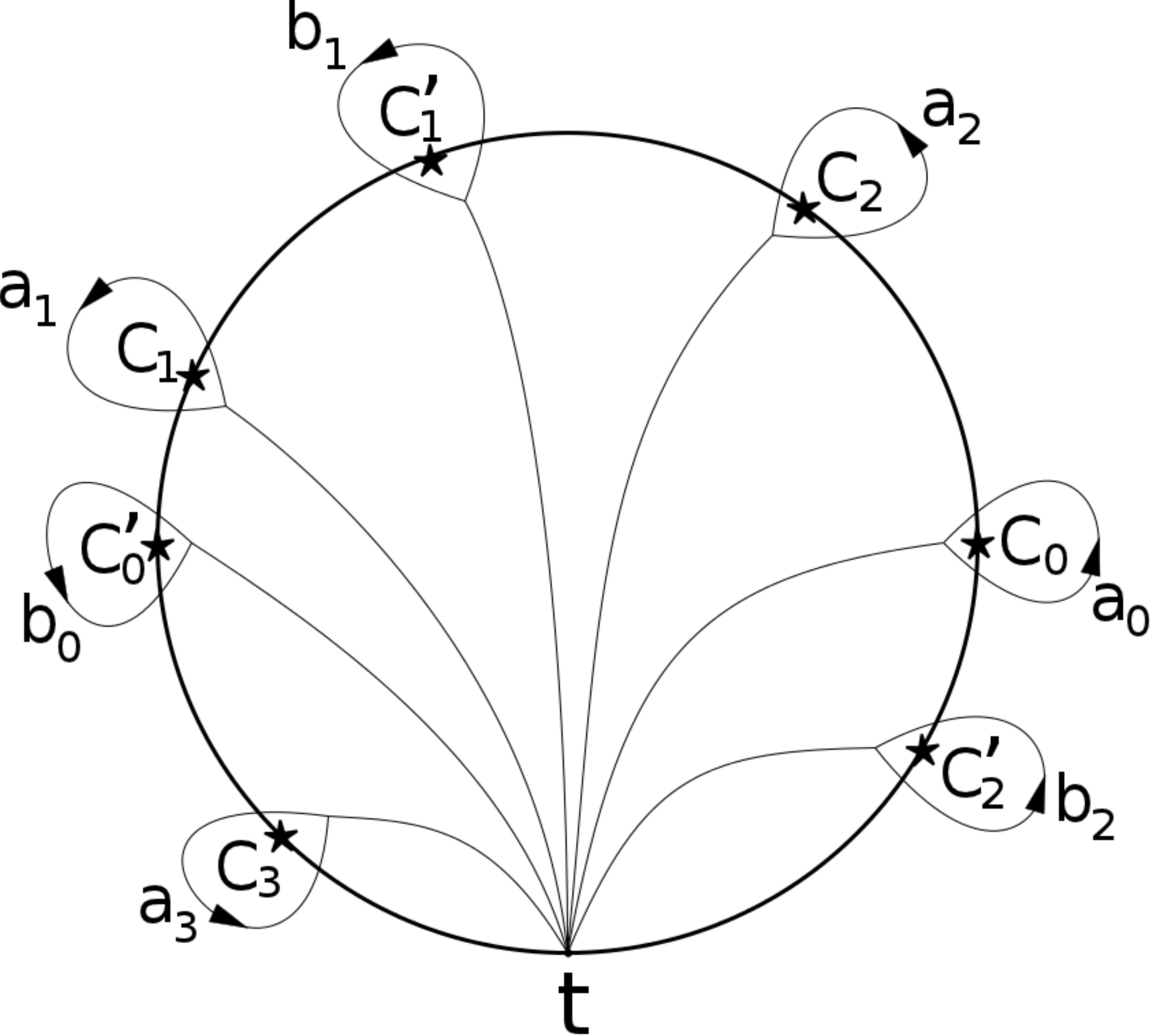}
\caption{Seven generators of $\pi_{1}(\CC\backslash P_{W},t)$}\label{FigWittner1}
\end{center}
\end{figure}

\begin{figure}[!h]
\begin{center}
\includegraphics[height=6.5cm]{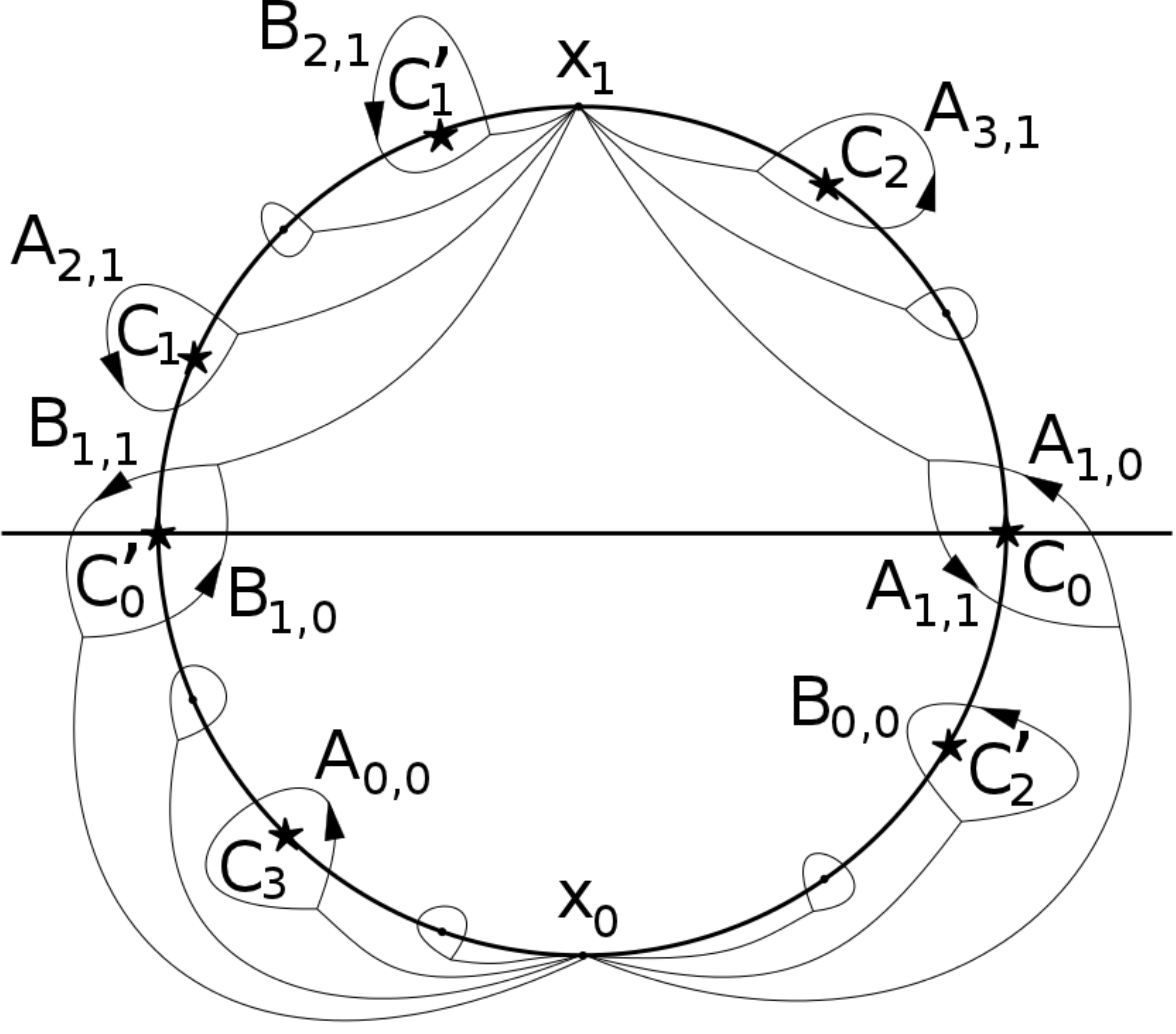}
\caption{The $W$-lift of Figure $\ref{FigWittner1}$}\label{FigWittner2}
\end{center}
\end{figure}

It follows from Lemma \ref{LemComputation} that the iterated monodromy group $\IMG(W,t)$ seen as a subgroup of $\Aut(T_{2})$ is generated by the following wreath recursions

\begin{center}
\begin{tabular}{|c|c|c|c|}
\hline 
&&& \\
$a_{0}=\recursion[a_{3},\Id]$ & $a_{1}=(0,1)\recursion[b_{2}.a_{0},b_{2}^{-1}]$ & $a_{2}=
\recursion[\Id,a_{1}]$ & $a_{3}=\recursion[\Id,a_{2}]$\\ 
&&& \\
\hline
$\xymatrix @!0 @R=2pc @C=5pc { 0 \ar[r]^{\textstyle a_{3}} & 0 \\ 1 \ar[r] & 1 }$ & $\xymatrix @!0 @R=2pc @C=5pc { 0 \ar[rd]^(0.25){\textstyle b_{2}.a_{0}} & 0 \\ 1 \ar[ru]_(0.25){\textstyle b_{2}^{-1}} & 1 }$ & $\xymatrix @!0 @R=2pc @C=5pc { 0 \ar[r] & 0 \\ 1 \ar[r]^{\textstyle a_{1}} & 1 }$ & $\xymatrix @!0 @R=2pc @C=5pc { 0 \ar[r] & 0 \\ 1 \ar[r]^{\textstyle a_{2}} & 1 }$ \\
\hline
\end{tabular}
\begin{tabular}{|c|c|c|}
\hline 
&& \\
$b_{0}=\recursion[b_{2},\Id]$ & $b_{1}=(0,1)\recursion[a_{3}^{-1},b_{0}.a_{3}]$ & $b_{2}=\recursion[\Id,b_{1}]$ \\ 
&& \\
\hline
$\xymatrix @!0 @R=2pc @C=5pc { 0 \ar[r]^{\textstyle b_{2}} & 0 \\ 1 \ar[r] & 1 }$ & $\xymatrix @!0 @R=2pc @C=5pc { 0 \ar[rd]^(0.25){\textstyle a_{3}^{-1}} & 0 \\ 1 \ar[ru]_(0.25){\textstyle b_{0}.a_{3}} & 1 }$ & $\xymatrix @!0 @R=2pc @C=5pc { 0 \ar[r] & 0 \\ 1 \ar[r]^{\textstyle b_{1}} & 1 }$ \\
\hline
\end{tabular}
\end{center}

\newpage

Recall that every homotopy class in $\pi_{1}(\CC\backslash P_{W},t)$ may be uniquely written as $[\gamma_{1}^{\nu_{1}}.\gamma_{2}^{\nu_{2}}.\dots.\gamma_{m}^{\nu_{m}}]$ where the loops $\gamma_{j}$ are chosen among $a_{0},a_{1},a_{2},b_{0},b_{1},b_{2}$ (disregarding $a_{3}$) and the exponents $\nu_{j}$ belong to $\Z$. Notice that for every $k\in\{0,1,2\}$, the following maps
$$\alpha_{k}:[\gamma_{1}^{\nu_{1}}.\gamma_{2}^{\nu_{2}}.\dots.\gamma_{m}^{\nu_{m}}]\longmapsto\sum_{\gamma_{j}=a_{k}}\nu_{j}\quad\text{and}\quad \beta_{k}:[\gamma_{1}^{\nu_{1}}.\gamma_{2}^{\nu_{2}}.\dots.\gamma_{m}^{\nu_{m}}]\longmapsto\sum_{\gamma_{j}=b_{k}}\nu_{j}$$
are well defined on $\pi_{1}(\CC\backslash P_{W},t)$ (the cartesian product of maps $\alpha_{0}\times\alpha_{1}\times\alpha_{2}\times\beta_{0}\times\beta_{1}\times\beta_{2}$ is actually a projection onto the abelianization of $\pi_{1}(\CC\backslash P_{W},t)$, namely $\Z^{6}$).

\begin{description}
\item[Claim:] Let $[\gamma]\in\pi_{1}(\CC\backslash P_{W},t)$ be a homotopy class such that $\alpha_{k}([\gamma])=0$ and $\beta_{k}([\gamma])=1$ for every $k\in\{0,1,2\}$. Then the tree automorphism induced by the monodromy action of $[\gamma]$ acts as a cyclic permutation of order $2^{n}$ on the $n$-th level for every $n\geqslant 1$.
\end{description}
For instance, the wreath recursion $b_{2}.b_{1}.b_{0}=(0,1)\recursion[a_{3}^{-1},b_{1}.b_{0}.a_{3}.b_{2}]$ satisfies the conclusion of Proposition \ref{PropAddingMachineMating} although $W$ is not combinatorially equivalent to a formal mating.
\begin{proof}[of the Claim]
At first, remark that the result holds on the first level for every tree automorphism $\gamma\in\Aut(T_{2})$ coming from a homotopy class $[\gamma]=[\gamma_{1}^{\nu_{1}}.\gamma_{2}^{\nu_{2}}.\dots.\gamma_{m}^{\nu_{m}}]$ which satisfies the assumptions. Indeed, since $\Sym(\mathcal{E})=\{\Id,(0,1)\}$ is an abelian group, the root permutation $\gamma|_{\mathcal{E}^{1}}\in\Sym(\mathcal{E})$ gives
$$\gamma|_{\mathcal{E}^{1}}=\gamma_{m}^{\alpha_{m}}|_{\mathcal{E}^{1}}\circ\dots\circ\gamma_{2}^{\alpha_{2}}|_{\mathcal{E}^{1}}\circ\gamma_{1}^{\alpha_{1}}|_{\mathcal{E}^{1}}=b_{0}|_{\mathcal{E}^{1}}\circ b_{1}|_{\mathcal{E}^{1}}\circ b_{2}|_{\mathcal{E}^{1}}=\Id\circ(0,1)\circ\Id=(0,1)$$

Let $n\geqslant 2$ be an integer and assume by induction that the result holds on the $(n-1)$-th level for every tree automorphism coming from a homotopy class which satisfies the assumptions. Let $\gamma\in\Aut(T_{2})$ be a tree automorphism coming from a homotopy class $[\gamma]$ which satisfies the assumptions. Denote by $\gamma_{0}$ and $\gamma_{1}$ the renormalizations of $\gamma$ at 0 and 1 induced by some homotopy classes $[\gamma_{0}]$ and $[\gamma_{1}]$ (see Lemma \ref{LemComputation}), in order that the wreath recursion of $\gamma$ is given by $\gamma=(0,1)\recursion[\gamma_{0},\gamma_{1}]$. Lemma \ref{LemComputationsWreathRecursion} gives
\begin{center}
\begin{tabular}{ccccccc}
$\gamma^{2}$ & = & \parbox[c]{130pt}{$\xymatrix @!0 @R=2pc @C=5pc { 0 \ar[rd]^(0.25){\textstyle \gamma_{0}} & 0 \ar[rd]^(0.25){\textstyle \gamma_{0}} & 0 \\ 1 \ar[ru]_(0.25){\textstyle \gamma_{1}} & 1 \ar[ru]_(0.25){\textstyle \gamma_{1}} & 1 }$} & = & \parbox[c]{130pt}{$\xymatrix @!0 @R=2pc @C=10pc { 0 \ar[r]^{\textstyle \gamma_{0}.\gamma_{1}} & 0  \\ 1 \ar[r]^{\textstyle \gamma_{1}.\gamma_{0}} & 1 }$} & = & $\recursion[\gamma_{0}.\gamma_{1},\gamma_{1}.\gamma_{0}]$\\ 
\end{tabular}
\end{center}
Furthermore, it follows from the wreath recursions of the generators $a_{0},a_{1},a_{2},b_{0},b_{1},b_{2}$ and from the relation $a_{3}^{-1}=b_{2}.a_{0}.a_{2}.b_{1}.a_{1}.b_{0}$ that
$$\quad\left\{\begin{array}{l} \alpha_{0}([\gamma_{0}])+\alpha_{0}([\gamma_{1}])=-\alpha_{0}([\gamma])+\alpha_{1}([\gamma])+\beta_{1}([\gamma])-\beta_{1}([\gamma])=0 \\ \alpha_{1}([\gamma_{0}])+\alpha_{1}([\gamma_{1}])=-\alpha_{0}([\gamma])+\alpha_{2}([\gamma])+\beta_{1}([\gamma])-\beta_{1}([\gamma])=0 \\ \alpha_{2}([\gamma_{0}])+\alpha_{2}([\gamma_{1}])=-\alpha_{0}([\gamma])+\beta_{1}([\gamma])-\beta_{1}([\gamma])=0 \\ \beta_{0}([\gamma_{0}])+\beta_{0}([\gamma_{1}])=-\alpha_{0}([\gamma])+\beta_{1}([\gamma])-\beta_{1}([\gamma])+\beta_{1}([\gamma])=1 \\ \beta_{1}([\gamma_{0}])+\beta_{1}([\gamma_{1}])=-\alpha_{0}([\gamma])+\beta_{1}([\gamma])-\beta_{1}([\gamma])+\beta_{2}([\gamma])=1 \\ \beta_{2}([\gamma_{0}])+\beta_{2}([\gamma_{1}])=-\alpha_{0}([\gamma])+\alpha_{1}([\gamma])+\beta_{0}([\gamma])+\beta_{1}([\gamma])-\beta_{1}([\gamma])=1 \end{array}\right.$$
Hence
$$\forall k\in\{0,1,2\},\quad\left\{\begin{array}{l} \alpha_{k}([\gamma_{0}.\gamma_{1}])=\alpha_{k}([\gamma_{1}.\gamma_{0}])=\alpha_{k}([\gamma_{0}])+\alpha_{k}([\gamma_{1}])=0 \\ \beta_{k}([\gamma_{0}.\gamma_{1}])=\beta_{k}([\gamma_{1}.\gamma_{0}])=\beta_{k}([\gamma_{0}])+\beta_{k}([\gamma_{1}])=1 \end{array}\right.$$
Therefore the renormalizations $\gamma_{0}.\gamma_{1}$ and $\gamma_{1}.\gamma_{0}$ of $\gamma^{2}$ satisfy the assumptions, and thus (from the inductive hypothesis) they act as cyclic permutations of order $2^{n-1}$ on the $(n-1)$-th level. Consequently $\gamma$ acts as a cyclic permutation of order $2^{n}$ on the $n$-th level and the result follows by induction.
\end{proof}

However the iterated monodromy group of $W$ may be used to prove that $W^{5}$ is combinatorially equivalent to a formal mating. The key is that wreath recursions provide an efficient way to compute the $W^{5}$-lifts of any loop (up to isotopy of loops). For instance, consider the wreath recursion $b_{2}.b_{1}.b_{0}=(0,1)\recursion[a_{3}^{-1},b_{1}.b_{0}.a_{3}.b_{2}]$ induced by the loop $b_{2}.b_{1}.b_{0}$. From Lemma \ref{LemComputation}, this loop has only one $W$-lift which is isotopic to $a_{3}^{-1}.b_{1}.b_{0}.a_{3}.b_{2}$ relatively to $P_{W}$. This $W$-lift induces the following wreath recursion (according to Lemma \ref{LemComputationsWreathRecursion})
\begin{center}
\begin{tabular}{ccl}
$a_{3}^{-1}.b_{1}.b_{0}.a_{3}.b_{2}$ & = & \parbox[c]{350pt}{$\xymatrix @!0 @R=2pc @C=5pc { 0 \ar[r] & 0 \ar[rd]^(0.25){\textstyle a_{3}^{-1}} & 0 \ar[r]^{\textstyle b_{2}} & 0 \ar[r] & 0 \ar[r] & 0 \\ 1 \ar[r]^{\textstyle a_{2}^{-1}} & 1 \ar[ru]_(0.25){\textstyle b_{0}.a_{3}} & 1 \ar[r] & 1 \ar[r]^{\textstyle a_{2}} & 1 \ar[r]^{\textstyle b_{1}} & 1 }$} \\
&& \\
& = & \parbox[c]{350pt}{$\xymatrix @!0 @R=2pc @C=25pc { 0 \ar[rd]^(0.25){\textstyle a_{3}^{-1}.a_{2}.b_{1}} & 0 \\ 1 \ar[ru]_(0.25){\textstyle a_{2}^{-1}.b_{0}.a_{3}.b_{2}} & 1 }$} \\
&& \\
& = & $(0,1)\recursion[a_{3}^{-1}.a_{2}.b_{1},a_{2}^{-1}.b_{0}.a_{3}.b_{2}]$
\end{tabular}
\end{center}
Therefore it follows from Lemma \ref{LemComputation} that the loop $a_{3}^{-1}.b_{1}.b_{0}.a_{3}.b_{2}$ has only one $W$-lift which is isotopic to $a_{3}^{-1}.a_{2}.b_{1}.a_{2}^{-1}.b_{0}.a_{3}.b_{2}$ relatively to $P_{W}$. Equivalently the loop $b_{2}.b_{1}.b_{0}$ has only one $W^{2}$-lift which is isotopic to $a_{3}^{-1}.a_{2}.b_{1}.a_{2}^{-1}.b_{0}.a_{3}.b_{2}$ relatively to $P_{W}$. Repeating this process gives after computations (using the circular relation $b_{2}.a_{0}.a_{2}.b_{1}.a_{1}.b_{0}.a_{3}=\Id$)
\begin{eqnarray*}
a_{3}^{-1}.a_{2}.b_{1}.a_{2}^{-1}.b_{0}.a_{3}.b_{2} & = & (0,1)\recursion[a_{3}^{-1}.a_{1}^{-1}.a_{2}.b_{1},a_{2}^{-1}.a_{1}.b_{0}.a_{3}.b_{2}] \\
a_{3}^{-1}.a_{1}^{-1}.a_{2}.b_{1}.a_{2}^{-1}.a_{1}.b_{0}.a_{3}.b_{2} & = & (0,1)\recursion[b_{2}.a_{1}.b_{0}.a_{3}.b_{2}.a_{0}.a_{2}.b_{1},a_{2}^{-1}.a_{0}^{-1}.b_{2}^{-1}.a_{3}^{-1}.a_{1}^{-1}.b_{2}^{-1}.b_{2}] \\
& = & (0,1)\recursion[b_{2},a_{2}^{-1}.a_{0}^{-1}.b_{2}^{-1}.a_{3}^{-1}.a_{1}^{-1}] \\
b_{2}.a_{2}^{-1}.a_{0}^{-1}.b_{2}^{-1}.a_{3}^{-1}.a_{1}^{-1} & = & (0,1)\recursion[a_{3}^{-1}.b_{2},b_{1}.a_{1}^{-1}.b_{1}^{-1}.a_{2}^{-1}.a_{0}^{-1}.b_{2}^{-1}] \\
& = & (0,1)\recursion[a_{3}^{-1}.b_{2},b_{1}.b_{0}.a_{3}]
\end{eqnarray*}
Finally the loop $b_{2}.b_{1}.b_{0}$ has only one $W^{5}$-lift which is isotopic to $a_{3}^{-1}.b_{2}.b_{1}.b_{0}.a_{3}$ relatively to $P_{W}$ (see Figure \ref{FigLoopWittner}). Remark that these two loops are actually orientation-preserving isotopic to a same Jordan curve relatively to $P_{W}=P_{W^{5}}$. It is known (see for instance \cite{UnmatingRationalMaps}) that the existence of such a Jordan curve, called an equator, is a sufficient condition to prove that $W^{5}$ is combinatorially equivalent to a formal mating.

\begin{figure}[!h]
\begin{center}
\begin{tabular}{|c|c|}
\hline
\parbox[c]{220pt}{\vspace{5pt}\includegraphics[width=220pt]{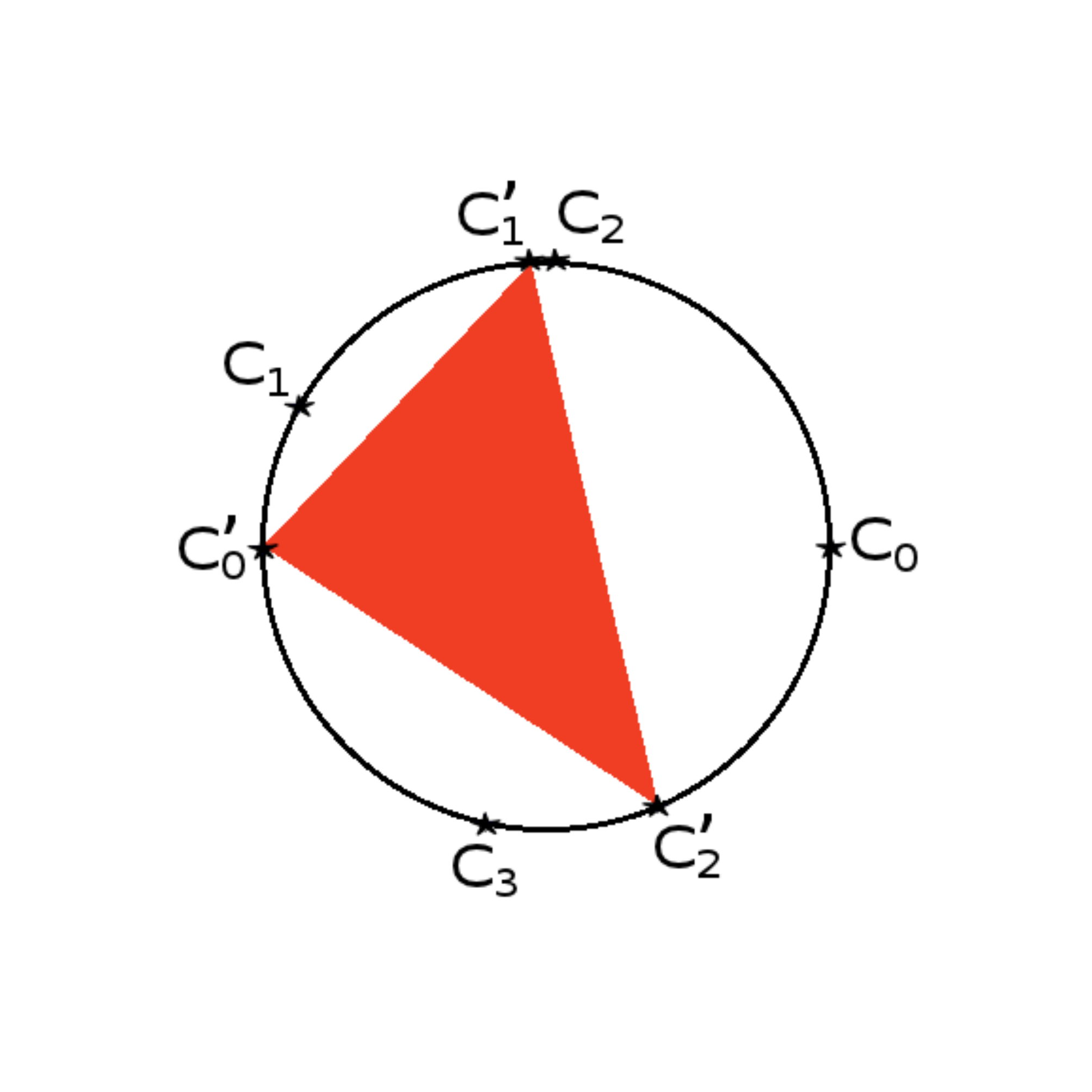}} & \parbox[c]{220pt}{\includegraphics[width=220pt]{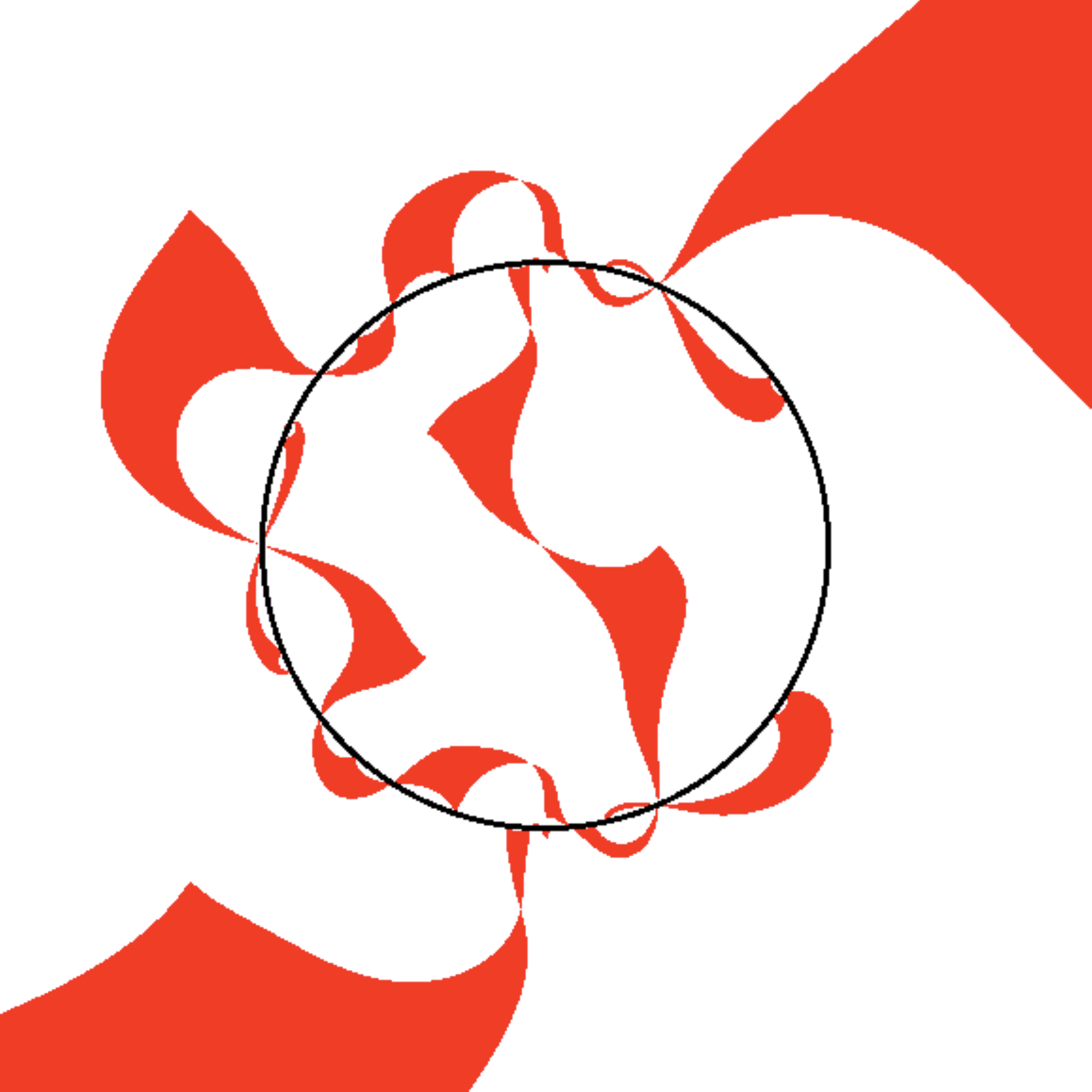}} \\
\hline
\end{tabular}
\caption{The loop $b_{2}.b_{1}.b_{0}$ and its $W^{5}$-lift}\label{FigLoopWittner}
\end{center}
\end{figure}
\end{example}

\newpage

\addcontentsline{toc}{section}{References}
\bibliographystyle{alpha}
\bibliography{biblio}

\vspace{1cm}

\end{document}